\numberwithin{equation}{section}
\numberwithin{figure}{section}
\theoremstyle{plain}
\newtheorem{thm}{Theorem}
\newtheorem{lem}[thm]{Lemma}
\newtheorem{prop}[thm]{Proposition}
\newtheorem{cor}[thm]{Corollary}
\theoremstyle{definition}
\newtheorem*{defn*}{Definition}
\newtheorem*{example*}{Example}
\newcommand{\orcid}[1]{\href{https://orcid.org/#1}{\includegraphics[width=10pt]{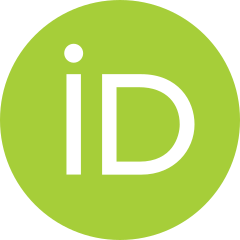}  orcid.org/#1}}
\title{\vspace{-0.5cm}\LARGE \textsc{\scalebox{0.92}[1.0]{Applications of Structural Statistics:}\\\scalebox{0.92}[1.0]{Geometric Inference in Exponential Families}}\vspace{-0.5cm}}
\author{\large P. Michl \orcid{0000-0002-6398-0654}}
\date{}
\begin{document}

\twocolumn[ 
\begin{@twocolumnfalse}  

\maketitle 

\begin{abstract}
\vspace{-1.9cm}
\begin{adjustwidth}{10mm}{10mm}
Exponential families comprise a broad class of statistical models and parametric families like normal distributions, binomial distributions, gamma distributions or exponential distributions. Thereby the formal representation of its probability distributions induces a confined intrinsic structure, which appears to be that of a dually flat statistical manifold. Conversely it can be shown, that any dually flat statistical manifold, which is given by a regular Bregman divergence uniquely induced a regular exponential family, such that exponential families may - with some restrictions - be regarded as a universal representation of dually flat statistical manifolds. This article reviews the pioneering work of Shun'ichi Amari about the intrinsic structure of exponential families in terms of structural stratistics.
\\\\
\textbf {Keywords:} Exponential Family, Statistical Manifold, Geometric Inference
\end{adjustwidth}
\vspace{0.5cm}
\end{abstract}
\end{@twocolumnfalse}
] 

\section{Introduction}

In accordance to it's historical roots classical statistical theory has been formulated to describe repeatable experiments in terms of random variables. A drawback that accompanies this language is the difficulty to integrate and describe abstract structural knowledge. Rising theories like deep learning and complex networks dynamics, however, impressively demonstrate, that statistical modeling and inference can greatly benefit from the integration of abtract structural assumptions and in particular in the domains of complex natural data.

It is therefore not surprising that this development led to a growing interest in alternative approaches to formulate statistical theory. In particular \textsc{S. Amari} pursued a fundamentally different approach by focusing on the embedding function space of the probability distributions \cite{Amari1987}. This view motivated the reformulation of statistical theory by means of structural statistics \cite{Michl2019, Michl2020}. An important application and showcase are exponential families, which can be completely characterized their geometric structure in terms of dually flat statistical manifolds.

\section{Primary affine structure of Exponential Families}
\begin{defn*}[Exponential family]
\label{def:Exponential-family} \emph{Let $(\Omega,\,\mathcal{F_{\theta}})$
be a statistical model over a measurable space $(\Omega,\,\Sigma_{\Omega})$.
Then $(\Omega,\,\mathcal{F})$ is termed an exponential family if
and only if there exists an invertible function 
\[
\xi:\mathrm{dom}\theta\rightarrow\mathbb{R}^{n}
\]
a sufficient statistic 
\[
T:(\Omega,\,\Sigma_{\Omega})\rightarrow(\mathbb{R}^{n},\,\mathcal{B}(\mathbb{R}^{n}))
\]
 and a scalar function 
\[
f:\mathrm{dom}\theta\rightarrow\mathbb{R}
\]
such that for any$P_{\theta}\in\mathcal{F}$ and $\sigma\in\Sigma_{\Omega}$
it holds that:
\begin{align}
P_{\theta}[\sigma]= & \int_{\sigma}\exp\left(\boldsymbol{\xi}(\theta_{P})\cdot\boldsymbol{T}(\omega)\right)\mathrm{d}\mu(\omega)\label{eq:expfamily:standardform}\\
 & \cdot\int_{\sigma}\exp f(\theta_{P})\mathrm{d}\mu(\omega)\nonumber 
\end{align}
}
\end{defn*}
Since $T$ is a sufficient statistic a Markov morphism, given by $\mathcal{T}(P)[\sigma]=P[T^{-1}(\sigma)]$
is globally invertible and its restriction to $(\Omega,\,\mathcal{F})$
yields a statistical isomorphism $\mathfrak{T}\in\mathrm{iso(\mathbf{Stat})}$
to a statistical model $(X,\,\mathcal{P})\coloneqq\mathrm{img}\,\mathfrak{T}$
over a measurable space $(X,\,\Sigma_{X})$. Then $\eta\coloneqq\theta\circ\xi^{-1}$
is an identifiable parametrisation of $(X,\,\mathcal{P})$ and by
the definition
\[
\psi\coloneqq\xi^{-1}\circ f\circ\xi
\]
it follows that:
\begin{equation}
P_{\eta}[\sigma]=\int_{\sigma}\exp\left(\boldsymbol{\eta}_{P}\cdot\boldsymbol{x}-\psi(\boldsymbol{\eta}_{P})\right)\mathrm{d}\boldsymbol{x},\,\forall\sigma\in\Sigma_{X}\label{eq:expfamily:canonicalform}
\end{equation}
Without loss of generality any exponential family, as defined in \ref{eq:expfamily:standardform},
may therefore be assumed to be given by probability distributions
with a representation \ref{eq:expfamily:canonicalform}. This representation
is termed the canonical form of an exponential family and the parametrisation
$\eta$ the canonical, or natural parametrisation.

\begin{defn*}[Natural parametrisation]
\label{def:Natural-parametrisation} \emph{Let $(X,\,\mathcal{P})\in\mathrm{ob}(\mathbf{Stat})$
be an exponential family in canonical form, then the corresponding
canonical parametrisation $\eta$ is termed a natural parametrisation
of $(X,\,\mathcal{P})$ and the parameter vectors $\boldsymbol{\eta}_{P}\in\mathrm{dom}\,\eta\subseteq\mathbb{R}^{n}$
are termed natural parameters. }\textbf{Remark}\emph{: Exponential
families, given by the notation $(X,\,\mathcal{P}_{\eta})$ implicate
a canonical form and a natural parametrisation $\eta$.}
\end{defn*}
The function $\psi:\mathbb{R}^{n}\to\mathbb{R}$, given by an exponential
family in canonical form is known as the cumulant generating function
and may be regarded as a normalisation factor, that implements the
normalisation condition of the probability distribution:
\begin{equation}
\int_{X}\exp\left(\boldsymbol{\eta}_{P}\cdot\boldsymbol{x}-\psi(\boldsymbol{\eta}_{P})\right)\mathrm{d}\boldsymbol{x}=1\label{eq:expfam:normalization-condition}
\end{equation}
Since $\psi(\boldsymbol{\eta}_{P})$ is independent of $\boldsymbol{x}$
it may be pulled out of the integral and a rearrangement of equation
\ref{eq:expfam:normalization-condition} yields: 
\begin{equation}
\psi(\boldsymbol{\eta}_{P})=\log\int_{X}\exp(\boldsymbol{\eta}_{P}\cdot\boldsymbol{x})\mathrm{d}\boldsymbol{x}\label{eq:expfam:func}
\end{equation}
Due to this dependency the cumulant generating function relates different
statistical properties.
\begin{lem}
\label{lem:4.1}Let $\psi$ be the cumulant generating function of
an exponential family \emph{$(X,\,\mathcal{P}_{\eta})$}, then $\psi$
is convex with respect to $\eta$ and it's first and second order
derivatives are given by:
\begin{eqnarray}
\nabla_{\eta}\psi(\boldsymbol{\eta}_{P}) & = & \mathrm{E}_{\eta}[\boldsymbol{x}]\label{eq:expfam:nablaf}\\
\nabla_{\eta}^{2}\psi(\boldsymbol{\eta}_{P}) & = & \mathrm{Var_{\eta}}[\boldsymbol{x}]\label{eq:expfam:hessianf}
\end{eqnarray}
where $\mathrm{E}_{\eta}[\boldsymbol{x}]\in\mathbb{R}^{n}$ and $\mathrm{Var_{\eta}}[\boldsymbol{x}]\in\mathbb{R}^{n}$
respectively denote the vectorial expectation and variance of $\boldsymbol{x}$
with respect to $P_{\eta}$.
\end{lem}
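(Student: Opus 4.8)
The plan is to work directly from the explicit representation \ref{eq:expfam:func} of the cumulant generating function, which exhibits $\psi$ as the logarithm of the partition function
\[
Z(\boldsymbol{\eta}) \coloneqq \int_X \exp(\boldsymbol{\eta} \cdot \boldsymbol{x}) \, \mathrm{d}\boldsymbol{x}.
\]
Writing $\psi = \log Z$ reduces both derivative identities to repeated differentiation of $Z$ under the integral sign, followed by re-expressing the resulting integrals as expectations against the canonical density from \ref{eq:expfamily:canonicalform}.

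First I would compute the gradient. Differentiating $\psi = \log Z$ gives $\nabla_\eta \psi = Z^{-1}\,\nabla_\eta Z$, and interchanging gradient and integral yields $\nabla_\eta Z = \int_X \boldsymbol{x}\,\exp(\boldsymbol{\eta} \cdot \boldsymbol{x}) \, \mathrm{d}\boldsymbol{x}$. Since $Z = \exp\psi(\boldsymbol{\eta}_P)$, dividing reconstitutes precisely the canonical density inside the integral, so
\[
\nabla_\eta \psi(\boldsymbol{\eta}_P) = \int_X \boldsymbol{x}\,\exp\bigl(\boldsymbol{\eta}_P \cdot \boldsymbol{x} - \psi(\boldsymbol{\eta}_P)\bigr) \, \mathrm{d}\boldsymbol{x} = \mathrm{E}_\eta[\boldsymbol{x}],
\]
which is \ref{eq:expfam:nablaf}. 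Next, for the Hessian I would differentiate once more, component-wise. Applying the quotient rule to $\partial_j \psi = Z^{-1}\,\partial_j Z$ gives for the $(i,j)$ entry
\[
\partial_i \partial_j \psi = \frac{\partial_i \partial_j Z}{Z} - \frac{\partial_i Z}{Z} \cdot \frac{\partial_j Z}{Z}.
\]
The first term equals $\mathrm{E}_\eta[x_i x_j]$ (again by differentiating under the integral and normalising by $Z$), while the two factors of the subtracted term are $\mathrm{E}_\eta[x_i]$ and $\mathrm{E}_\eta[x_j]$ by the gradient computation above. Hence $\partial_i \partial_j \psi = \mathrm{E}_\eta[x_i x_j] - \mathrm{E}_\eta[x_i]\,\mathrm{E}_\eta[x_j] = \mathrm{Cov}_\eta[x_i, x_j]$, establishing \ref{eq:expfam:hessianf}.

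Convexity then follows at once: for any $\boldsymbol{v} \in \mathbb{R}^n$,
\[
\boldsymbol{v}^\top\,\mathrm{Var}_\eta[\boldsymbol{x}]\,\boldsymbol{v} = \mathrm{Var}_\eta[\boldsymbol{v} \cdot \boldsymbol{x}] \geq 0,
\]
so the Hessian is positive semidefinite throughout $\mathrm{dom}\,\eta$ and $\psi$ is convex.

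The step I expect to be the main obstacle is the analytic justification for interchanging differentiation and integration, rather than the algebra. This is precisely where regularity of the exponential family enters: on the interior of the natural parameter space the integrals $\int_X \lVert \boldsymbol{x} \rVert^k\,\exp(\boldsymbol{\eta} \cdot \boldsymbol{x}) \, \mathrm{d}\boldsymbol{x}$ are locally uniformly bounded, which supplies an integrable dominating function and legitimises differentiation under the integral sign (equivalently, guarantees that all moments of $\boldsymbol{x}$ exist and depend smoothly on $\boldsymbol{\eta}$). Once this point is secured, the computations above go through verbatim.
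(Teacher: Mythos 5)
Your proposal is correct, and it arrives at the same moment identities as the paper, but by a slightly different computational route. The paper never differentiates the explicit formula \ref{eq:expfam:func} for $\psi$; instead it differentiates the normalization identity $\int_X p_\eta(\boldsymbol{x})\,\mathrm{d}\boldsymbol{x}=1$, with $\psi$ sitting \emph{inside} the exponent of the canonical density. One differentiation yields $\int_X (x_i-\partial_i\psi(\boldsymbol{\eta}_P))\,p_\eta(\boldsymbol{x})\,\mathrm{d}\boldsymbol{x}=0$, which is the gradient identity, and a second differentiation of that relation yields the Hessian directly in central-moment form $\int_X (x_i-\partial_i\psi)(x_j-\partial_j\psi)\,p_\eta\,\mathrm{d}\boldsymbol{x}$. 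Your quotient-rule computation on $\log Z$ produces the same Hessian in raw-moment form $\mathrm{E}_\eta[x_ix_j]-\mathrm{E}_\eta[x_i]\,\mathrm{E}_\eta[x_j]$; the two are algebraically identical, so nothing of substance separates the derivations of \ref{eq:expfam:nablaf} and \ref{eq:expfam:hessianf}. Where your version is genuinely better is at the two points the paper treats loosely: first, your convexity argument via the quadratic form $\boldsymbol{v}^\top\mathrm{Var}_\eta[\boldsymbol{x}]\,\boldsymbol{v}=\mathrm{Var}_\eta[\boldsymbol{v}\cdot\boldsymbol{x}]\geq 0$ correctly concludes positive \emph{semi}definiteness, whereas the paper writes $\mathrm{Var}_\eta[\boldsymbol{x}]\geq 0$ as though the variance were a scalar and then asserts the Hessian is ``positive definite,'' an overstatement (semidefiniteness is what follows, and it suffices for convexity); second, you explicitly flag the domination condition needed to interchange differentiation and integration, a hypothesis the paper uses silently in both of its differentiation steps. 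Both routes are equally elementary, so the choice between them is one of bookkeeping: the normalization-identity route keeps everything expressed in terms of the density $p_\eta$ and delivers central moments at once, while the $\log Z$ route isolates the analytic issue (smoothness of the partition function) in a single object.
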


\begin{proof}
Let $P_{\eta}\in\mathcal{P}$ and let $p_{\eta}$ be the density function
of $P_{\eta}$ over $(X,\,\Sigma_{X})$. Then the normalization condition
is:
\begin{equation}
\int_{X}p_{\eta}(\boldsymbol{x})\mathrm{d}\boldsymbol{x}=1\label{eq:expfam:normalization}
\end{equation}
The partial derivation $\partial_{i}$ of equation $\ref{eq:expfam:normalization}$
to the natural parameter $\eta_{i}$ yields:
\begin{eqnarray}
 &  & \partial_{i}\int_{X}p_{\eta}(\boldsymbol{x})\mathrm{d}\boldsymbol{x}\nonumber \\
 & = & \int_{X}\mathrm{\partial_{i}exp}\left(\sum_{i=1}^{n}\eta_{i}x_{i}-\psi(\boldsymbol{\eta}_{P})\right)\mathrm{d}\boldsymbol{x}\nonumber \\
 & = & \int_{X}(x_{i}-\partial_{i}\psi(\boldsymbol{\eta}_{P}))p_{\eta}(\boldsymbol{x})\mathrm{d}\boldsymbol{x}\stackrel{\ref{eq:expfam:normalization}}{=}0\label{eq:expfam:partialf}
\end{eqnarray}
Therefore:
\begin{eqnarray*}
 &  & \nabla_{\eta}\psi(\boldsymbol{\eta}_{P})\\
 & \stackrel{\ref{eq:expfam:normalization}}{=} & \nabla_{\eta}\psi(\boldsymbol{\eta}_{P})\int_{X}p_{\eta}(\boldsymbol{x})\mathrm{d}\boldsymbol{x}\\
 & = & \int_{X}\nabla_{\eta}\psi(\boldsymbol{\eta}_{P})p_{\eta}(\boldsymbol{x})\mathrm{d}\boldsymbol{x}\\
 & \stackrel{\ref{eq:expfam:partialf}}{=} & \int_{X}\boldsymbol{x}p_{\eta}(\boldsymbol{x})\mathrm{d}\boldsymbol{x}\\
 & = & \mathrm{E_{\eta}}[\boldsymbol{x}]
\end{eqnarray*}
This proves equation \ref{eq:expfam:nablaf}. A further partial derivation
of \ref{eq:expfam:partialf} with respect to the natural parameter
$\eta_{j}$ yields: 
\begin{eqnarray}
0 & \stackrel{\ref{eq:expfam:partialf}}{=} & \partial_{j}\int_{X}(x_{i}-\partial_{i}\psi(\boldsymbol{\eta}_{P}))p_{\eta}(\boldsymbol{x})\mathrm{d}\boldsymbol{x}\nonumber \\
 & = & \int_{X}(x_{i}-\partial_{i}\psi(\boldsymbol{\eta}_{P}))\partial_{j}p_{\eta}(\boldsymbol{x})\mathrm{d}\boldsymbol{x}\nonumber \\
 &  & -\int_{X}\partial_{j}\partial_{i}\psi(\boldsymbol{\eta}_{P})\int_{X}p_{\eta}(\boldsymbol{x})\mathrm{d}\boldsymbol{x}\nonumber \\
 & \stackrel{\ref{eq:expfam:partialf}}{=} & \int_{X}(x_{i}-\partial_{i}\psi(\boldsymbol{\eta}_{P}))(x_{j}-\partial_{j}\psi(\boldsymbol{\eta}))p_{\eta}(\boldsymbol{x})\mathrm{d}\boldsymbol{x}\label{eq:expfam:dpartialf}\\
 &  & -\partial_{j}\partial_{i}\psi(\boldsymbol{\eta}_{P})\nonumber 
\end{eqnarray}
Therefore:
\begin{eqnarray*}
 &  & \nabla_{\eta}^{2}\psi(\boldsymbol{\eta}_{P})\\
 & \stackrel{\ref{eq:expfam:dpartialf}}{=} & \int_{X}(\boldsymbol{x}-\nabla_{\eta}\psi(\boldsymbol{\eta}_{P}))(\boldsymbol{x}-\nabla_{\eta}\psi(\boldsymbol{\eta}_{P}))p_{\eta}(\boldsymbol{x})\mathrm{d}\boldsymbol{x}\\
 & \stackrel{\ref{eq:expfam:nablaf}}{=} & \int_{X}(\boldsymbol{x}-\mathrm{E}[\boldsymbol{x}])^{2}p_{\eta}(\boldsymbol{x})\mathrm{d}\boldsymbol{x}\\
 & = & \mathrm{Var_{\eta}}[x]
\end{eqnarray*}
This proves equation \ref{eq:expfam:hessianf}. Furthermore since:
\[
\nabla_{\eta}^{2}\psi(\boldsymbol{\eta}_{p})=\mathrm{Var_{\eta}}[x]\geq0,\,\forall\boldsymbol{\eta}_{p}\in\mathrm{img}\eta
\]
\end{proof}
The Hessian matrix of $\psi$ is positive definite and therefore $\psi$
is convex. The convexity of $\psi$ may therefore be used to induce
a Riemannian metric by the Bregman divergence.
\begin{lem}
\label{lem:4.2}Let $\psi$ be the cumulant generating function of
an exponential family \emph{$(X,\,\mathcal{P}_{\eta})$.} Then the
Bregman divergence given by $D_{\psi}$, is the dual Kullback-Leibler
divergence, such that:
\begin{equation}
D_{\psi}[P\parallel Q]=D_{\mathrm{KL}}^{*}[P\parallel Q],\,\forall P,\,Q\in\mathcal{P}\label{eq:lem:3.14:1}
\end{equation}
\end{lem}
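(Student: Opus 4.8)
The plan is to compute both sides explicitly in the natural parameters and verify that they coincide. First I would record the Bregman divergence generated by the convex potential $\psi$ at its base point: for $P,Q\in\mathcal{P}$ with natural parameters $\boldsymbol{\eta}_P,\boldsymbol{\eta}_Q$,
\begin{equation*}
D_\psi[P\parallel Q]=\psi(\boldsymbol{\eta}_P)-\psi(\boldsymbol{\eta}_Q)-\nabla_\eta\psi(\boldsymbol{\eta}_Q)\cdot(\boldsymbol{\eta}_P-\boldsymbol{\eta}_Q),
\end{equation*}
which is well defined since Lemma \ref{lem:4.1} guarantees the convexity of $\psi$. On the other side, the dual Kullback-Leibler divergence is by definition the ordinary one with its arguments reversed, $D_{\mathrm{KL}}^{*}[P\parallel Q]=D_{\mathrm{KL}}[Q\parallel P]$, so the claim reduces to establishing $D_\psi[P\parallel Q]=D_{\mathrm{KL}}[Q\parallel P]$.

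The decisive simplification is that in canonical form the log-density is affine in $\boldsymbol{x}$. Writing out the log-likelihood ratio from \ref{eq:expfamily:canonicalform},
\begin{equation*}
\log\frac{p_{\eta_Q}(\boldsymbol{x})}{p_{\eta_P}(\boldsymbol{x})}=(\boldsymbol{\eta}_Q-\boldsymbol{\eta}_P)\cdot\boldsymbol{x}-\psi(\boldsymbol{\eta}_Q)+\psi(\boldsymbol{\eta}_P),
\end{equation*}
and taking the expectation with respect to $P_{\eta_Q}$ collapses the integral onto the single moment $\mathrm{E}_{\eta_Q}[\boldsymbol{x}]$, since the remaining terms are constant in $\boldsymbol{x}$. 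This gives
\begin{equation*}
D_{\mathrm{KL}}[Q\parallel P]=(\boldsymbol{\eta}_Q-\boldsymbol{\eta}_P)\cdot\mathrm{E}_{\eta_Q}[\boldsymbol{x}]-\psi(\boldsymbol{\eta}_Q)+\psi(\boldsymbol{\eta}_P).
\end{equation*}

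The key step is then to invoke the first-moment identity \ref{eq:expfam:nablaf} of Lemma \ref{lem:4.1}, which substitutes $\mathrm{E}_{\eta_Q}[\boldsymbol{x}]=\nabla_\eta\psi(\boldsymbol{\eta}_Q)$. After this replacement, flipping the sign on the linear term shows that the expression is literally the Bregman divergence recorded above, completing the identification. I expect no genuine analytic obstacle, since the affinity of the log-density in $\boldsymbol{x}$ removes any need to integrate nontrivially and reduces everything to Lemma \ref{lem:4.1}; the one point demanding care is the directionality of the duality—verifying that the Bregman divergence \emph{based at $Q$} matches the Kullback-Leibler divergence with $P$ and $Q$ \emph{interchanged}, rather than in the naive order, which is precisely what makes it the dual (starred) divergence.
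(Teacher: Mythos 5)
Your proposal is correct and takes essentially the same route as the paper: both arguments expand everything in natural parameters, exploit the affine log-density $\log q(\boldsymbol{x})=\boldsymbol{\eta}_Q\cdot\boldsymbol{x}-\psi(\boldsymbol{\eta}_Q)$ together with the first-moment identity $\nabla_\eta\psi(\boldsymbol{\eta}_Q)=\mathrm{E}_{\eta_Q}[\boldsymbol{x}]$ from Lemma \ref{lem:4.1}, and identify the result as the reversed Kullback-Leibler divergence $D_{\mathrm{KL}}[Q\parallel P]$. The only difference is direction (you unwind $D_{\mathrm{KL}}^{*}$ down to the Bregman form, the paper expands $D_\psi$ up to the KL integral), and your base-point bookkeeping with $(\boldsymbol{\eta}_P-\boldsymbol{\eta}_Q)$ is the correct standard convention, quietly repairing a sign slip in the paper's first displayed line, whose subsequent steps agree with your version.
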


\begin{proof}
Let $P,\,Q\in\mathcal{P}$ and $p,\,q$ their respectively density
functions over $(X,\,\Sigma_{X})$. By calculating:
\begin{equation}
\log p_{\eta}(\boldsymbol{x})=\boldsymbol{\eta}_{p}\cdot\boldsymbol{x}-\psi(\boldsymbol{\eta}_{p})\label{eq:expfam:logp}
\end{equation}
the Bregman divergence is derived by:
\begin{eqnarray*}
 &  & D_{\psi}[P\parallel Q]\\
 & \stackrel{}{=} & \psi(\boldsymbol{\eta}_{P})-\psi(\boldsymbol{\eta}_{Q})\\
 &  & -\nabla_{\eta}\psi(\boldsymbol{\eta}_{Q})\cdot(\boldsymbol{\eta}_{Q}-\boldsymbol{\eta}_{P})\\
 & = & \left(\int_{X}(\boldsymbol{\eta}_{Q}\cdot\boldsymbol{x})q(\boldsymbol{x})\mathrm{d}\boldsymbol{x}-\psi(\boldsymbol{\eta}_{Q})\right)\\
 &  & -\left(\int_{X}(\boldsymbol{\eta}_{P}\cdot\boldsymbol{x})q(\boldsymbol{x})\mathrm{d}\boldsymbol{x}-\psi(\boldsymbol{\eta}_{P})\right)\\
 & \stackrel{\ref{eq:expfam:logp}}{=} & \int_{X}q_{\eta}(\boldsymbol{x})\log q(\boldsymbol{x})\mathrm{d}\boldsymbol{x}\\
 &  & -\int_{X}q(\boldsymbol{x})\log p(\boldsymbol{x})\mathrm{d}\boldsymbol{x}\\
 & = & \int_{X}q(\boldsymbol{x})\log\frac{q(\boldsymbol{x})}{p(\boldsymbol{x})}\mathrm{d}\boldsymbol{x}\\
 & \stackrel{}{=} & D_{\mathrm{KL}}^{*}[P\parallel Q]
\end{eqnarray*}
\end{proof}
\begin{lem}
\label{lem:4.3}Let $\psi$ be the cumulant generating function of
an exponential family \emph{$(X,\,\mathcal{P}_{\eta})$.} Then the
Bregman divergence given by $D_{\psi}$, induces a Riemannian metric,
which is given by the Fisher information: 
\begin{equation}
g_{P,\psi}=\mathrm{I}[P\mid X]\label{eq:expfam:metric}
\end{equation}
\end{lem}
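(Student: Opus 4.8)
The plan is to use the standard prescription by which a divergence induces a Riemannian metric: the metric $g_{P,\psi}$ is the second-order coefficient of $D_\psi[P\parallel Q]$ in the expansion about the diagonal $Q=P$, expressed in the natural coordinates $\eta$. Concretely, I would show that both $g_{P,\psi}$ and the Fisher information $\mathrm{I}[P\mid X]$ reduce to the Hessian $\nabla_\eta^2\psi$, which by Lemma \ref{lem:4.1} (equation \ref{eq:expfam:hessianf}) equals $\mathrm{Var}_\eta[\boldsymbol{x}]$; equality of the two objects then follows by transitivity.

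First I would identify $g_{P,\psi}$ with the Hessian of $\psi$. Writing $D_\psi[P\parallel Q]=\psi(\boldsymbol{\eta}_P)-\psi(\boldsymbol{\eta}_Q)-\nabla_\eta\psi(\boldsymbol{\eta}_Q)\cdot(\boldsymbol{\eta}_P-\boldsymbol{\eta}_Q)$, one checks that the self-divergence $D_\psi[P\parallel P]$ vanishes and that its first-order derivatives vanish on the diagonal, so that the leading term is quadratic. Differentiating $D_\psi$ twice with respect to the natural parameters and evaluating at $\boldsymbol{\eta}_Q=\boldsymbol{\eta}_P$ makes every term except $\nabla_\eta^2\psi$ cancel, giving $(g_{P,\psi})_{ij}=\partial_i\partial_j\psi(\boldsymbol{\eta}_P)$. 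By Lemma \ref{lem:4.1} this matrix is positive definite, so $g_{P,\psi}$ is a genuine Riemannian metric and coincides with $\mathrm{Var}_\eta[\boldsymbol{x}]$.

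Next I would compute the Fisher information directly from the canonical form. Using \ref{eq:expfam:logp}, the score function is $\partial_i\log p_\eta(\boldsymbol{x})=x_i-\partial_i\psi(\boldsymbol{\eta}_P)$, and by \ref{eq:expfam:nablaf} this equals $x_i-\mathrm{E}_\eta[x_i]$, the centred statistic. Hence
\[
\mathrm{I}[P\mid X]_{ij}=\mathrm{E}_\eta\!\left[(x_i-\mathrm{E}_\eta[x_i])(x_j-\mathrm{E}_\eta[x_j])\right]=\mathrm{Var}_\eta[\boldsymbol{x}]_{ij},
\]
which one may equivalently read off from $-\mathrm{E}_\eta[\partial_i\partial_j\log p_\eta]=\partial_i\partial_j\psi(\boldsymbol{\eta}_P)$. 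Combining the two computations yields $g_{P,\psi}=\nabla_\eta^2\psi(\boldsymbol{\eta}_P)=\mathrm{I}[P\mid X]$, establishing \ref{eq:expfam:metric}.

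The algebra is routine; the step deserving care is the first. The subtlety is to make precise the definition of the metric $g_{P,\psi}$ induced by the divergence and to verify that the first-order terms of $D_\psi$ genuinely vanish on the diagonal, so that the second-order term is well defined and coordinate-consistent. This rests on the same regularity invoked in Lemma \ref{lem:4.1}, namely the admissibility of differentiation under the integral sign, together with the positive definiteness of $\nabla_\eta^2\psi$, which is exactly what guarantees that the induced bilinear form is nondegenerate.
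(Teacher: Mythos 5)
Your proposal is correct and takes essentially the same route as the paper's proof: both identify $g_{P,\psi}$ with the Hessian $\nabla_\eta^2\psi(\boldsymbol{\eta}_P)$, identify the score $\nabla_\eta\log p_\eta(\boldsymbol{x})=\boldsymbol{x}-\nabla_\eta\psi(\boldsymbol{\eta}_P)$ with the centred statistic, and conclude via the variance identity of Lemma \ref{lem:4.1}. The only difference is one of detail: you derive the identification of the induced metric with the Hessian by a second-order expansion of $D_\psi$ about the diagonal, whereas the paper simply takes $g_{P,\psi}=\nabla_\eta^2\psi(\boldsymbol{\eta}_P)$ as the (previously established) definition of the metric induced by a Bregman divergence.
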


\begin{proof}
Let $P\in\mathcal{P}$ and $p$ the probability density function of
$P$ over $(X,\,\Sigma_{X})$, then:
\begin{equation}
\nabla_{\eta}\log p_{\eta}(\boldsymbol{x})=\nabla_{\eta}(\boldsymbol{\eta}_{P}\cdot\boldsymbol{x}-\psi(\boldsymbol{\eta}_{P}))=\boldsymbol{x}-\nabla_{\eta}\psi(\boldsymbol{\eta}_{P})\label{eq:expfam:dellogp}
\end{equation}
The Riemannian metric, which is induced by the Bregman divergence
$D_{\psi}$ is therefore given by:

\begin{eqnarray*}
g_{P,\psi} & \stackrel{}{=} & \nabla_{\eta}^{2}\psi(\boldsymbol{\eta}_{P})\\
 & \stackrel{\ref{eq:expfam:dpartialf}}{=} & \int_{X}(\boldsymbol{x}-\nabla_{\eta}\psi(\boldsymbol{\eta}_{P}))^{2}p_{\eta}(\boldsymbol{x})\mathrm{d}\boldsymbol{x}\\
 & \stackrel{\ref{eq:expfam:dellogp}}{=} & \int_{X}(\nabla_{\eta}\log p_{\eta}(\boldsymbol{x}))^{2}p_{\eta}(\boldsymbol{x})\mathrm{d}\boldsymbol{x}\\
 & \stackrel{\mathrm{def}}{=} & \mathrm{I}[P_{\eta}\mid X]
\end{eqnarray*}
\end{proof}
\begin{lem}
\label{lem:4.4}Let $\psi$ be the cumulant generating function of
an exponential family $(X,\,\mathcal{P}_{\eta})$ and $(X,\,\mathcal{P}_{\eta},\,D_{\psi})$
the Riemannian statistical manifold, given by the Bregman divergence
$D_{\psi}$. Then the $\eta$-affine geodesics in $(X,\,\mathcal{P}_{\eta},\,D_{\psi})$
are given by exponential families.
\end{lem}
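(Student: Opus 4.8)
The plan is to unfold the definition of an $\eta$-affine geodesic, substitute it into the canonical density \ref{eq:expfamily:canonicalform}, and then recognise the resulting one-parameter family as an exponential family in its own right. The Bregman divergence $D_\psi$ endows $(X,\,\mathcal{P}_\eta)$ with a dually flat structure whose metric is the Fisher information (Lemma \ref{lem:4.3}) and in which the natural parameters $\eta$ form an affine chart for the primary (exponential) connection. Accordingly, an $\eta$-affine geodesic issuing from $P_{\eta_0}$ in the tangent direction $\boldsymbol{v}\in\mathbb{R}^n$ is the curve whose natural parameters depend affinely on the curve parameter, namely $\boldsymbol{\eta}(t)=\boldsymbol{\eta}_0+t\boldsymbol{v}$.

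I would then insert this affine path into $p_\eta(\boldsymbol{x})=\exp(\boldsymbol{\eta}_P\cdot\boldsymbol{x}-\psi(\boldsymbol{\eta}_P))$ and regroup the exponent so that the linear dependence on $t$ is isolated:
\[
p_{\eta(t)}(\boldsymbol{x})=\exp(\boldsymbol{\eta}_0\cdot\boldsymbol{x})\,\exp\!\left(t\,(\boldsymbol{v}\cdot\boldsymbol{x})-\psi(\boldsymbol{\eta}_0+t\boldsymbol{v})\right).
\]
This displays the family $\{P_{\eta(t)}\}_t$ in the standard form \ref{eq:expfamily:standardform} with scalar natural parameter $t$, sufficient statistic $\tilde{T}(\boldsymbol{x})=\boldsymbol{v}\cdot\boldsymbol{x}$, the factor $\exp(\boldsymbol{\eta}_0\cdot\boldsymbol{x})$ absorbed into the base measure, and reduced cumulant generating function $\tilde\psi(t)=\psi(\boldsymbol{\eta}_0+t\boldsymbol{v})$. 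To confirm that $\tilde\psi$ really is a cumulant generating function I would check its defining relation \ref{eq:expfam:func} against the modified base measure, verifying that $\log\int_X\exp(t(\boldsymbol{v}\cdot\boldsymbol{x}))\exp(\boldsymbol{\eta}_0\cdot\boldsymbol{x})\mathrm{d}\boldsymbol{x}=\psi(\boldsymbol{\eta}_0+t\boldsymbol{v})$, which is immediate from the normalisation \ref{eq:expfam:func} of the ambient family.

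The main obstacle is conceptual rather than computational: the regrouping of the exponent is a one-line manipulation, so the substance lies in justifying that the $\eta$-affine geodesics are exactly the straight lines $\boldsymbol{\eta}_0+t\boldsymbol{v}$. This step requires identifying the connection induced by $D_\psi$ with the flat connection of the dually flat structure, for which $\eta$ is by construction an affine coordinate system, so that geodesics obey $\ddot{\boldsymbol{\eta}}=0$. Once that identification is secured, Lemma \ref{lem:4.1} finishes the argument: since $\tilde\psi''(t)=\boldsymbol{v}^{\!\top}\nabla_\eta^2\psi(\boldsymbol{\eta}_0+t\boldsymbol{v})\,\boldsymbol{v}=\boldsymbol{v}^{\!\top}\mathrm{Var}_{\eta}[\boldsymbol{x}]\,\boldsymbol{v}>0$ for $\boldsymbol{v}\neq 0$, the reduced cumulant $\tilde\psi$ is strictly convex and $\{P_{\eta(t)}\}_t$ is a bona fide exponential family.
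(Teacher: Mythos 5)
Your proposal is correct and takes essentially the same route as the paper: both unfold the definition of an $\eta$-affine geodesic as an affine line in the natural parameters, substitute it into the canonical density \ref{eq:expfamily:canonicalform}, and regroup the exponent to exhibit a one-parameter exponential family in the curve parameter $t$. The differences are presentational only --- the paper parametrises the geodesic by its endpoints $\boldsymbol{\eta}_{P,Q}(t)=(1-t)\boldsymbol{\eta}_P+t\boldsymbol{\eta}_Q$ and rewrites the exponent via $\log p$ and $\log q$ before matching the standard form \ref{eq:expfamily:standardform}, whereas you use the point-direction form $\boldsymbol{\eta}_0+t\boldsymbol{v}$ and explicitly identify the base measure, sufficient statistic $\boldsymbol{v}\cdot\boldsymbol{x}$, and reduced cumulant function $\tilde\psi(t)=\psi(\boldsymbol{\eta}_0+t\boldsymbol{v})$, with a convexity check the paper omits.
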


\begin{proof}
The $\eta$-affine geodesics in $(X,\,\mathcal{P_{\eta}},\,D_{\psi})$
are given by affine linear curves in the $\eta$-parametrisation.
For $P,\,Q\in\mathcal{P}$ and $\boldsymbol{\eta}_{P,Q}(t)=(1-t)\boldsymbol{\eta}_{P}+t\boldsymbol{\eta}_{Q}$,
with $t\in[0,\,1]$ let $\gamma_{P,Q}(t)$ be the $\eta$-affine geodesic
connecting $P$ with $Q$ and $\mathrm{d}_{X}\gamma_{P,Q}(t)$ the
probability density function of $\gamma_{P,Q}(t)$ over $(X,\,\Sigma_{X})$.
Then then representation of $\mathrm{d}_{X}\gamma_{P,Q}(t)$ in the
$\eta$-parametrisation is given by:
\[
\mathrm{d}_{X}\gamma_{P,Q}(t)(\boldsymbol{x})=\exp\left(t(\boldsymbol{\eta}_{Q}-\boldsymbol{\eta}_{P})\cdot\boldsymbol{x}+\boldsymbol{\eta}_{P}\cdot\boldsymbol{x}-\psi_{\eta}(t)\right)
\]
Let $p,\,q$ be the respective probability densities of $P$ and $Q$
over $(X,\,\Sigma_{X})$. A $\log$-transformation and a subsequent
substitution of $\boldsymbol{\eta}_{P}$ and $\boldsymbol{\eta}_{Q}$
by equation \ref{eq:expfam:logp} yields:
\[
\log\mathrm{d}_{X}\gamma_{P,Q}(t)(\boldsymbol{x})=(1-t)\log p(\boldsymbol{x})+t\log q(\boldsymbol{x})-\psi(t)
\]
A further $\exp$-transformation and subsequent integration over a
measurable set $\sigma\in\Sigma$ gives:

\begin{align*}
\gamma_{P,Q}(t)[\sigma] & =\int_{\sigma}\exp\left((1-t)\log p(\boldsymbol{x})\right)\mathrm{d}\boldsymbol{x}\\
 & \cdot\int_{\sigma}\exp\left(t\log q(\boldsymbol{x})-\psi(t)\right)\mathrm{d}\boldsymbol{x}
\end{align*}
For varying $P,\,Q\in\mathcal{P}$ this yields a generic representation
of $\eta$-affine geodesics in $(X,\,\mathcal{P_{\eta}},\,D_{\psi})$,
by are exponential families with respect to the curve parameter $t$.
\end{proof}
\begin{lem}
\label{lem:4.5}Let $\psi$ be the cumulant generating function of
an exponential family $(X,\,\mathcal{P}_{\eta})$ and $(X,\,\mathcal{P}_{\eta},\,D_{\psi})$
the Riemannian statistical manifold, given by the Bregman divergence
$D_{\psi}$. Then the $\eta$-affine geodesics in $(X,\,\mathcal{P}_{\eta},\,D_{\psi})$
are flat with respect to the Fisher information.
\end{lem}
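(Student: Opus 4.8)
The plan is to identify the affine connection whose geodesics are the $\eta$-affine curves of Lemma \ref{lem:4.4}, and then to show that its Riemann curvature tensor vanishes identically when paired with the Fisher metric $g_{P,\psi}=\mathrm{I}[P\mid X]$ of Lemma \ref{lem:4.3}. This connection is the exponential connection $\nabla^{(e)}$, the $\alpha=1$ member of the one-parameter family of dualistic connections dual to $g_{P,\psi}$, and the claimed flatness amounts to verifying that the pair $(g_{P,\psi},\nabla^{(e)})$ forms a flat structure on $(X,\mathcal{P}_{\eta},D_{\psi})$.

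The key structural observation, which I would record first, is that for an exponential family the second derivatives of the log-density are deterministic. Starting from $\ell_{\eta}(\boldsymbol{x})=\log p_{\eta}(\boldsymbol{x})=\boldsymbol{\eta}_{P}\cdot\boldsymbol{x}-\psi(\boldsymbol{\eta}_{P})$ of equation \ref{eq:expfam:logp}, the score is $\partial_{i}\ell_{\eta}(\boldsymbol{x})=x_{i}-\partial_{i}\psi(\boldsymbol{\eta}_{P})$ by equation \ref{eq:expfam:dellogp}, while
\[
\partial_{i}\partial_{j}\ell_{\eta}(\boldsymbol{x})=-\partial_{i}\partial_{j}\psi(\boldsymbol{\eta}_{P})
\]
is independent of $\boldsymbol{x}$. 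It is precisely this deterministic Hessian of $\ell_{\eta}$ that collapses the connection coefficients.

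Next I would evaluate the coefficients of the $e$-connection in the $\eta$-chart,
\[
\Gamma^{(e)}_{ij,k}=\mathrm{E}_{\eta}\!\left[(\partial_{i}\partial_{j}\ell_{\eta})(\partial_{k}\ell_{\eta})\right].
\]
Substituting the two displays above, the deterministic factor $-\partial_{i}\partial_{j}\psi(\boldsymbol{\eta}_{P})$ pulls out of the expectation, leaving $\mathrm{E}_{\eta}[x_{k}-\partial_{k}\psi(\boldsymbol{\eta}_{P})]=\mathrm{E}_{\eta}[x_{k}]-\partial_{k}\psi(\boldsymbol{\eta}_{P})$, which vanishes by equation \ref{eq:expfam:nablaf} of Lemma \ref{lem:4.1}. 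Hence $\Gamma^{(e)}_{ij,k}=0$ throughout the $\eta$-parametrisation, confirming that the $\eta$-affine curves of Lemma \ref{lem:4.4} are exactly the $\nabla^{(e)}$-geodesics.

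Finally, since the Riemann curvature tensor is a tensor, its vanishing in one coordinate system in which all Christoffel symbols are identically zero forces it to vanish in every chart. Therefore $\nabla^{(e)}$ is flat and $(g_{P,\psi},\nabla^{(e)})$ is a flat structure, so the $\eta$-affine geodesics are flat with respect to the Fisher information. The main obstacle here is conceptual rather than computational: one must first pin down that \emph{flat with respect to the Fisher information} refers to the curvature of the $e$-connection dual to $g_{P,\psi}$, and not to the Levi-Civita connection of $g_{P,\psi}$, for which the $\eta$-affine curves are in general not geodesics. Once the correct connection is fixed, the deterministic second derivative of the exponential-family log-density makes the vanishing of the coefficients immediate.
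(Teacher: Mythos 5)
The paper states Lemma \ref{lem:4.5} without any proof at all --- the lemma is immediately followed by the next section heading, and the same omission occurs for its dual counterpart, Lemma \ref{lem:4.10} --- so there is no argument of the paper's to compare yours against; your proposal fills a genuine gap. Your argument is correct and is the standard Amari-style one: in the $\eta$-chart the log-density $\ell_{\eta}(\boldsymbol{x})=\boldsymbol{\eta}_{P}\cdot\boldsymbol{x}-\psi(\boldsymbol{\eta}_{P})$ has deterministic second derivatives $\partial_{i}\partial_{j}\ell_{\eta}=-\partial_{i}\partial_{j}\psi(\boldsymbol{\eta}_{P})$, so the exponential-connection coefficients $\Gamma^{(e)}_{ij,k}=\mathrm{E}_{\eta}\left[(\partial_{i}\partial_{j}\ell_{\eta})(\partial_{k}\ell_{\eta})\right]$ factor as $-\partial_{i}\partial_{j}\psi(\boldsymbol{\eta}_{P})\,\mathrm{E}_{\eta}\left[x_{k}-\partial_{k}\psi(\boldsymbol{\eta}_{P})\right]$, which vanishes by equation \ref{eq:expfam:nablaf}; vanishing Christoffel symbols in one chart force the (tensorial) curvature to vanish in every chart, and the $\eta$-affine curves of Lemma \ref{lem:4.4} are exactly the $\nabla^{(e)}$-geodesics because the geodesic equation reduces to $\ddot{\boldsymbol{\eta}}=0$. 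Your closing remark is also the right conceptual anchor: ``flat with respect to the Fisher information'' must be read as flatness of the $e$-connection within the dualistic structure attached to $g_{P,\psi}$, not of the Levi-Civita connection of $g_{P,\psi}$, whose geodesics the $\eta$-affine curves generally are not. Two minor caveats: the paper's working definition of flatness lives in its companion references (Lemma 3.10, Proposition 3.2 and Corollary 3.2 are cited but never reproduced here), so strictly speaking you should note that your curvature-based notion is the one that framework presupposes; and your phrase ``the one-parameter family of dualistic connections dual to $g_{P,\psi}$'' is loose, since duality with respect to the metric pairs $\nabla^{(\alpha)}$ with $\nabla^{(-\alpha)}$ rather than distinguishing any single connection.
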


\section{Dual affine structure of Exponential Families}

In the purpose, to study the structure, obtained by a Legendre transformation
a further parametric family is introduced, that is shown to cover
this dual structure. This parametric family is the mixture family. 
\begin{defn*}[Mixture family]
\label{def:Mixture-family} \emph{Let 
\[
(\Omega,\,\mathcal{F_{\theta}})\in\mathrm{ob}(\mathbf{Stat})
\]
be a statistical model over a measurable space $(\Omega,\,\Sigma_{\Omega})$.
Then $(\Omega,\,\mathcal{F})$ is a mixture family if and only if
there is an invertible function $w:\mathrm{dom}\theta\rightarrow\mathbb{R}^{n+1}$
with $w_{i}(\theta_{p})>0,\,\forall i$ and $\sum_{i=0}^{n}w_{i}(\theta_{p})=1,\,\forall\theta_{p}$
and $n+1$ pairwise independent probability distributions $Q_{i}$
over $(\Omega,\,\Sigma_{\Omega})$, such that for all $P_{\theta}\in\mathcal{F_{\theta}}$:
\begin{equation}
P_{\theta}[\sigma]=\sum_{i=0}^{n}w_{i}(\theta_{p})Q_{i}[\sigma],\,\forall\sigma\in\Sigma_{\Omega}\label{eq:mixfam:stanform}
\end{equation}
}
\end{defn*}
The normalization condition can be used to restrict the number of
parameters, by the definition:
\begin{equation}
R(\theta_{p})[\sigma]=\left(1-\sum_{i=1}^{n}w_{i}(\theta_{p})\right)Q_{0}[\sigma]\label{eq:mixfam:rest}
\end{equation}
Therefore:\emph{
\[
P_{\theta}[\sigma]=\sum_{i=1}^{n}w_{i}(\theta_{p})Q_{i}[\sigma]+R(\theta_{p})[\sigma]
\]
}This transformation is a globally invertible Markov morphism, and
its restriction to $(\Omega,\,\mathcal{F})$ yields a statistical
isomorphism $\mathfrak{T}\in\mathrm{iso(\mathbf{Stat})}$ to a statistical
model $(X,\,\mathcal{P})\coloneqq\mathrm{img}\mathcal{\mathfrak{T}}$.
Then $\mu\coloneqq w\circ\theta^{-1}$ is an identifiable parametrisation
of $(X,\,\mathcal{P})$ and since $w$ defines a probability distribution
over $\theta$ it may be regarded as the expected value $\mathrm{E_{\theta}}[\boldsymbol{x}]$
with respect to $P_{\theta}$. By the definition of $\varphi\coloneqq-w^{-1}\circ R\circ w$
it follows that:\emph{
\begin{equation}
P_{\mu}[\sigma]=\boldsymbol{\mu}_{P}\cdot\boldsymbol{Q}[\sigma]-\varphi(\boldsymbol{\mu}_{P})[\sigma]\label{eq:mixfam:canform}
\end{equation}
}Without loss of generality any mixture family may therefore be assumed
to be given by equations \ref{eq:mixfam:canform}. This representation
is termed the canonical form of a mixture family and the parametrisation
$\mu$ the expectation parametrisation.
\begin{defn*}[Expectation parametrisation]
\label{def:Expectation-parametrisation} \emph{Let 
\[
(X,\,\mathcal{P}_{\theta})\in\mathrm{ob}(\mathbf{Stat})
\]
Then a parametrisation $\mu$ with $\mathrm{dom}\mu\subseteq\mathbb{R}^{n}$,
which is given by $\boldsymbol{\mu}_{P}=\mathrm{E_{\theta}}[\boldsymbol{x}]$,
where $\mathrm{E_{\theta}}[\boldsymbol{x}]$ denotes the vectorial
expectation of $\boldsymbol{x}$ with respect to $P_{\theta}$, is
termed an expectation parametrisation with respect to $\theta$ and
parameter vectors $\boldsymbol{\mu}_{P}\in\mathrm{dom}\mu$ are termed
expectation parameters.}
\end{defn*}
\begin{lem}
\label{lem:4.6}Let $\psi$ be the cumulant generating function of
an exponential family \emph{$(X,\,\mathcal{P}_{\eta})$} and $\varphi$
the negative entropy $-\mathrm{H}[P_{\eta}\mid X]$. Then the dual
parametrisation $\eta{}^{*}$ is given by the expectation parametrisation
$\mu$ and the Legendre dual function $\psi^{*}$ by $\varphi$, such
that:
\begin{eqnarray}
\boldsymbol{\eta}_{P}^{*} & = & \boldsymbol{\mu}_{P}\label{eq:expfam:dualparam}\\
\psi^{*}(\boldsymbol{\eta}_{P}^{*}) & = & \varphi(\boldsymbol{\mu}_{P})\label{eq:expfam:dualfunc}
\end{eqnarray}
\end{lem}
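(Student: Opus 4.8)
The plan is to establish the two claimed identities separately, drawing on the expectation formula for $\nabla_\eta\psi$ from Lemma \ref{lem:4.1} and the canonical log-density representation \ref{eq:expfam:logp}. Recall that the dual parametrisation is defined by $\boldsymbol{\eta}_P^* = \nabla_\eta\psi(\boldsymbol{\eta}_P)$ and the Legendre dual function by $\psi^*(\boldsymbol{\eta}^*) = \sup_{\boldsymbol{\eta}}\left(\boldsymbol{\eta}\cdot\boldsymbol{\eta}^* - \psi(\boldsymbol{\eta})\right)$.

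First I would prove \ref{eq:expfam:dualparam}. By Lemma \ref{lem:4.1} the gradient of the cumulant generating function is the vectorial expectation, $\nabla_\eta\psi(\boldsymbol{\eta}_P) = \mathrm{E}_\eta[\boldsymbol{x}]$. Since the expectation parameter is by definition $\boldsymbol{\mu}_P = \mathrm{E}_\eta[\boldsymbol{x}]$, the identity $\boldsymbol{\eta}_P^* = \boldsymbol{\mu}_P$ follows at once. The positive-definiteness of the Hessian, again from Lemma \ref{lem:4.1}, guarantees that $\boldsymbol{\eta}\mapsto\boldsymbol{\eta}^*$ is a diffeomorphism, so that $\mu$ is a well-defined global parametrisation dual to $\eta$ and the supremum in the Legendre transform is attained at a unique stationary point.

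Next I would evaluate $\psi^*$ at $\boldsymbol{\eta}_P^*$. Because $\psi$ is convex with invertible gradient, the supremum defining $\psi^*(\boldsymbol{\eta}_P^*)$ is attained precisely at $\boldsymbol{\eta} = \boldsymbol{\eta}_P$, so that $\psi^*(\boldsymbol{\eta}_P^*) = \boldsymbol{\eta}_P\cdot\boldsymbol{\eta}_P^* - \psi(\boldsymbol{\eta}_P) = \boldsymbol{\eta}_P\cdot\mathrm{E}_\eta[\boldsymbol{x}] - \psi(\boldsymbol{\eta}_P)$. Separately I would compute the negative entropy directly: substituting the canonical log-density \ref{eq:expfam:logp} into $\varphi(\boldsymbol{\mu}_P) = -\mathrm{H}[P_\eta\mid X] = \int_X p_\eta(\boldsymbol{x})\log p_\eta(\boldsymbol{x})\,\mathrm{d}\boldsymbol{x}$ and using linearity of the integral together with the normalization condition \ref{eq:expfam:normalization} yields the same expression $\boldsymbol{\eta}_P\cdot\mathrm{E}_\eta[\boldsymbol{x}] - \psi(\boldsymbol{\eta}_P)$. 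Matching the two computations proves \ref{eq:expfam:dualfunc}.

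The algebra itself is entirely routine; the single point that carries the whole statement is the coincidence of the Legendre value with the negative entropy. The obstacle is therefore interpretive rather than computational: one must recognize that the self-pairing $\boldsymbol{\eta}_P\cdot\mathrm{E}_\eta[\boldsymbol{x}]$ appearing in the Legendre transform is exactly the expectation of the linear part of the canonical log-density, which is what couples the purely geometric Legendre duality to the information-theoretic negative entropy. The only genuine analytic subtlety worth checking is that the supremum is truly attained, rather than being a merely formal critical point, and this rests on the convexity established in Lemma \ref{lem:4.1}.
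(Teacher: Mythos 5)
Your proposal is correct and takes essentially the same route as the paper: equation \ref{eq:expfam:dualparam} follows from the gradient formula of Lemma \ref{lem:4.1} together with the definition of the expectation parametrisation, and equation \ref{eq:expfam:dualfunc} follows by evaluating the Legendre transform at its stationary point $\boldsymbol{\eta}_{P}$ and identifying $\boldsymbol{\eta}_{P}\cdot\mathrm{E}_{\eta}[\boldsymbol{x}]-\psi(\boldsymbol{\eta}_{P})$ with $\int_{X}p_{\eta}(\boldsymbol{x})\log p_{\eta}(\boldsymbol{x})\,\mathrm{d}\boldsymbol{x}$ via the canonical log-density \ref{eq:expfam:logp} and the normalization condition \ref{eq:expfam:normalization}. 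The only differences are cosmetic: you compute both sides to a common intermediate expression and explicitly justify attainment of the supremum (which the paper takes for granted), whereas the paper chains the same identities in a single sequence of equalities.
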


\begin{proof}
Let $P_{\eta}\in\mathcal{P}$ and $p_{\eta}$ the density function
of $P_{\eta}$ over $(X,\,\Sigma_{X})$. Then from equation \ref{eq:expfam:nablaf}
is follows, that:
\[
\boldsymbol{\eta}_{P}^{*}=\nabla_{\eta}\psi(\boldsymbol{\eta}_{P})\stackrel{\ref{eq:expfam:nablaf}}{=}\mathrm{E_{\eta}}[\boldsymbol{x}]\stackrel{\mathrm{def}}{=}\boldsymbol{\mu}_{P}
\]
the dual parametrisation $\boldsymbol{\eta}_{P}^{*}$ directly follows
from . The Legendre dual function $\psi^{*}$ is derived by:
\begin{eqnarray*}
\psi^{*}(\boldsymbol{\eta}_{P}^{*}) & \stackrel{}{=} & \boldsymbol{\eta}_{P}\cdot\nabla_{\eta}\psi(\boldsymbol{\eta}_{P})-\psi(\boldsymbol{\eta}_{P})\\
 & \stackrel{\ref{eq:expfam:normalization}}{=} & \int_{X}(\boldsymbol{\eta}_{P}\cdot\boldsymbol{x})p_{\eta}(\boldsymbol{x})\mathrm{d}\boldsymbol{x}-\psi(\boldsymbol{\eta}_{P})\int_{X}p_{\eta}(\boldsymbol{x})\mathrm{d}\boldsymbol{x}\\
 & = & \int_{X}p_{\eta}(\boldsymbol{x})(\boldsymbol{\eta}_{P}\cdot\boldsymbol{x}-\psi(\boldsymbol{\eta}_{P}))\mathrm{d}\boldsymbol{x}\\
 & \stackrel{\ref{eq:expfam:func}}{=} & \int_{X}p_{\eta}(\boldsymbol{x})\log p_{\eta}(\boldsymbol{x})\mathrm{d}\boldsymbol{x}\\
 & = & -\mathrm{H}[P_{\eta}\mid X]
\end{eqnarray*}
\end{proof}
\begin{lem}
\label{lem:4.7}Let $\psi$ be the cumulant generating function of
an exponential family \emph{$(X,\,\mathcal{P}_{\eta})$}. Then the
The Bregman divergence, given by the Legendre dual function $\varphi$
is the Kullback-Leibler divergence:
\[
D_{\varphi}[P\parallel Q]=D_{\mathrm{KL}}[P\parallel Q]
\]
\end{lem}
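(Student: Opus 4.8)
The plan is to compute $D_{\varphi}$ directly from the definition of the Bregman divergence, paralleling the strategy used for $D_{\psi}$ in Lemma \ref{lem:4.2}, but now working in the dual (expectation) coordinates $\mu$ and feeding in the two facts established in Lemma \ref{lem:4.6}: first, that $\varphi(\boldsymbol{\mu}_{P})=\int_{X}p\log p\,\mathrm{d}\boldsymbol{x}$ is the negative entropy, and second, that the Legendre duality $\varphi=\psi^{*}$, $\boldsymbol{\mu}=\boldsymbol{\eta}^{*}$ forces the gradient identity $\nabla_{\mu}\varphi(\boldsymbol{\mu}_{P})=\boldsymbol{\eta}_{P}$.

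First I would write the Bregman divergence of $\varphi$ in the $\mu$-coordinates,
\[
D_{\varphi}[P\parallel Q]=\varphi(\boldsymbol{\mu}_{P})-\varphi(\boldsymbol{\mu}_{Q})-\nabla_{\mu}\varphi(\boldsymbol{\mu}_{Q})\cdot(\boldsymbol{\mu}_{P}-\boldsymbol{\mu}_{Q}),
\]
and substitute $\varphi(\boldsymbol{\mu}_{P})=\int_{X}p\log p\,\mathrm{d}\boldsymbol{x}$, $\varphi(\boldsymbol{\mu}_{Q})=\int_{X}q\log q\,\mathrm{d}\boldsymbol{x}$ and $\nabla_{\mu}\varphi(\boldsymbol{\mu}_{Q})=\boldsymbol{\eta}_{Q}$. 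The only remaining term is the linear one, $\boldsymbol{\eta}_{Q}\cdot(\boldsymbol{\mu}_{P}-\boldsymbol{\mu}_{Q})$. Since $\boldsymbol{\mu}_{P}=\mathrm{E}_{P}[\boldsymbol{x}]$ and $\boldsymbol{\mu}_{Q}=\mathrm{E}_{Q}[\boldsymbol{x}]$ this equals $\int_{X}(\boldsymbol{\eta}_{Q}\cdot\boldsymbol{x})(p-q)\,\mathrm{d}\boldsymbol{x}$, and I would rewrite $\boldsymbol{\eta}_{Q}\cdot\boldsymbol{x}=\log q+\psi(\boldsymbol{\eta}_{Q})$ via equation \ref{eq:expfam:logp}. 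The additive constant $\psi(\boldsymbol{\eta}_{Q})$ integrates against $p-q$ to zero because both densities normalise to one, leaving $\int_{X}(\log q)(p-q)\,\mathrm{d}\boldsymbol{x}=\int_{X}p\log q\,\mathrm{d}\boldsymbol{x}-\int_{X}q\log q\,\mathrm{d}\boldsymbol{x}$. Collecting all four pieces, the two $\int_{X}q\log q$ contributions cancel and one is left with $\int_{X}p\log p\,\mathrm{d}\boldsymbol{x}-\int_{X}p\log q\,\mathrm{d}\boldsymbol{x}=\int_{X}p\log(p/q)\,\mathrm{d}\boldsymbol{x}=D_{\mathrm{KL}}[P\parallel Q]$, as claimed.

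The step that carries the real content is the gradient identity $\nabla_{\mu}\varphi(\boldsymbol{\mu}_{Q})=\boldsymbol{\eta}_{Q}$. It is not a fresh computation but a consequence of the Legendre structure recorded in Lemma \ref{lem:4.6}: from $\varphi(\boldsymbol{\mu})=\boldsymbol{\eta}\cdot\boldsymbol{\mu}-\psi(\boldsymbol{\eta})$ with $\boldsymbol{\mu}=\nabla_{\eta}\psi(\boldsymbol{\eta})$, differentiating in $\mu$ makes the chain-rule contributions cancel precisely because $\boldsymbol{\mu}-\nabla_{\eta}\psi(\boldsymbol{\eta})=0$, so $\nabla_{\mu}\varphi=\boldsymbol{\eta}$. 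I expect the main place to slip is the bookkeeping of the argument order: $D_{\varphi}$ must be evaluated with its gradient anchored at $Q$ while the entropy difference runs $P$ minus $Q$, and since the Kullback-Leibler divergence is asymmetric a swapped anchor would silently produce $D_{\mathrm{KL}}[Q\parallel P]$ instead. As a cross-check, the result is exactly the Legendre-dual counterpart of Lemma \ref{lem:4.2}: the general identity $D_{\varphi}[P\parallel Q]=D_{\psi}[Q\parallel P]$ for Legendre-dual pairs, combined with $D_{\psi}[Q\parallel P]=D_{\mathrm{KL}}^{*}[Q\parallel P]=D_{\mathrm{KL}}[P\parallel Q]$, reproduces the same conclusion without performing any integration.
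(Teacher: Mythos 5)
Your proof is correct, but it takes a genuinely different route from the paper. The paper's own proof is a three-line appeal to the general dual-divergence identity $D_{\varphi}[P\parallel Q]=D_{\varphi^{*}}[Q_{\mu}^{*}\parallel P_{\mu}^{*}]$ (Lemma 3.9 of the companion structural-statistics material, cited but not stated in this paper), which together with $\varphi^{*}=\psi$ and Lemma \ref{lem:4.2} yields $D_{\varphi}[P\parallel Q]=D_{\psi}[Q_{\eta}\parallel P_{\eta}]=D_{\mathrm{KL}}[P\parallel Q]$ --- this is precisely the ``cross-check'' you sketch in your closing sentences, so you recover the paper's argument as a by-product of your own. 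Your main argument is instead a direct expansion of $D_{\varphi}$ in the $\mu$-coordinates from the two outputs of Lemma \ref{lem:4.6}, namely $\varphi(\boldsymbol{\mu}_{P})=\int_{X}p\log p\,\mathrm{d}\boldsymbol{x}$ and the Legendre gradient identity $\nabla_{\mu}\varphi(\boldsymbol{\mu}_{Q})=\boldsymbol{\eta}_{Q}$ (which you justify correctly by the cancelling chain-rule terms), combined with equation \ref{eq:expfam:logp} and the normalisation of $p-q$; the algebra checks out: $\boldsymbol{\eta}_{Q}\cdot(\boldsymbol{\mu}_{P}-\boldsymbol{\mu}_{Q})=\int_{X}(\log q)(p-q)\,\mathrm{d}\boldsymbol{x}$, the two $\int_{X}q\log q$ contributions cancel, and $D_{\mathrm{KL}}[P\parallel Q]$ remains with the argument order intact. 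As to what each approach buys: your computation is fully self-contained within this paper and makes the anchoring of the gradient at $Q$ explicit --- a real virtue here, since the displayed Bregman formula opening the paper's proof of Lemma \ref{lem:4.2} carries a sign slip (it shows $(\boldsymbol{\eta}_{Q}-\boldsymbol{\eta}_{P})$ where the standard convention, and the subsequent lines of that same proof, require $(\boldsymbol{\eta}_{P}-\boldsymbol{\eta}_{Q})$), so your caution about the asymmetry silently flipping the result to $D_{\mathrm{KL}}[Q\parallel P]$ is well placed; the paper's route, by contrast, is shorter, requires no integration, and isolates the structural content by showing the statement is an instance of a fact about arbitrary Legendre-dual Bregman pairs.
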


\begin{proof}
From lemma \ref{lem:3.9}, equation \ref{eq:divergence_dual_divergence}
it follows, that:
\begin{eqnarray*}
D_{\varphi}[P\parallel Q] & \stackrel{\text{\ref{eq:divergence_dual_divergence}}}{=} & D_{\varphi^{*}}[Q_{\mu}^{*}\parallel P_{\mu}^{*}]\\
 & \stackrel{\mathrm{def}}{=} & D_{\psi}[Q_{\eta}\parallel P_{\eta}]\\
 & \stackrel{\text{\ref{eq:lem:3.14:1}}}{=} & D_{\mathrm{KL}}[P\parallel Q]
\end{eqnarray*}
\end{proof}
\begin{lem}
\label{lem:4.8}Let $\psi$ be the cumulant generating function of
an exponential family \emph{$(X,\,\mathcal{P}_{\eta})$}. Then the
Bregman divergence $D_{\varphi}$ of the Legendre dual function $\varphi=\psi^{*}$,
induces a Riemannian metric, which is given by the inverse Fisher
information: 
\begin{equation}
g_{P,\varphi}=\mathrm{I}[P\mid X]^{-1}\label{eq:expfam:dualmetric}
\end{equation}
\end{lem}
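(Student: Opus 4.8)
The plan is to reduce the statement to the classical fact that the Hessians of a pair of Legendre-dual convex potentials are mutually inverse, and then to substitute the identification of $\nabla_\eta^2\psi$ with the Fisher information already obtained in Lemma \ref{lem:4.3}. First I would record that, exactly as in the computation of $g_{P,\psi}=\nabla_\eta^2\psi(\boldsymbol{\eta}_P)$ in Lemma \ref{lem:4.3}, the Riemannian metric induced by a Bregman divergence is the Hessian of its generating potential taken in that potential's own coordinates. For $D_\varphi$ this reads $g_{P,\varphi}=\nabla_\mu^2\varphi(\boldsymbol{\mu}_P)$, where $\mu$ is the expectation parametrisation. By Lemma \ref{lem:4.6}, equations \ref{eq:expfam:dualparam} and \ref{eq:expfam:dualfunc}, we have $\boldsymbol{\mu}_P=\boldsymbol{\eta}_P^{*}$ and $\varphi=\psi^{*}$, so it suffices to show that $\nabla_{\eta^*}^2\psi^{*}(\boldsymbol{\eta}_P^{*})=\big(\nabla_\eta^2\psi(\boldsymbol{\eta}_P)\big)^{-1}$.

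The core step is this inverse-Hessian relation. Since Lemma \ref{lem:4.1} gives $\nabla_\eta^2\psi=\mathrm{Var}_\eta[\boldsymbol{x}]$, which is positive definite, the potential $\psi$ is strictly convex and the gradient map $\boldsymbol{\eta}\mapsto\boldsymbol{\eta}^{*}=\nabla_\eta\psi(\boldsymbol{\eta})$ is a diffeomorphism onto its image, whose inverse is the gradient of the dual potential, $\boldsymbol{\eta}^{*}\mapsto\nabla_{\eta^*}\psi^{*}(\boldsymbol{\eta}^{*})=\boldsymbol{\eta}$. Differentiating the identity $\boldsymbol{\eta}=\nabla_{\eta^*}\psi^{*}\big(\nabla_\eta\psi(\boldsymbol{\eta})\big)$ with respect to $\boldsymbol{\eta}$ and applying the chain rule yields $\mathrm{Id}=\nabla_{\eta^*}^2\psi^{*}(\boldsymbol{\eta}_P^{*})\,\nabla_\eta^2\psi(\boldsymbol{\eta}_P)$, which is precisely the required relation.

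Finally I would substitute Lemma \ref{lem:4.3}, equation \ref{eq:expfam:metric}, namely $\nabla_\eta^2\psi(\boldsymbol{\eta}_P)=g_{P,\psi}=\mathrm{I}[P\mid X]$, to conclude that $g_{P,\varphi}=\nabla_{\eta^*}^2\psi^{*}(\boldsymbol{\eta}_P^{*})=\mathrm{I}[P\mid X]^{-1}$, which is equation \ref{eq:expfam:dualmetric}. The only point that genuinely requires care is the legitimacy of the inverse-function and chain-rule argument in the second step; this rests entirely on the positive-definiteness of $\nabla_\eta^2\psi$ established in Lemma \ref{lem:4.1}, which guarantees both the strict convexity of $\psi$ and the smooth invertibility of the gradient map. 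Granting that, the remainder of the argument is a routine identification of matrices.
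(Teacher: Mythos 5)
Your proof is correct, and it is self-contained where the paper's is not. The paper disposes of this lemma in two lines: it invokes an external duality theorem (cited as theorem \ref{prop:3.2}, which belongs to the companion framework papers rather than to this article) asserting that the Bregman divergence of the Legendre dual potential induces the dual Riemannian metric, which is \emph{by definition} the inverse of the primal one; it then substitutes $g_{P,\psi}=\mathrm{I}[P\mid X]$ from Lemma \ref{lem:4.3}. You instead prove the inverse relation outright: positive definiteness of $\nabla_{\eta}^{2}\psi$ from Lemma \ref{lem:4.1} makes the gradient map $\boldsymbol{\eta}\mapsto\nabla_{\eta}\psi(\boldsymbol{\eta})$ a diffeomorphism whose inverse is $\nabla_{\eta^{*}}\psi^{*}$, and differentiating the identity $\boldsymbol{\eta}=\nabla_{\eta^{*}}\psi^{*}\left(\nabla_{\eta}\psi(\boldsymbol{\eta})\right)$ yields $\nabla_{\eta^{*}}^{2}\psi^{*}\,\nabla_{\eta}^{2}\psi=\mathrm{Id}$; the final substitution of Lemma \ref{lem:4.3} is then identical in both arguments. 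What the paper's route buys is brevity and placement of the result inside the general dually flat formalism, where the inverse relation holds for any regular Bregman divergence, not merely for cumulant generating functions; what your route buys is verifiability within this document, since the reference to theorem \ref{prop:3.2} is dangling here and your chain-rule derivation is the only complete justification of the inverse relation available to a reader of this paper. One convention both arguments share, and which you would do well to state explicitly, is that the equality $g_{P,\varphi}=g_{P,\psi}^{-1}$ compares the matrix of the dual metric written in $\mu$-coordinates with the matrix of the primal metric written in $\eta$-coordinates (as in Lemma \ref{lem:4.6}, equations \ref{eq:expfam:dualparam} and \ref{eq:expfam:dualfunc}); this is the standard identification in information geometry, and it is exactly what your Hessian computation delivers.
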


\begin{proof}
By applying theorem \ref{prop:3.2} the Bregman divergence $D_{\varphi}=D_{\psi^{*}}$
induces the dual Riemannian metric $g_{P,\psi}^{*}$, which by definition
is inverse to the Riemannian metric, induced by $D_{\psi}$, such
that $g_{P,\varphi}=g_{P,\psi}^{-1},\,\forall P\in\mathcal{P}$. From
lemma \ref{lem:4.3} it follows, that $g_{P,\varphi}=\mathrm{I}[P\mid X]^{-1}.$
\end{proof}
\begin{lem}
\label{lem:4.9}Let $\psi$ be the cumulant generating function of
an exponential family $(X,\,\mathcal{P}_{\eta})$ and $(X,\,\mathcal{P},\,D_{\varphi})$
the Riemannian statistical manifold, given by the Bregman divergence
$D_{\varphi}$of the Legendre dual function $\varphi=\psi^{*}$. Then
the $\mu$-affine geodesics in $(X,\,\mathcal{P}_{\mu},\,D_{\psi})$
are given by mixture families.
\end{lem}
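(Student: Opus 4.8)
The plan is to mirror the proof of Lemma~\ref{lem:4.4} in the dual coordinate system, exploiting the fact that the expectation parametrisation enters the probability measure linearly, just as the natural parametrisation entered the log-density linearly. The $\mu$-affine geodesics in $(X,\,\mathcal{P}_{\mu},\,D_{\varphi})$ are by definition the affine linear curves in the $\mu$-parametrisation. For $P,\,Q\in\mathcal{P}$ I would therefore set $\boldsymbol{\mu}_{P,Q}(t)=(1-t)\boldsymbol{\mu}_{P}+t\boldsymbol{\mu}_{Q}$ with $t\in[0,\,1]$ and let $\gamma_{P,Q}(t)$ denote the $\mu$-affine geodesic connecting $P$ with $Q$.

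First I would invoke Lemma~\ref{lem:4.6}, by which $\mu=\eta^{*}$ is the expectation parametrisation, so that $\boldsymbol{\mu}_{P}=\mathrm{E}_{P}[\boldsymbol{x}]$ is a linear functional of the underlying measure. By the canonical form of a mixture family, equation~\ref{eq:mixfam:canform}, a point on the geodesic depends linearly on its expectation parameter, $\gamma_{P,Q}(t)[\sigma]=\boldsymbol{\mu}_{P,Q}(t)\cdot\boldsymbol{Q}[\sigma]-\varphi(\boldsymbol{\mu}_{P,Q}(t))[\sigma]$. Substituting the affine curve $\boldsymbol{\mu}_{P,Q}(t)$ and distributing over the linear terms, the key step is to confirm that the restriction term $\varphi(\boldsymbol{\mu}_{P,Q}(t))[\sigma]$ also combines affinely in $t$; this follows from the normalisation constraint $\sum_{i}w_{i}=1$ built into equation~\ref{eq:mixfam:rest}, which is preserved under affine combinations of admissible parameter vectors.

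Collecting the terms then yields $\gamma_{P,Q}(t)[\sigma]=(1-t)\,P[\sigma]+t\,Q[\sigma]$ for all $\sigma\in\Sigma_{X}$, i.e. the $\mu$-affine geodesic is realised as the arithmetic mixture of the measures $P$ and $Q$. This is precisely the defining form of a mixture family, equation~\ref{eq:mixfam:stanform}, with the two components $P,\,Q$ and mixing weights $1-t$ and $t$. Letting $P,\,Q\in\mathcal{P}$ vary then produces a generic representation of the $\mu$-affine geodesics as mixture families with respect to the curve parameter $t$, which is the exact dual counterpart of the exponential-family representation obtained in Lemma~\ref{lem:4.4}.

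I expect the main obstacle to be the rigorous identification of the $\mu$-affine geodesic with the arithmetic mixture: one must be sure that affine motion in the expectation coordinate $\mu$ corresponds exactly to linear mixing of the measures, and in particular that the restriction term does not spoil this linearity. This is the genuine dual of the log-linearity used in Lemma~\ref{lem:4.4}, where the geometric (log-linear) mixture produced exponential families, whereas here the arithmetic (linear) mixture produces mixture families. A subtle point worth flagging is that, in contrast to the $\eta$-affine geodesic, the arithmetic mixture generally leaves the exponential family $\mathcal{P}$, reflecting that exponential families are flat in the primal ($\eta$) affine structure but not in the dual ($\mu$) affine structure; the geodesic must therefore be read as a one-parameter mixture family in its own right rather than as a curve lying inside $\mathcal{P}$.
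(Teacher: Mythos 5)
Your proposal is correct and takes essentially the same route as the paper: both proofs identify the $\mu$-affine geodesic $\boldsymbol{\mu}_{P,Q}(t)=(1-t)\boldsymbol{\mu}_{P}+t\boldsymbol{\mu}_{Q}$ with the arithmetic mixture, so that the density along the geodesic is $(1-t)p(\boldsymbol{x})+t q(\boldsymbol{x})$, and then read this off as a mixture family in the curve parameter $t$ (the paper asserts this directly at the density level, whereas you detour through equation \ref{eq:mixfam:canform}, but the key identification is identical). Your closing caveat, that the arithmetic mixture generally leaves $\mathcal{P}$ so the geodesic must be read as a one-parameter mixture family in its own right, is a subtlety the paper's proof passes over silently.
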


\begin{proof}
$\mu$-affine geodesics in $(X,\,\mathcal{P}_{\mu},\,D_{\psi})$ are
given by affine linear curves in the $\mu$-parametrisation. For $P,\,Q\in\mathcal{P}$
and $\boldsymbol{\mu}_{P,Q}(t)=(1-t)\boldsymbol{\mu}_{P}+t\boldsymbol{\mu}_{Q}$,
with $t\in[0,\,1]$ let $\gamma_{P,Q}(t)$ be the $\mu$-affine geodesic
connecting $P$ with $Q$ and $\mathrm{d}_{x}\gamma_{P,Q}(t)$ the
probability density of $\gamma_{P,Q}(t)$ over $(X,\,\Sigma_{X})$,
then:
\[
\mathrm{d}_{X}\gamma_{P,Q}(t)(\boldsymbol{x})=(1-t){}_{\boldsymbol{\mu}}(\boldsymbol{x})+tq_{\boldsymbol{\mu}}(\boldsymbol{x})
\]
An integration over $\sigma$ yields:
\[
\gamma_{P,Q}(t)=\int_{\sigma}\left((1-t)p(\boldsymbol{x})+tq(\boldsymbol{x})\right)\mathrm{d}\boldsymbol{x}
\]
This is a mixture of probability distributions with respect to a mixing
parameter $t$. 
\end{proof}
\begin{lem}
\label{lem:4.10}Let $\psi$ be the cumulant generating function of
an exponential family $(X,\,\mathcal{P}_{\eta})$. Then the $\mu$-affine
geodesics are flat with regard to the Riemannian metric, induced by
the Bregman divergence $D_{\varphi}$of the Legendre dual function
$\varphi=\psi^{*}$.
\end{lem}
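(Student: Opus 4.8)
The plan is to obtain Lemma \ref{lem:4.10} as the Legendre dual of Lemma \ref{lem:4.5}, exploiting the symmetry between the pairs $(\psi,\eta)$ and $(\varphi,\mu)$ that was established in Lemma \ref{lem:4.6}. First I would record that $\varphi=\psi^{*}$ is itself a strictly convex potential: by Lemma \ref{lem:4.8} its Hessian in the $\mu$-parametrisation equals the inverse Fisher information $g_{P,\varphi}=\mathrm{I}[P\mid X]^{-1}$, which is positive definite. Hence $D_{\varphi}$ is a bona fide regular Bregman divergence in the affine coordinates $\mu$, and $(X,\,\mathcal{P}_{\mu},\,D_{\varphi})$ is a dually flat statistical manifold in exactly the same formal sense that $(X,\,\mathcal{P}_{\eta},\,D_{\psi})$ is, with the roles of $\psi$, $\eta$ and $g_{P,\psi}$ taken over by $\varphi$, $\mu$ and $g_{P,\varphi}$.

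The concrete mechanism behind this symmetry is that the connection induced by $D_{\varphi}$ is flat in the $\mu$-chart. I would exhibit this by the coordinate computation of the divergence-induced Christoffel symbols $\Gamma_{ij,k}(\boldsymbol{\mu})=-\partial_{i}\partial_{j}\partial_{k'}D_{\varphi}[P_{\mu}\parallel P_{\mu'}]|_{\mu'=\mu}$, where $\partial_{i},\partial_{j}$ act on the first and $\partial_{k'}$ on the second argument. Writing the Bregman form $D_{\varphi}(\boldsymbol{\mu}_{P},\boldsymbol{\mu}_{Q})=\varphi(\boldsymbol{\mu}_{P})-\varphi(\boldsymbol{\mu}_{Q})-\nabla_{\mu}\varphi(\boldsymbol{\mu}_{Q})\cdot(\boldsymbol{\mu}_{P}-\boldsymbol{\mu}_{Q})$, the dependence on the first argument is $\varphi(\boldsymbol{\mu}_{P})$ plus a term linear in $\boldsymbol{\mu}_{P}$; differentiating twice in $\boldsymbol{\mu}_{P}$ leaves only $\nabla_{\mu}^{2}\varphi(\boldsymbol{\mu}_{P})$, which is independent of $\boldsymbol{\mu}_{Q}$, so the further derivative $\partial_{k'}$ annihilates it and every $\Gamma_{ij,k}$ vanishes. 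Thus $\boldsymbol{\mu}$ is an affine coordinate system for the $D_{\varphi}$-connection, its curvature is identically zero, and the manifold is flat with respect to $g_{P,\varphi}$.

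It then remains to identify the $\mu$-affine geodesics with the geodesics of this flat connection. Since the Christoffel symbols vanish in $\mu$-coordinates, the geodesic equation reduces to $\ddot{\boldsymbol{\mu}}=0$, whose solutions are precisely the affine segments $\boldsymbol{\mu}_{P,Q}(t)=(1-t)\boldsymbol{\mu}_{P}+t\boldsymbol{\mu}_{Q}$ of Lemma \ref{lem:4.9}; by that lemma these are exactly the mixture families. Hence the $\mu$-affine geodesics coincide with the geodesics of the flat $D_{\varphi}$-connection, which establishes the claim. Alternatively, and more in keeping with the style of Lemmas \ref{lem:4.7} and \ref{lem:4.8}, the statement can be read off directly from Lemma \ref{lem:4.5} by applying \ref{prop:3.2} together with the dual-divergence identity \ref{eq:divergence_dual_divergence} of Lemma \ref{lem:3.9}, which interchanges $(\psi,\eta,g_{P,\psi})$ with $(\varphi,\mu,g_{P,\varphi})$ and transports the flatness of the $\eta$-affine geodesics verbatim to the $\mu$-affine geodesics.

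The main obstacle is not any single calculation but the bookkeeping of the dualistic structure. One must keep straight that it is the primal $D_{\varphi}$-connection, and not its dual, that is flat in the $\mu$-chart; one must verify that the convexity of $\varphi$ needed for $D_{\varphi}$ to qualify as a genuine Bregman divergence is actually available, which it is through the positive definiteness of $g_{P,\varphi}$ in Lemma \ref{lem:4.8}; and one must confirm that the coordinate-affine segments of Lemma \ref{lem:4.9} are the autoparallel curves of this flat affine connection rather than geodesics of the metric Levi-Civita connection, the two coinciding only because the induced connection is the flat one dual-compatible with $g_{P,\varphi}$.
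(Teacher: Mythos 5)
The paper itself states Lemma \ref{lem:4.10} without any proof (just as it does its primal counterpart, Lemma \ref{lem:4.5}): the lemma is immediately followed by the next section, so there is no proof of record to compare against, and your argument fills a genuine gap in the text. On its own merits the proof is correct and fits the paper's framework. The positive definiteness of $\nabla_{\mu}^{2}\varphi=\mathrm{I}[P\mid X]^{-1}$ from Lemma \ref{lem:4.8} makes $D_{\varphi}$ a regular Bregman divergence in the $\mu$-chart; the Eguchi-type computation $\Gamma_{ij,k}=-\partial_{i}\partial_{j}\partial_{k'}D_{\varphi}\vert_{\mu'=\mu}=0$ works because the first-argument Hessian of a Bregman divergence equals $\nabla_{\mu}^{2}\varphi(\boldsymbol{\mu}_{P})$, which is independent of the second argument, so $\boldsymbol{\mu}$ is an affine coordinate system for the induced connection; and the geodesic equation $\ddot{\boldsymbol{\mu}}=0$ then identifies the autoparallel curves with the affine segments of Lemma \ref{lem:4.9}, i.e.\ with the mixture families. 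Your alternative route, dualising Lemma \ref{lem:4.5} via the external duality results (the paper's unresolved cross-references to proposition 3.2 and lemma 3.9 of the companion work), is also consistent with how the paper itself argues in Lemmas \ref{lem:4.7} and \ref{lem:4.8}.

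One caution on your closing remarks: the suggestion that the $\mu$-affine autoparallels and the Levi-Civita geodesics of $g_{P,\varphi}$ coincide is not correct in general. In a dually flat manifold the Levi-Civita connection is the average of the two mutually dual flat connections, and its geodesics agree with neither the $\eta$-segments nor the $\mu$-segments. The lemma's phrase ``flat with regard to the Riemannian metric'' should be read as flatness of the divergence-induced (dual) affine connection compatible with that metric --- which is exactly what your Christoffel computation establishes --- not as a statement about the metric connection itself; this slip is confined to your final paragraph and does not affect the body of the proof.
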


\section{Dually flat structure}
\begin{thm}[\emph{Structure of Exponential Families}]
\label{thm:Structure-of-exponential-families}\emph{} Let $(X,\,\mathcal{P})$
be an exponential family. Then there exists a Bregman divergence $D_{\psi}$
such that $(X,\,\mathcal{P_{\eta}},\,D_{\psi})$ is a dually flat
statistical manifold with respect to the Riemannian metric, induced
by $D_{\psi}$.
\end{thm}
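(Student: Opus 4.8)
The plan is to assemble the results of Lemmas \ref{lem:4.1}--\ref{lem:4.10} into the three ingredients that constitute a dually flat statistical manifold: a Riemannian metric, a pair of flat affine connections $\nabla^{(e)}$ and $\nabla^{(m)}$, and the mutual duality of these connections with respect to the metric. The strategy is therefore not to derive anything new, but to exhibit the exponential-family setting as a concrete realization of the abstract Bregman/Legendre structure, collecting the already-established pieces under the definition of dual flatness.

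First I would fix the potential. Every exponential family admits a canonical form \ref{eq:expfamily:canonicalform} with cumulant generating function $\psi$, which by Lemma \ref{lem:4.1} is strictly convex, since its Hessian $\nabla_\eta^2\psi = \mathrm{Var}_\eta[\boldsymbol{x}]$ is positive definite. Hence the Bregman divergence $D_\psi$ is well defined, and by Lemma \ref{lem:4.3} it induces the Fisher information $g_{P,\psi} = \mathrm{I}[P\mid X]$ as a positive-definite Riemannian metric. The natural parameters $\eta$ then serve as a global affine chart for a connection $\nabla^{(e)}$ whose geodesics are the straight lines in $\eta$; by Lemma \ref{lem:4.4} these geodesics are themselves exponential families, and by Lemma \ref{lem:4.5} they are flat with respect to the Fisher metric. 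Therefore $\nabla^{(e)}$ has vanishing curvature and $(X,\mathcal{P}_\eta)$ is $\nabla^{(e)}$-flat with $\eta$ as a global affine coordinate system.

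Next I would produce the dual flat structure by Legendre transformation. By Lemma \ref{lem:4.6} the conjugate potential is $\varphi = \psi^*$, the negative entropy, and the dual coordinates are $\boldsymbol{\eta}^* = \boldsymbol{\mu}$, the expectation parameters given by $\boldsymbol{\mu}_P = \nabla_\eta\psi(\boldsymbol{\eta}_P)$. Strict convexity of $\psi$ guarantees that $\nabla_\eta\psi$ is a diffeomorphism, so $\eta\mapsto\mu$ is a genuine global change of coordinates and $\mu$ is itself a global affine chart. By Lemma \ref{lem:4.8} the divergence $D_\varphi$ induces the dual metric $g_{P,\varphi} = \mathrm{I}[P\mid X]^{-1}$, while Lemmas \ref{lem:4.9} and \ref{lem:4.10} show that the $\mu$-affine geodesics are mixture families that are flat with respect to this metric. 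Hence the induced connection $\nabla^{(m)}$ is flat and $(X,\mathcal{P}_\mu)$ is $\nabla^{(m)}$-flat with $\mu$ as a global affine coordinate system.

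The remaining and genuinely structural step is the mutual duality of $\nabla^{(e)}$ and $\nabla^{(m)}$ with respect to $g$; everything above is assembly, but here I would invoke the general Bregman/Legendre machinery of Lemma \ref{lem:3.9} and Theorem \ref{prop:3.2}. Because $\eta$ and $\mu$ are Legendre-conjugate coordinate systems related by $\mu = \nabla_\eta\psi$, the metric transforms under the two charts by the mutually inverse Jacobians $\nabla_\eta^2\psi$ and $\nabla_\mu^2\varphi$ (Lemma \ref{lem:4.8}), so the two coordinate bases are biorthogonal, $g\!\left(\partial_{\eta_i},\partial_{\mu_j}\right) = \delta_{ij}$. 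This biorthogonality is precisely the statement that $\nabla^{(e)}$ and $\nabla^{(m)}$ are dual connections. I expect this duality to be the main obstacle to make fully rigorous, since it is the only point at which the two affine structures must be related to one another rather than handled in isolation; once it is established, the triple $(g,\nabla^{(e)},\nabla^{(m)})$ satisfies the definition of a dually flat manifold and the theorem follows.
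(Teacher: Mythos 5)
Your proposal is correct and follows essentially the same route as the paper's own proof: both assemble Lemma \ref{lem:4.1} (convexity of $\psi$), Lemmas \ref{lem:4.3} and \ref{lem:4.5} for the primary ($\eta$-affine) flat structure, and Lemmas \ref{lem:4.6}, \ref{lem:4.8} and \ref{lem:4.10} for the dual ($\mu$-affine) flat structure, then conclude by the general criterion for dual flatness. The only difference is at the final step, where the paper simply cites its external lemma \ref{lem:3.10} from the companion work, whereas you spell out what that criterion amounts to --- the biorthogonality $g(\partial_{\eta_i},\partial_{\mu_j})=\delta_{ij}$ of the Legendre-conjugate charts --- which is a faithful reconstruction of the delegated step rather than a different argument.
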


\begin{proof}
Let $\eta$ be the natural parametrisation and $\psi$ the cumulant
generating function of $(X,\,\mathcal{P})$. Let further be $D_{\psi}$
the Bregman divergence over $(X,\,\mathcal{P})$ with respect to $\psi$,
then $(X,\,\mathcal{P_{\eta}},\,D_{\psi})$ is a Riemannian statistical
manifold with a Bregman divergence $D_{\psi}$. By lemma \ref{lem:4.3}
it follows that $\eta$ is an affine parametrisation and by lemma
\ref{lem:4.5} that the $\eta$-affine geodesics are flat with respect
to the Riemannian metric, induced by $D_{\psi}$. Let $\mu$ be the
expectation parametrisation of $(X,\,\mathcal{P})$ with respect to
$\eta$, then by lemma \ref{lem:4.1} it follows, that $\mu=\eta^{*}$,
by lemma \ref{lem:4.8}, that $\mu$ is an affine parametrisation
of $(X,\,\mathcal{P},\,D_{\psi^{*}})$ and by lemma \ref{lem:4.10}
that the $\mu$-affine geodesics are flat with respect to the Riemannian
metric, induced by $D_{\varphi}$, where $\varphi=\psi^{*}$. Therefore
the conditions of lemma \ref{lem:3.10} are satisfied and $(X,\,\mathcal{P_{\xi}},\,D_{\psi})$
is a dually flat statistical manifold.
\end{proof}
Together the primary and dual affine structure of an exponential family
induce a dually flat structure, which is characterized by the $\eta$-affine
and $\mu$-affine geodesics, with respect to the Fisher information
metric and its dual metric. Thereby the $\eta$-affine geodesics are
represented by exponential families over the curve parameter $t$
an the $\mu$-affine geodesics by mixture families. This relationship
allows a characterisation of the intrinsic dually flat structure,
which is independent of its parametrisation. This structure is given
by an \emph{$e$-affine structure}, that preserves the exponential
family representation within its primary affine structure and an \emph{$m$-affine
structure} that preserves the dual representation within the dual
affine structure. Therefore geodesics and geodesic projections in
the $e$-affine structure are respectively termed \emph{$e$-geodesics},
denoted by $\gamma_{e}$ and \emph{$e$-affine projections}, denoted
by $\pi_{e}$. Furthermore the geodesics and geodesic projections
in the $m$-affine structure are respectively termed \emph{$m$-geodesics},
denoted by $\gamma_{m}$ and \emph{$m$-affine projections}, denoted
by $\pi_{m}$. With respect to submanifolds, a smooth submanifold
is termed \emph{$e$-flat} if it has a linear embedding within the
$e$-affine structure and \emph{$m$-flat} if it has a linear embedding
within the $m$-affine structure. 
\begin{cor}
\label{cor:4.1}Let $(X,\,\mathcal{P})$ be an exponential family
and $P\in\mathcal{P}$. Then the geodesic projection of $P$ to an
$m$-flat submanifold is uniquely given by an $e$-affine projection
$\pi_{e}$ and the geodesic projection of $P$ to an $e$-flat submanifold
is uniquely given by an $m$-affine projection $\pi_{m}$. 
\begin{figure}[h]
\begin{centering}
\def\svgwidth{\columnwidth} 
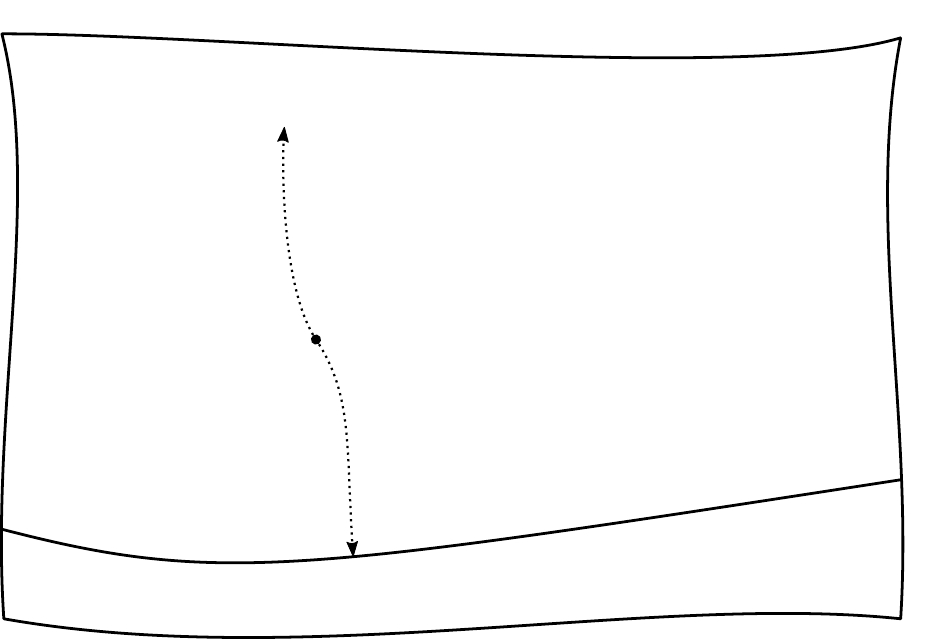
\par\end{centering}
\caption{\label{fig:cor:3.3}Unique projection in Exponential Families}
\end{figure}
\end{cor}

\begin{proof}
Let $(X,\,\mathcal{P})$ be an exponential family. Then theorem \ref{thm:Structure-of-exponential-families}
states, that the structure of $(X,\,\mathcal{P})$ is given by a dually
flat statistical manifold $(X,\,\mathcal{P}_{\eta},\,D_{\psi})$,
where the Riemannian metric is induced by a Bregman divergence $D_{\psi}$
with respect to the natural parametrisation $\eta$. Let $(X,\,\mathcal{Q})$
be an $m$-flat submanifold, then $(X,\,\mathcal{Q})$ is flat with
regard to the Riemannian metric, induced by the expectation parametrisation
and therefore flat with respect to $D_{\psi^{*}}$. Conversely let
$(X,\,\mathcal{S})$ be an $e$-flat submanifold, then $(X,\,\mathcal{Q})$
is flat with regard to the Riemannian metric, induced by the natural
parametrisation and therefore flat with respect to $D_{\psi}$. Therefore
the conditions of corollary \ref{cor:3.2} are satisfied, which proves
the corollary.
\end{proof}

\section{\label{subsec:Maximum-Entropy-Estimation}Maximum Entropy Estimation
in Exponential Families}

The sample space $(X,\,\Sigma)$ of a statistical model $(X,\,\mathcal{P})$
is generated by a statistic $T$, that induces a probability distribution
$P_{X}$ from the probability space of an underlying statistical population
$(\Omega,\,\mathcal{F},\,P)$. To this end also the sample space may
be regarded as a probability space, whereat the occurrence of the
probability distribution $P_{X}$ is hypothetical.
\begin{lem}
\label{lem:4.11}Let $(X,\,\mathcal{P}_{\eta})$ be an exponential
family over a sample space $(X,\,\Sigma)$, and $P_{U}$ the uniform
distribution over $(X,\,\Sigma)$. Then $P_{U}\in\mathcal{P}$.
\end{lem}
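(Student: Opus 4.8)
The plan is to identify the uniform distribution $P_{U}$ with the distinguished member of the family obtained by setting the natural parameter to zero. Recall that in the canonical form \ref{eq:expfamily:canonicalform} the density of $P_{\eta}$ reads $p_{\eta}(\boldsymbol{x})=\exp(\boldsymbol{\eta}_{P}\cdot\boldsymbol{x}-\psi(\boldsymbol{\eta}_{P}))$. The natural candidate is therefore $\boldsymbol{\eta}_{P}=\boldsymbol{0}$, since the linear form $\boldsymbol{\eta}_{P}\cdot\boldsymbol{x}$ then vanishes and the density ceases to depend on $\boldsymbol{x}$. So the whole argument reduces to showing that this zero-parameter member is precisely $P_{U}$ and that it indeed belongs to the family.

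First I would evaluate the density at $\boldsymbol{\eta}_{P}=\boldsymbol{0}$ explicitly. Substituting into \ref{eq:expfamily:canonicalform} gives $p_{0}(\boldsymbol{x})=\exp(-\psi(\boldsymbol{0}))$, which is constant in $\boldsymbol{x}$. Using the cumulant representation \ref{eq:expfam:func} one obtains $\psi(\boldsymbol{0})=\log\int_{X}\exp(\boldsymbol{0}\cdot\boldsymbol{x})\mathrm{d}\boldsymbol{x}=\log\int_{X}\mathrm{d}\boldsymbol{x}$, so that $p_{0}(\boldsymbol{x})=\left(\int_{X}\mathrm{d}\boldsymbol{x}\right)^{-1}$. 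A density that is constant over $(X,\,\Sigma)$ and normalised by the total measure of the sample space is by definition the density of the uniform distribution, whence $P_{0}=P_{U}$. This step is a routine substitution once the candidate parameter has been identified.

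The one point that requires care --- and the main obstacle --- is to guarantee that $\boldsymbol{0}$ actually lies in $\mathrm{dom}\,\eta$, so that $P_{0}$ is a genuine member of $\mathcal{P}$ rather than a merely formal expression. This is equivalent to the finiteness of $\psi(\boldsymbol{0})=\log\int_{X}\mathrm{d}\boldsymbol{x}$, that is, to the total measure of $(X,\,\Sigma)$ being finite. But this is exactly the condition under which a uniform distribution exists at all: the hypothesis that $P_{U}$ is a well-defined uniform distribution over $(X,\,\Sigma)$ already forces $\int_{X}\mathrm{d}\boldsymbol{x}<\infty$. Hence the assumption of the lemma supplies precisely the integrability needed to place $\boldsymbol{0}$ in the parameter domain, and $P_{U}=P_{0}\in\mathcal{P}$ follows. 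I expect no further subtlety beyond this domain check; the remainder is immediate from the canonical form.
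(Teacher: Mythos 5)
Your proposal is correct and follows essentially the same route as the paper: substitute $\boldsymbol{\eta}_{P}=\boldsymbol{0}$ into the canonical form, evaluate $\psi(\boldsymbol{0})=\log\int_{X}\mathrm{d}\mu(\boldsymbol{x})$ via \ref{eq:expfam:func}, and recognise $\mu(X)^{-1}$ as the uniform density. Your added check that $\boldsymbol{0}\in\mathrm{dom}\,\eta$ (equivalently $\mu(X)<\infty$, which the existence of $P_{U}$ already guarantees) is a point the paper's proof passes over silently --- strictly speaking both arguments tacitly assume the family is full, i.e.\ that its parameter domain is the whole natural parameter space rather than a proper subset excluding $\boldsymbol{0}$ --- but this is a refinement, not a different approach.
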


\begin{proof}
Let $\mu$ be a $\sigma$-finite reference measure over $(X,\,\Sigma)$,
then the probability density $u$ of the uniform distribution is given
by:
\[
u_{X}(x)\coloneqq\begin{cases}
\frac{1}{\mu(X)} & x\in X\\
0 & x\notin X
\end{cases}
\]
Let $\eta$ be the canonical parametrisation of $(X,\,\mathcal{P})$,
then the densities of any $P\in\mathcal{P}$ may be written as:
\[
p_{\eta}(\boldsymbol{x})=\exp\left(\boldsymbol{\eta}_{P}\cdot\boldsymbol{x}-\psi(\boldsymbol{\eta}_{P})\right)
\]
Where the cumulative generating function $\psi(\boldsymbol{\eta}_{P})$
is given by: 
\[
\psi(\boldsymbol{\eta}_{P})=\log\int_{X}\exp(\boldsymbol{\eta}_{P}\cdot\boldsymbol{x})\mathrm{d}\mu(\boldsymbol{x})
\]
Let $\boldsymbol{\eta}_{P}=0$, then:
\begin{eqnarray*}
p_{\eta}(\boldsymbol{x}) & = & \exp\left(0\cdot\boldsymbol{x}-\psi(0)\right)\\
 & = & \exp\left(\log\int_{X}\mathrm{d}\mu(\boldsymbol{x})\right)^{-1}\\
 & = & \mu(X)^{-1}
\end{eqnarray*}
This is the density of the uniform distribution over $(X,\,\Sigma)$.
\end{proof}
\begin{lem}
\label{lem:4.12}Let $(X,\,\Sigma)$ be a measurable space and $P_{U}$
the uniform distribution over $(X,\,\Sigma)$. Then let $P$ be an
arbitrary probability distribution over $(X,\,\Sigma)$, then:
\[
\mathrm{H}[P_{U}\mid X]\geq\mathcal{\mathrm{H}}[P\mid X]
\]
\end{lem}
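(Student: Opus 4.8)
The plan is to recognize that the entropy gap $\mathrm{H}[P_U \mid X] - \mathrm{H}[P \mid X]$ is exactly a Kullback--Leibler divergence between $P$ and $P_U$, and then to conclude from the non-negativity of that divergence (Gibbs' inequality). First I would fix a $\sigma$-finite reference measure $\mu$ over $(X,\,\Sigma)$ as in lemma \ref{lem:4.11}, write $p$ for the $\mu$-density of $P$, and recall that the uniform density is the constant $u = 1/\mu(X)$; this tacitly requires $0 < \mu(X) < \infty$, which is precisely the regime in which $P_U$ is defined. A direct computation then gives the entropy of the uniform distribution, since $u$ is constant:
\[
\mathrm{H}[P_U \mid X] = -\int_X u \log u \, \mathrm{d}\mu = \log \mu(X).
\]

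Second, I would form the relative entropy and separate the integrand, using $\int_X p \, \mathrm{d}\mu = 1$:
\[
D_{\mathrm{KL}}[P \parallel P_U] = \int_X p \log \frac{p}{u} \, \mathrm{d}\mu = -\mathrm{H}[P \mid X] + \log \mu(X) = \mathrm{H}[P_U \mid X] - \mathrm{H}[P \mid X].
\]
Thus the asserted inequality is equivalent to the single statement $D_{\mathrm{KL}}[P \parallel P_U] \geq 0$, and the whole proof reduces to establishing this non-negativity.

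The main work is therefore the last step. I would obtain it by Jensen's inequality applied to the concave logarithm, taking the expectation with respect to $P$ of the random quantity $u/p$:
\[
-D_{\mathrm{KL}}[P \parallel P_U] = \int_X p \log \frac{u}{p} \, \mathrm{d}\mu \leq \log \int_{\{p > 0\}} u \, \mathrm{d}\mu \leq \log 1 = 0,
\]
where the final bound uses that $u$ is a probability density, so its integral over the support $\{p > 0\}$ of $p$ is at most one. The same conclusion is available structurally from the earlier development: by lemma \ref{lem:4.7} the Kullback--Leibler divergence coincides with the Bregman divergence $D_\varphi$ of the convex Legendre dual $\varphi = \psi^{*}$, and Bregman divergences are non-negative by convexity. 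The only genuine subtlety, and the point I would treat with care, is the measure-theoretic bookkeeping on the support of $p$: Jensen's inequality is applied on $\{p > 0\}$, where $u/p$ is defined, and $\int_{\{p > 0\}} u \, \mathrm{d}\mu$ may be strictly below one, which still yields the claim, with equality exactly when $p = u$ holds $\mu$-almost everywhere, that is, when $P = P_U$.
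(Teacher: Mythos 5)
Your proof is correct and takes essentially the same route as the paper: compute $\mathrm{H}[P_{U}\mid X]$ explicitly, identify the entropy gap $\mathrm{H}[P_{U}\mid X]-\mathrm{H}[P\mid X]$ with $D_{\mathrm{KL}}[P\parallel P_{U}]$, and conclude from the non-negativity of the Kullback--Leibler divergence. The only differences are that you actually establish that non-negativity via Jensen's inequality with careful handling of the support of $p$, whereas the paper simply asserts $D_{\mathrm{KL}}\geq0$ (and your sign $\log\mu(X)$ corrects the paper's $\log\mu(X)^{-1}$); note, however, that your secondary structural remark invoking lemma \ref{lem:4.7} would not apply here, since $P$ is an arbitrary distribution and need not lie in the exponential family.
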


\begin{proof}
Let $u_{X}$ be the probability density of $P_{U}$, then the entropy
of $P_{U}$ over $(X,\,\Sigma)$ is:
\begin{eqnarray*}
\mathrm{H}[P_{U}\mid X] & = & -\int_{X}u_{X}(x)\log u_{X}(x)\mathrm{d}\mu(x)\\
 & = & -\int_{X}\mu(X)^{-1}\log\mu(X)^{-1}\mathrm{d}\mu(x)\\
 & = & \log\mu(X)^{-1}
\end{eqnarray*}
Let $P$ be an arbitrary probability distribution over $(X,\,\Sigma)$
with density $p$, then:
\begin{eqnarray*}
D_{\mathrm{KL}}[P\parallel P_{U}] & = & \int_{X}p(x)\log\frac{p(x)}{u_{X}(x)}\mathrm{d}\mu(x)\\
 & = & \int_{X}p(x)\log p(x)\mathrm{d}\mu(x)\\
 &  & -\int_{X}p(x)\log u_{X}(x)\mathrm{d}\mu(x)\\
 & = & -\mathrm{H}[P\mid X]-\log\mu(X)^{-1}\\
 & = & -\mathcal{\mathrm{H}}[P\mid X]+\mathcal{H}[P_{U}\mid X]\geq0
\end{eqnarray*}
Therefore:
\[
\mathcal{H}[P_{U}\mid X]\geq\mathcal{H}[P\mid X],\,\forall P
\]
\end{proof}
\begin{lem}
\label{lem:4.13}Let $(X,\,\mathcal{P})$ be an exponential family
over a sample space $(X,\,\Sigma)$. Then a maximum entropy estimation
of $(X,\,\mathcal{P})$ is given by the uniform probability distribution
over $(X,\,\mathcal{P})$.
\end{lem}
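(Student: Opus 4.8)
The plan is to obtain the result as a direct synthesis of the two immediately preceding lemmas, which together already bracket the claim from inside and outside. First I would invoke Lemma \ref{lem:4.11} to record that the uniform distribution $P_U$ is itself a member of the exponential family, namely the distribution realised by the natural parameter $\boldsymbol{\eta}_P = 0$, whose density is the constant $\mu(X)^{-1}$. This membership is the decisive point: it is what upgrades the generic bound of Lemma \ref{lem:4.12} from an external estimate into an attainable maximiser lying \emph{within} $\mathcal{P}$.

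Next I would apply Lemma \ref{lem:4.12}, which states that $\mathrm{H}[P_U \mid X] \geq \mathrm{H}[P \mid X]$ for \emph{every} probability distribution $P$ over $(X,\,\Sigma)$. Since each $P \in \mathcal{P}$ is in particular such a probability distribution, the inequality holds a fortiori for all members of the exponential family. Combining this with the membership $P_U \in \mathcal{P}$ from the first step, the uniform distribution is simultaneously an element of $\mathcal{P}$ and an upper bound for the entropy over $\mathcal{P}$; hence it attains the supremum of $\mathrm{H}[\,\cdot \mid X]$ on $\mathcal{P}$, which is exactly the assertion that a maximum entropy estimation of $(X,\,\mathcal{P})$ is given by $P_U$.

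The argument is thus a two-line deduction rather than a computation, and I do not anticipate a genuine analytic obstacle, since both inequalities are already supplied. The one point deserving care is a standing hypothesis tacitly used by both antecedent lemmas: the construction of $P_U$ with density $\mu(X)^{-1}$, and the corresponding value $\psi(0) = \log \mu(X)$, presuppose that the reference measure is finite, $\mu(X) < \infty$, so that $P_U$ exists as a bona fide probability measure and the normalising constant is well defined. I would therefore either state this finiteness explicitly or note that it is inherited from Lemma \ref{lem:4.11}, as it is the sole condition under which the statement is meaningful.
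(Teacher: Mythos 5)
Your proposal is correct and follows essentially the same route as the paper's own proof: invoke Lemma \ref{lem:4.11} for membership $P_U \in \mathcal{P}$ and Lemma \ref{lem:4.12} for the entropy bound over all distributions, hence over $\mathcal{P}$. Your additional remark that the finiteness of the reference measure $\mu(X) < \infty$ must hold for $P_U$ to exist is a valid observation the paper leaves implicit, but it does not change the argument.
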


\begin{proof}
From lemma \ref{lem:4.11} it follows, that $P_{U}\in\mathcal{P}$.
Furthermore lemma \ref{lem:4.12} proves that $P_{U}$ maximizes the
entropy among all $P\in\mathcal{P}$.
\end{proof}
\begin{thm}
\label{thm:MEE}Let $(X,\,\mathcal{P})$ be an exponential family
over a sample space $(X,\,\Sigma)$ and $(X,\,\mathcal{Q})$ a smooth
submanifold, then a maximum entropy estimation of $(X,\,\mathcal{Q})$
is given by a geodesic projection of the uniform probability distribution
over $(X,\,\mathcal{P})$ to $(X,\,\mathcal{Q})$.
\end{thm}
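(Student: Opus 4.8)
The plan is to reduce the maximisation of the entropy over the submanifold to the minimisation of a Kullback--Leibler divergence, and then to recognise this minimisation as a geodesic projection inside the dually flat structure established in Theorem~\ref{thm:Structure-of-exponential-families}.

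First I would recall the identity obtained in the proof of Lemma~\ref{lem:4.12}, namely that for any probability distribution $P$ over $(X,\,\Sigma)$
\[
D_{\mathrm{KL}}[P\parallel P_{U}]=\mathrm{H}[P_{U}\mid X]-\mathrm{H}[P\mid X].
\]
Since the entropy $\mathrm{H}[P_{U}\mid X]$ of the uniform distribution is a constant independent of $P$, maximising $\mathrm{H}[P\mid X]$ over $P\in\mathcal{Q}$ is equivalent to minimising $D_{\mathrm{KL}}[P\parallel P_{U}]$ over $P\in\mathcal{Q}$. By Lemma~\ref{lem:4.11} the uniform distribution $P_{U}$ belongs to the exponential family $\mathcal{P}$, hence it is a point of the ambient manifold and the minimisation is a well-posed projection problem onto the submanifold $(X,\,\mathcal{Q})$.

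Next I would invoke Lemma~\ref{lem:4.7} to rewrite the objective as a Bregman divergence, $D_{\mathrm{KL}}[P\parallel P_{U}]=D_{\varphi}[P\parallel P_{U}]$, where $\varphi=\psi^{*}$ is the Legendre dual of the cumulant generating function. By Theorem~\ref{thm:Structure-of-exponential-families} the triple $(X,\,\mathcal{P}_{\eta},\,D_{\psi})$ is a dually flat statistical manifold, so the minimiser of $D_{\varphi}[\,\cdot\,\parallel P_{U}]$ over $\mathcal{Q}$ is characterised by the condition that the geodesic joining $P_{U}$ to the minimiser meets $\mathcal{Q}$ orthogonally; this is exactly a geodesic projection of $P_{U}$ onto $(X,\,\mathcal{Q})$. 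As $P_{U}$ enters as the reference argument of the divergence, the relevant projection is the $e$-affine projection $\pi_{e}$, and whenever $\mathcal{Q}$ is $m$-flat Corollary~\ref{cor:4.1} guarantees its uniqueness.

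The crux is this last identification: turning the variational characterisation of the divergence minimiser into the statement that it is a geodesic projection. For an $m$-flat submanifold it follows at once from the generalised Pythagorean relation underlying Corollary~\ref{cor:4.1}; for a merely smooth submanifold I would instead differentiate $D_{\varphi}[\,\cdot\,\parallel P_{U}]$ along the tangent directions of $\mathcal{Q}$ and, using $\nabla_{\mu}\varphi=\eta$, rewrite the first-order stationarity condition as the orthogonality, in the Fisher metric, of the $e$-geodesic emanating from $P_{U}$. This matches the definition of $\pi_{e}$ and shows the minimiser to be a geodesic projection. Finally, taking $\mathcal{Q}=\mathcal{P}$ recovers Lemma~\ref{lem:4.13} as the trivial case in which the projection fixes $P_{U}$, which serves as a consistency check.
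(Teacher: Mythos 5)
The paper does not actually contain a proof of Theorem~\ref{thm:MEE} --- its proof body is left as ``[todo]'' --- so there is nothing to compare your argument against; it has to stand on its own, and it does. Your reduction is sound: the identity $D_{\mathrm{KL}}[P\parallel P_{U}]=\mathrm{H}[P_{U}\mid X]-\mathrm{H}[P\mid X]$ is exactly what the computation inside Lemma~\ref{lem:4.12} establishes, Lemma~\ref{lem:4.11} places $P_{U}$ inside the ambient manifold so the projection problem makes sense, Lemma~\ref{lem:4.7} converts the objective into the Bregman divergence $D_{\varphi}$, and the passage from first-order stationarity of $D_{\varphi}[\,\cdot\,\parallel P_{U}]$ along tangent directions of $\mathcal{Q}$ (via $\nabla_{\mu}\varphi=\eta$ and $\boldsymbol{\eta}_{P_{U}}=0$) to Fisher-orthogonality of the connecting $e$-geodesic is the projection-theorem argument the paper itself invokes, in the dual direction, in the proof of Theorem~\ref{thm:4.3}. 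Indeed your proof is the precise mirror image of that MLE proof: there the empirical distribution is $m$-projected onto $\mathcal{Q}$ by minimising $D_{\psi}[Q\parallel P_{\boldsymbol{\sigma}}]=D_{\mathrm{KL}}[P_{\boldsymbol{\sigma}}\parallel Q]$ over the second KL argument, whereas here the uniform distribution is $e$-projected by minimising over the first argument, which is the correct correspondence for entropy maximisation. Two small points of care: the theorem claims only ``a'' geodesic projection for a merely smooth $\mathcal{Q}$, and your argument correctly delivers only the implication that any entropy maximiser (assuming one exists) is such a projection, with uniqueness available via Corollary~\ref{cor:4.1} only under $m$-flatness; and your consistency check $\mathcal{Q}=\mathcal{P}$ recovering Lemma~\ref{lem:4.13} is a nice touch that the paper's (absent) proof obviously does not supply.
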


\begin{center}
\begin{figure}[h]
\begin{centering}
\def\svgwidth{\columnwidth} 
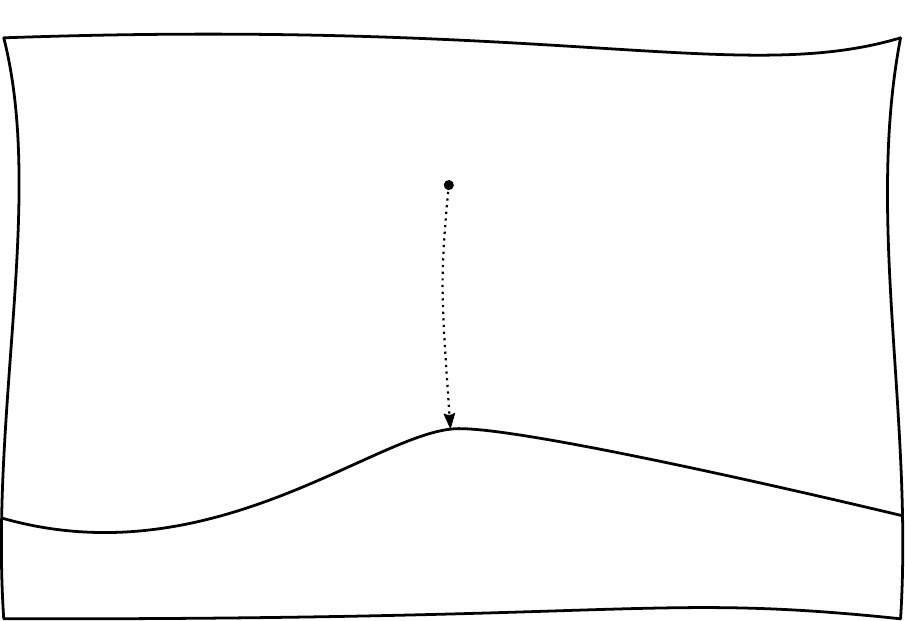
\par\end{centering}
\caption{ME estimation in Exponential Families}
\end{figure}
\par\end{center}
\begin{proof}
{[}todo{]}
\end{proof}
By assuming a given event $\sigma\in\Sigma$ within a sample space
$(X,\,\Sigma)$, the principle of maximum entropy emphasizes a probability
distribution, that minimizes additional assumptions. Then lemma \ref{lem:4.13}
states, that the maximum entropy estimation of a single observation
within the set of all probability distributions is given by the uniform
probability distribution over this observation, such that:
\[
u_{\sigma}(x)\coloneqq\begin{cases}
\frac{1}{\mu(\sigma)} & ,\,x\in\sigma\\
0 & ,\,x\notin\sigma
\end{cases}
\]
The dispensation of any additional knowledge except the observation
itself determines the uniform probability distribution as the empirical
probability of a single observation. This shall be extended to repeated
observations. Let $\boldsymbol{\sigma}=(\sigma_{i})_{i\in I}$ be
a repeated observation in $(X,\,\Sigma)$, then the density function
of a finite measure over $(X,\,\Sigma)$ is given by the arithmetic
mean of the uniform distributions of the individual single observations:
\[
f_{\boldsymbol{\sigma}}(x)=\frac{1}{|I|}\sum_{i\in I}u_{\sigma_{i}}(x)=\frac{1}{|I|}\sum_{i\in I}\frac{\delta_{x}(\sigma_{i})}{\mu(\sigma_{i})}
\]
Where $\mu$ is a $\sigma$-finite reference measure and $\delta$
the Dirac measure and defined by:
\[
\delta_{x}(\sigma)=\begin{cases}
1 & ,\,x\in\sigma\\
0 & ,\,x\notin\sigma
\end{cases}
\]
Then $f_{\boldsymbol{\sigma}}$ is a probability density function
over $(X,\,\Sigma)$, since $f_{\boldsymbol{\sigma}}(x)>0,\,\forall x\in X$
and: 
\begin{eqnarray*}
\int_{X}f_{\boldsymbol{\sigma}}(x)\mathrm{d}\mu(x) & = & \int_{X}\frac{1}{|I|}\sum_{i\in I}\frac{\delta_{x}(\sigma_{i})}{\mu(\sigma_{i})}\mathrm{d}\mu(x)\\
 & = & \frac{1}{|I|}\sum_{i\in I}\frac{1}{\mu(\sigma_{i})}\left(\int_{X}\delta_{x}(\sigma_{i})\mathrm{d}\mu(x)\right)\\
 & = & \frac{1}{|I|}\sum_{i\in I}\frac{\mu(X\cap\sigma_{i})}{\mu(\sigma_{i})}=1
\end{eqnarray*}
Furthermore the assumptions given by $f_{\boldsymbol{\sigma}}$ are
equal the knowledge which is given by the repeated observation $\boldsymbol{\sigma}$.
This determines $f_{\boldsymbol{\sigma}}$ as the density of an empirical
probability distribution.
\begin{defn*}[Empirical probability distribution]
\label{def:Empirical-probability-distribution} \emph{Let $(X,\,\Sigma)$
be a measurable space, $\boldsymbol{\sigma}=(\sigma_{i})_{i\in I}$
a repeated observation in $(X,\,\Sigma)$ and $\mu$ a $\sigma$-finite
reference measure over $(X,\,\Sigma)$. Then the empirical probability
distribution of $\boldsymbol{\sigma}$ over $(X,\,\Sigma)$ is given
by:
\[
P_{\boldsymbol{\sigma}}[A]\coloneqq\int_{A}\frac{1}{|I|}\sum_{i\in I}\frac{\delta_{x}(\sigma_{i})}{\mu(\sigma_{i})}\mathrm{d}\mu(x),\,\forall A\in\Sigma
\]
}
\end{defn*}
\begin{prop}
\label{prop:4.1}Let $(X,\,\Sigma)$ be a sample space, $P_{X}$ be
observable distribution of $(X,\,\Sigma)$ and $\boldsymbol{\sigma}(n)=(\sigma_{i})_{i\leq n}$
the partial sequences of a repeated observation $\boldsymbol{\sigma}=(\sigma_{i})_{i\in\mathbb{N}}$
in $(X,\,\Sigma)$. Then the sequence of the empirical probability
distributions $P_{\boldsymbol{\sigma}(n)}$ converges to $P_{X}$
as $n\to\infty$.
\end{prop}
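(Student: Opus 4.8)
The plan is to recognise Proposition \ref{prop:4.1} as an instance of the strong law of large numbers applied to the empirical measure. First I would evaluate the definition of the empirical probability distribution on a fixed measurable set $A\in\Sigma$; using $\int_{A}\delta_{x}(\sigma_{i})\mathrm{d}\mu(x)=\mu(A\cap\sigma_{i})$ this gives
\[
P_{\boldsymbol{\sigma}(n)}[A]=\frac{1}{n}\sum_{i=1}^{n}\frac{\mu(A\cap\sigma_{i})}{\mu(\sigma_{i})},
\]
so that $P_{\boldsymbol{\sigma}(n)}$ is exactly the arithmetic mean of the single-observation uniform distributions $u_{\sigma_{i}}$, each summand being the uniform mass that the observation $\sigma_{i}$ assigns to $A$. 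The proof thus reduces to controlling an average of $n$ independent contributions as $n\to\infty$.

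Next I would model the repeated observation $\boldsymbol{\sigma}=(\sigma_{i})_{i\in\mathbb{N}}$ as an independent, identically distributed sequence governed by the observable distribution $P_{X}$, and for fixed $A$ introduce the random variables $Y_{i}^{A}\coloneqq\mu(A\cap\sigma_{i})/\mu(\sigma_{i})$, which are i.i.d.\ and bounded in $[0,1]$. The decisive step is the consistency identity
\[
\mathrm{E}_{P_{X}}\!\left[Y_{i}^{A}\right]=P_{X}[A],
\]
which expresses that the two-stage mechanism -- first drawing an observation $\sigma_{i}$ according to $P_{X}$ and then redistributing its mass uniformly over $\sigma_{i}$ -- is mass-preserving in expectation on every event; equivalently, the expected random measure satisfies $\mathrm{E}[u_{\sigma_{i}}]=P_{X}$. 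Once this is established, the strong law of large numbers yields $P_{\boldsymbol{\sigma}(n)}[A]\to P_{X}[A]$ almost surely for each fixed $A$.

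To upgrade this set-wise convergence to convergence of the distributions themselves, I would fix a countable algebra generating $\Sigma$, apply the almost sure convergence simultaneously on this countable family -- a countable intersection of almost sure events is again almost sure -- and then invoke a standard Glivenko--Cantelli, respectively monotone class, argument to pass to all of $\Sigma$ and thereby to weak convergence $P_{\boldsymbol{\sigma}(n)}\Rightarrow P_{X}$. The main obstacle is the expectation identity: everything downstream of it is routine, but establishing $\mathrm{E}_{P_{X}}[Y_{i}^{A}]=P_{X}[A]$ requires making the informal notion of an observation \emph{drawn according to} $P_{X}$ precise enough that the uniform redistribution over each observed event reproduces $P_{X}$ on average, which is precisely where the construction of the empirical distribution meets the hypothesis that $P_{X}$ is the distribution generating the sample.
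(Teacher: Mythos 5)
Your proposal is correct in substance and rests on the same two pillars as the paper's own proof---the strong law of large numbers plus a mass-preservation identity saying that uniform redistribution over an observed event reproduces $P_{X}$ in expectation---but you execute it at a different level, and more rigorously. The paper applies the SLLN \emph{pointwise in $x$} to the empirical density $\frac{1}{n}\sum_{i=1}^{n}\delta_{x}(\sigma_{i})/\mu(\sigma_{i})$, then integrates over $A$, exchanging limit and integral without justification and evaluating $\int_{X}\delta_{\{x\}}(y)\,\mathrm{d}p_{X}(y)=p_{X}(x)$ by informal Dirac calculus; moreover its exceptional null set depends on $x$, which ranges over an uncountable set, so even the integration step would need a Fubini-type repair. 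You instead integrate first, so the SLLN is applied to the bounded i.i.d.\ variables $Y_{i}^{A}=\mu(A\cap\sigma_{i})/\mu(\sigma_{i})\in[0,1]$ for fixed $A$: this makes the SLLN hypotheses immediate, eliminates the limit--integral interchange, and replaces the delta-function computation by the explicit identity $\mathrm{E}[Y_{i}^{A}]=P_{X}[A]$, which you rightly isolate as the one substantive step (it is exactly the assumption the paper hides inside its Dirac manipulation, and it is where the meaning of ``drawn according to $P_{X}$'' must be made precise). Two caveats: your final extension step is the only shaky part, since setwise convergence of measures on a countable generating algebra does not in general propagate to the whole $\sigma$-algebra, so that step should be recast as weak convergence via a convergence-determining class rather than a monotone class argument; but note also that this step is not needed to match the paper, whose proposition and proof only assert convergence set by set, with the exceptional event allowed to depend on $A$---a conclusion your argument already reaches at the SLLN stage.
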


\begin{proof}
Let $p_{X}$ be the probability density function of $P_{X}$. Then
due to the strong law of large numbers it follows, that:
\begin{equation}
\lim_{n\to\infty}\frac{1}{n}\sum_{i\in1}^{n}\frac{\delta_{x}(\sigma_{i})}{\mu(\sigma_{i})}\stackrel{a.s.}{=}\mathrm{E}[\delta_{x}(y)]=\int_{X}\delta_{x}(y)\mathrm{d}p_{X}(y)
\end{equation}
Let $A\in\Sigma$, then the limit of the empirical probability distributions
$P_{n}[A]$ is given by:
\begin{eqnarray*}
 &  & \lim_{n\to\infty}P_{n}[A]\\
 & = & \lim_{n\to\infty}\int_{A}\frac{1}{n}\sum_{i\in1}^{n}\frac{\delta_{\{x\}}(\sigma_{i})}{\mu(\sigma_{i})}\mathrm{d}\mu(x)\\
 & = & \int_{A}\left(\lim_{n\to\infty}\frac{1}{n}\sum_{i\in1}^{n}\frac{\delta_{\{x\}}(\sigma_{i})}{\mu(\sigma_{i})}\right)\mathrm{d}\mu(x)\\
 & \stackrel{a.s.}{=} & \int_{A}\int_{X}\delta_{\{x\}}(y)\mathrm{d}p_{X}(y)\mathrm{d}\mu(x)\\
 & = & \int_{A}p_{X}(x)\mathrm{d}\mu(x)\\
 & = & P_{X}[A]
\end{eqnarray*}
\end{proof}

\section{Maximum Likelihood Estimation in Exponential Families}
\begin{lem}
\label{lem:4.14}Let $(X,\,\mathcal{Q})$ be a statistical model over
a sample space $(X,\,\Sigma)$ and $\boldsymbol{\sigma}$ a finite
repeated observation in $(X,\,\Sigma)$. Then the MLE has the following
representation:
\begin{equation}
\hat{\theta}_{\mathrm{ML}}\in\arg\max_{\theta}\sum_{i=1}^{n}\log\mathrm{L}[Q_{\theta}\mid\sigma_{i}]\label{eq:lem:4.2:1}
\end{equation}
\end{lem}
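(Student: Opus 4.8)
The plan is to reduce the joint maximum likelihood problem to the stated sum by exploiting the product structure of the likelihood of independent observations together with the monotonicity of the logarithm. First I would recall the definition of the maximum likelihood estimator as a maximizer of the likelihood of the full repeated observation, that is
\[
\hat{\theta}_{\mathrm{ML}} \in \arg\max_{\theta} \mathrm{L}[Q_{\theta} \mid \boldsymbol{\sigma}].
\]
The task is then to rewrite this objective in the additive form of equation \ref{eq:lem:4.2:1}.

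The crucial structural input is the factorization of the joint likelihood. Since $\boldsymbol{\sigma} = (\sigma_i)_{i=1}^{n}$ is a repeated observation, the individual observations $\sigma_i$ are drawn independently from the same distribution $Q_{\theta}$, so that the likelihood of the joint observation is the product of the individual likelihoods:
\[
\mathrm{L}[Q_{\theta} \mid \boldsymbol{\sigma}] = \prod_{i=1}^{n} \mathrm{L}[Q_{\theta} \mid \sigma_i].
\]
This is exactly where the independence encoded in the notion of a repeated observation enters the argument.

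Next I would apply the logarithm. Because $\log$ is strictly increasing on $(0,\infty)$, it preserves the set of maximizers, so that
\[
\arg\max_{\theta} \mathrm{L}[Q_{\theta} \mid \boldsymbol{\sigma}] = \arg\max_{\theta} \log \mathrm{L}[Q_{\theta} \mid \boldsymbol{\sigma}].
\]
Combining this with the factorization and the identity $\log \prod_i a_i = \sum_i \log a_i$ gives
\[
\log \mathrm{L}[Q_{\theta} \mid \boldsymbol{\sigma}] = \sum_{i=1}^{n} \log \mathrm{L}[Q_{\theta} \mid \sigma_i],
\]
and equation \ref{eq:lem:4.2:1} follows immediately by transitivity of the two displayed argmax identities.

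The argument involves no serious obstacle; what requires care is only the justification that a repeated observation yields a product likelihood, which is the independence assumption, and the observation that a strictly monotone transformation leaves the argmax invariant. The latter implicitly presumes that the individual likelihoods are strictly positive on the relevant parameter domain, so that the logarithm is well defined; otherwise the maximization is understood to be restricted to the set where $\mathrm{L}[Q_{\theta} \mid \boldsymbol{\sigma}] > 0$, on which the equivalence holds verbatim.
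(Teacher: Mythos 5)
Your proposal is correct and follows essentially the same route as the paper's own proof: factorizing the joint likelihood via the independence of the repeated observations and using the strict monotonicity of the logarithm to preserve the set of maximizers. The only difference is cosmetic (you factorize before taking logarithms, and you add a careful remark on strict positivity of the likelihood, which the paper glosses over by asserting monotonicity on $[0,1]$), so the two arguments coincide in substance.
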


\begin{proof}
Since the $\log$ transformation is strictly monotonous over $\mathrm{img}\mathrm{L}\subseteq[0,\,1]$
it directly follows, that:
\[
\arg\max_{\theta}\mathrm{L}[Q_{\theta}\mid\boldsymbol{\sigma}]=\arg\max_{\theta}\log\mathrm{L}[Q_{\theta}\mid\boldsymbol{\sigma}]
\]
Furthermore due to pairwise independence of the individual observations
are it follows, that:
\[
\log\mathrm{L}[Q_{\theta}\mid\boldsymbol{\sigma}]=\log\mathrm{\prod_{i=1}^{n}}\mathrm{L}[Q_{\theta}\mid\sigma_{i}]=\sum_{i=1}^{n}\log\mathrm{L}[Q_{\theta}\mid\sigma_{i}]
\]
And therefore:
\begin{eqnarray*}
\hat{\theta}_{\mathrm{ML}} & \in & \arg\max_{\theta}\mathrm{L}[Q_{\theta}\mid\boldsymbol{\sigma}]\\
 & = & \arg\max_{\theta}\sum_{i=1}^{n}\log\mathrm{L}[Q_{\theta}\mid\sigma_{i}]
\end{eqnarray*}
\end{proof}
\begin{lem}
\label{lem:4.15}Let $(X,\,\mathcal{Q})$ be a statistical model over
a sample space $(X,\,\Sigma)$ and $\boldsymbol{\sigma}$ a finite
repeated observation in $(X,\,\Sigma)$. Let further be $p_{\boldsymbol{\sigma}}$
the empirical probability density with respect to $\boldsymbol{\sigma}$.
Then the MLE has the following representation:
\begin{equation}
\hat{\theta}_{\mathrm{ML}}\in\arg\max_{\theta}\int_{X}p_{\boldsymbol{\sigma}}(x)\log q_{\theta}(x)\mathrm{d}\mu(x)\label{eq:lem:4.3:1}
\end{equation}
\end{lem}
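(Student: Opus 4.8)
The plan is to start from the additive representation of the maximum likelihood estimator established in Lemma \ref{lem:4.14}, namely $\hat{\theta}_{\mathrm{ML}}\in\arg\max_{\theta}\sum_{i=1}^{n}\log\mathrm{L}[Q_{\theta}\mid\sigma_{i}]$, and to rewrite the per-observation term $\log\mathrm{L}[Q_{\theta}\mid\sigma_{i}]$ as an integral against the uniform distribution attached to the single observation $\sigma_{i}$. By the discussion preceding the definition of the empirical distribution, a single observation $\sigma_{i}$ carries no information beyond its own occurrence and is therefore represented by the maximum-entropy law, i.e.\ the uniform density $u_{\sigma_{i}}(x)=\delta_{x}(\sigma_{i})/\mu(\sigma_{i})$. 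First I would record the resulting identity $\log\mathrm{L}[Q_{\theta}\mid\sigma_{i}]=\int_{X}u_{\sigma_{i}}(x)\log q_{\theta}(x)\,\mathrm{d}\mu(x)$, so that the log-likelihood of $\sigma_{i}$ is realised as the expected log-density of $q_{\theta}$ under the empirical law of that single observation.

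Next I would sum this identity over $i=1,\dots,n$ and use that the $\arg\max$ is invariant under multiplication by the positive constant $1/n$. Pulling the finite sum inside the integral by linearity gives
\[
\frac{1}{n}\sum_{i=1}^{n}\log\mathrm{L}[Q_{\theta}\mid\sigma_{i}]=\int_{X}\Big(\frac{1}{n}\sum_{i=1}^{n}u_{\sigma_{i}}(x)\Big)\log q_{\theta}(x)\,\mathrm{d}\mu(x).
\]
The bracketed term is, by the definition of the empirical probability distribution, exactly the empirical density $p_{\boldsymbol{\sigma}}(x)=\frac{1}{n}\sum_{i=1}^{n}\delta_{x}(\sigma_{i})/\mu(\sigma_{i})$, so the right-hand side equals the claimed functional $\int_{X}p_{\boldsymbol{\sigma}}(x)\log q_{\theta}(x)\,\mathrm{d}\mu(x)$. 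Since positive rescaling leaves the maximiser unchanged, combining this with Lemma \ref{lem:4.14} yields the asserted representation of $\hat{\theta}_{\mathrm{ML}}$.

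The only genuinely nontrivial step is the first one: identifying the per-observation likelihood $\mathrm{L}[Q_{\theta}\mid\sigma_{i}]$ with the expected log-density $\exp\int_{X}u_{\sigma_{i}}\log q_{\theta}\,\mathrm{d}\mu$ rather than with the naive averaged probability $Q_{\theta}[\sigma_{i}]/\mu(\sigma_{i})$. These coincide only in the point-observation limit $\mu(\sigma_{i})\to0$, so the argument must appeal to the maximum-entropy representation of a single observation (the uniform law $u_{\sigma_{i}}$) to justify taking the expected log-density as the correct information-geometric likelihood functional for a set-valued observation. Once that identification is in place, the remaining manipulations---linearity of the integral and scale-invariance of the $\arg\max$---are routine.
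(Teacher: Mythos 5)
Your proposal is correct and takes essentially the same route as the paper: the paper's proof expands $p_{\boldsymbol{\sigma}}$ into the per-observation uniform densities $\delta_{x}(\sigma_{i})/\mu(\sigma_{i})$, identifies $\frac{1}{\mu(\sigma_{i})}\int_{\sigma_{i}}\log q_{\theta}\,\mathrm{d}\mu$ with $\log\mathrm{L}[Q_{\theta}\mid\sigma_{i}]$ (up to an additive constant that the $\arg\max$ ignores), and then invokes Lemma \ref{lem:4.14} --- exactly your steps, merely run in the opposite direction (from the integral down to the sum rather than from the sum up to the integral). The bridging identity you rightly flag as the one nontrivial step is likewise asserted rather than derived in the paper's proof, so your treatment of it is on equal footing with the original.
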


\begin{proof}
By substitution of the empirical probability density $p_{\boldsymbol{\sigma}}(x)$
it follows that:
\begin{eqnarray}
 &  & \int_{X}p_{\boldsymbol{\sigma}}(x)\log q_{\theta}(x)\mathrm{d}\mu(x)\label{eq:lem:4.3:proof:1}\\
 & = & \int_{X}\left(\frac{1}{n}\sum_{i\in1}^{n}\frac{1}{\mu(\sigma_{i})}\delta_{\{x\}}(\sigma_{i})\right)\log q_{\theta}(x)\mathrm{d}\mu(x)\nonumber \\
 & = & \frac{1}{n}\sum_{i\in1}^{n}\frac{1}{\mu(\sigma_{i})}\int_{\sigma_{i}}\log q_{\theta}(x)\mathrm{d}\mu(x)\nonumber \\
 & = & \frac{1}{n}\sum_{i\in1}^{n}\frac{1}{\mu(\sigma_{i})}\left(\mu(\sigma_{i})\log\mathrm{L}[Q_{\theta}\mid\sigma_{i}]-\mu(\sigma_{i})\right)\nonumber \\
 & = & \frac{1}{n}\sum_{i\in1}^{n}\log\mathrm{L}[Q_{\theta}\mid\sigma_{i}]-1\nonumber 
\end{eqnarray}
The maximization of equation \ref{eq:lem:4.3:proof:1} with respect
to $\theta$ therefore yields the following identity:
\begin{eqnarray*}
 &  & \arg\max_{\theta}\int_{X}p_{\boldsymbol{\sigma}}(x)\log q_{\theta}(x)\mathrm{d}\mu(x)\\
 & = & \arg\max_{\theta}\left(\frac{1}{n}\sum_{i\in I}\log\mathcal{\mathrm{L}}[Q_{\theta}\mid\sigma_{i}]-1\right)\\
 & = & \arg\max_{\theta}\sum_{i\in I}\log\mathrm{L}[Q_{\theta}\mid\sigma_{i}]
\end{eqnarray*}
By lemma \ref{lem:4.14}, equation \ref{eq:lem:4.2:1} it follows,
that:
\[
\hat{\theta}_{\mathrm{ML}}\in\arg\max_{\theta}\int_{X}p_{\boldsymbol{\sigma}}(x)\log q_{\theta}(x)\mathrm{d}\mu(x)
\]
\end{proof}
\begin{thm}
\label{thm:4.3}Let $(X,\,\mathcal{P})$ be an exponential family
over a sample space $(X,\,\Sigma)$. Let further be $(X,\,\mathcal{Q})$
a smooth submanifold of $(X,\,\mathcal{P})$ and $\boldsymbol{\sigma}$
a repeated observation in $(X,\,\Sigma)$. Then a maximum likelihood
estimation of $(X,\,\mathcal{Q})$ respective to $\boldsymbol{\sigma}$
is given by the geodesic projection of the empirical probability $P_{\boldsymbol{\sigma}}$
to $(X,\,\mathcal{Q})$ in $(X,\,\mathcal{P})$.
\end{thm}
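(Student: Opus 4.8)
The plan is to reduce the maximum likelihood estimation to the minimization of a Kullback--Leibler divergence and then to identify this minimization with a geodesic projection in the dually flat structure established in Theorem \ref{thm:Structure-of-exponential-families}.

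First I would start from the representation of the MLE given in Lemma \ref{lem:4.15}, namely $\hat{\theta}_{\mathrm{ML}}\in\arg\max_{\theta}\int_{X}p_{\boldsymbol{\sigma}}(x)\log q_{\theta}(x)\,\mathrm{d}\mu(x)$, and read the integrand as a cross entropy. Splitting off the $\theta$-independent term $\int_{X}p_{\boldsymbol{\sigma}}(x)\log p_{\boldsymbol{\sigma}}(x)\,\mathrm{d}\mu(x)=-\mathrm{H}[P_{\boldsymbol{\sigma}}\mid X]$ gives
\[
D_{\mathrm{KL}}[P_{\boldsymbol{\sigma}}\parallel Q_{\theta}]=-\mathrm{H}[P_{\boldsymbol{\sigma}}\mid X]-\int_{X}p_{\boldsymbol{\sigma}}(x)\log q_{\theta}(x)\,\mathrm{d}\mu(x).
\]
Since the entropy term is constant in $\theta$, maximizing the expected log-likelihood is equivalent to minimizing the divergence, so that $\hat{\theta}_{\mathrm{ML}}\in\arg\min_{\theta}D_{\mathrm{KL}}[P_{\boldsymbol{\sigma}}\parallel Q_{\theta}]$.

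Next I would invoke Lemma \ref{lem:4.7} to replace the Kullback--Leibler divergence by the Bregman divergence $D_{\varphi}$ of the Legendre dual potential $\varphi=\psi^{*}$, yielding $\hat{\theta}_{\mathrm{ML}}\in\arg\min_{\theta}D_{\varphi}[P_{\boldsymbol{\sigma}}\parallel Q_{\theta}]$. By Theorem \ref{thm:Structure-of-exponential-families} the ambient structure is dually flat, and in such a manifold the minimizer of $D_{\varphi}[P_{\boldsymbol{\sigma}}\parallel\,\cdot\,]$ over a submanifold is, through the generalized Pythagorean relation underlying Corollary \ref{cor:4.1}, precisely the point reached from $P_{\boldsymbol{\sigma}}$ by the geodesic that meets $(X,\mathcal{Q})$ orthogonally, that is the $m$-affine projection $\pi_{m}$. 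Hence minimizing $D_{\varphi}[P_{\boldsymbol{\sigma}}\parallel\,\cdot\,]$ over the second argument is the geodesic projection of $P_{\boldsymbol{\sigma}}$ onto $(X,\mathcal{Q})$, which establishes the theorem; if $(X,\mathcal{Q})$ is in addition $e$-flat, Corollary \ref{cor:4.1} upgrades this to a unique projection.

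The main obstacle I anticipate is that the empirical distribution $P_{\boldsymbol{\sigma}}$ generally does not lie in the exponential family $(X,\mathcal{P})$ itself but in the enclosing $m$-affine space of all distributions over $(X,\Sigma)$; one must verify that $D_{\varphi}[P_{\boldsymbol{\sigma}}\parallel Q_{\theta}]$ and the associated geodesic projection remain well defined for this exterior base point, and that the projection theorem still applies when $P_{\boldsymbol{\sigma}}$ sits outside $(X,\mathcal{P})$ while the target $(X,\mathcal{Q})$ stays inside. A secondary but necessary check is to keep the orientation of the divergence fixed, so that minimization over the second argument is consistently identified with the $m$-projection $\pi_{m}$ rather than with its dual $\pi_{e}$.
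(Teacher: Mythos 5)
Your proposal is correct and is essentially the paper's own argument run in the opposite direction: the paper starts from the geodesic projection, identifies it via the projection theorem with minimizing $D_{\psi}[Q\parallel P_{\boldsymbol{\sigma}}]$, rewrites this as $D_{\mathrm{KL}}[P_{\boldsymbol{\sigma}}\parallel Q]$ and then as cross-entropy maximization, and closes with Lemma \ref{lem:4.15}, whereas you start from Lemma \ref{lem:4.15} and walk the same chain back to the projection, using $D_{\varphi}[P_{\boldsymbol{\sigma}}\parallel Q]$ (Lemma \ref{lem:4.7}) in place of the equivalent $D_{\psi}[Q\parallel P_{\boldsymbol{\sigma}}]$. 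Your closing concern about $P_{\boldsymbol{\sigma}}$ lying outside $(X,\,\mathcal{P})$ is well founded; the paper does not resolve it either, merely asserting that $P_{\boldsymbol{\sigma}}$ is given in expectation parameters and hence sits in the $m$-affine structure.
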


\begin{center}
\begin{figure}[h]
\begin{centering}
\def\svgwidth{\columnwidth} 
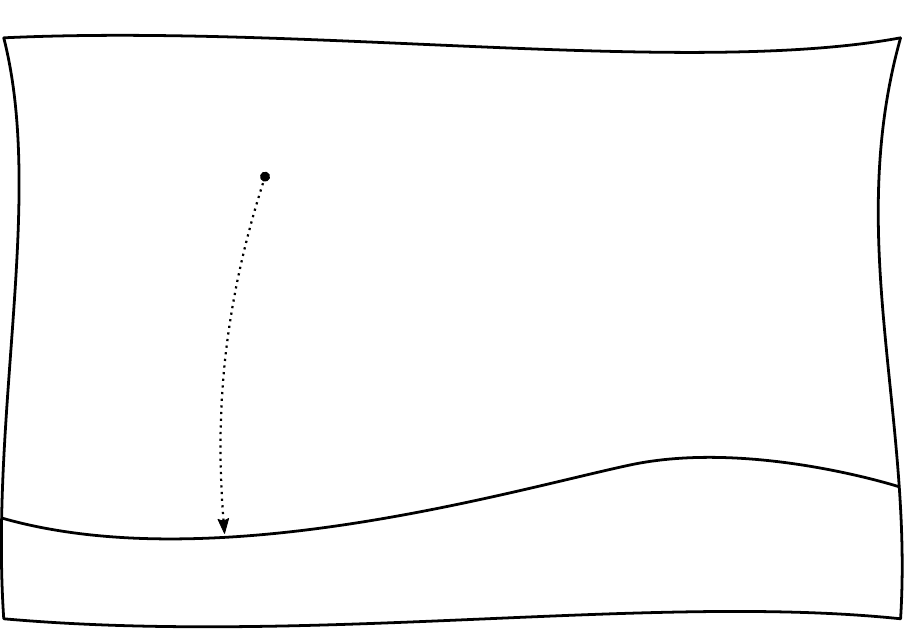
\par\end{centering}
\caption{ML estimation in Exponential Families}
\end{figure}
\par\end{center}
\begin{proof}
Let $(X,\,\mathcal{P}_{\eta})$ be an exponential family over a sample
space $(X,\,\Sigma)$ and $(X,\,\mathcal{Q})$ a smooth submanifold
of $(X,\,\mathcal{P})$. Then there exists a Bregman divergence $D_{\psi}$,
such that $(X,\,\mathcal{P}_{\eta},\,D_{\psi})$ is a dually flat
statistical manifold and $\eta$ is the $e$-affine parametrisation
of $(X,\,\mathcal{P}_{\eta},\,D_{\psi})$. Then $(X,\,\mathcal{Q}_{\eta},\,D_{\psi})$
is a smooth Riemannian submanifold of $(X,\,\mathcal{P}_{\eta},\,D_{\psi})$
with respect to the induced Riemannian metric $D_{\psi}$. Let $\boldsymbol{\sigma}$
be a repeated observation over $(X,\,\Sigma)$ and $P_{\boldsymbol{\sigma}}$
its empirical probability distribution.

Then $P_{\boldsymbol{\sigma}}$ is given in expectation parameters
in $(X,\,\Sigma)$ and therefore in an $m$-affine parametrisation
within Riemannian Manifold $(X,\,\mathcal{P}_{\eta},\,D_{\psi})$.
By applying the projection theorem the geodesic projection of $P_{\boldsymbol{\sigma}}$
to $(X,\,\mathcal{Q}_{\eta},\,D_{\psi})$, equals the dual affine
projection, and therefore by a point $Q\in\mathcal{Q}$, that minimizes
the Bregman divergence $D_{\psi}[Q\parallel P_{\boldsymbol{\sigma}}]$.
The geodesic distance is therefore given by:
\begin{eqnarray*}
 &  & d(P_{\boldsymbol{\sigma}},\,Q)\\
 & = & D_{\psi}[Q\parallel P_{\boldsymbol{\sigma}}]\\
 & = & D_{\mathrm{KL}}[P_{\boldsymbol{\sigma}}\parallel Q]\\
 & = & \int_{X}p_{\boldsymbol{\sigma}}(x)\log\frac{p_{\boldsymbol{\sigma}}(x)}{q(x)}\mathrm{d}\mu(x)\\
 & = & \int_{X}p_{\boldsymbol{\sigma}}(x)\log p_{\boldsymbol{\sigma}}(x)\mathrm{d}\mu(x)\\
 &  & -\int_{X}p_{\boldsymbol{\sigma}}(x)\log q(x)\mathrm{d}\mu(x)\\
 & = & -\mathrm{H}[P_{\sigma}\mid X]-\int_{X}p_{\boldsymbol{\sigma}}(x)\log q(x)\mathrm{d}\mu(x)
\end{eqnarray*}
The minimization of $d(P_{\boldsymbol{\sigma}},\,Q)$ with respect
to the natural parametrisation $\eta$ therefore yields the following
identity:
\begin{equation}
\arg\min_{\eta}d(P_{\boldsymbol{\sigma}},\,Q_{\eta})=\arg\max_{\eta}\int_{X}p_{\boldsymbol{\sigma}}(x)\log q_{\eta}(x)\mathrm{d}\mu(x)\label{eq:prop:4.1:1}
\end{equation}
By equation $\ref{eq:lem:4.3:1}$ and lemma, equation $\ref{eq:prop:4.1:1}$
it follows, that: 
\begin{eqnarray*}
\hat{\theta}_{\mathrm{ML}} & \stackrel{\ref{eq:lem:4.3:1}}{\in} & \arg\max_{\eta}\int_{X}p_{\boldsymbol{\sigma}}(x)\log q_{\eta}(x)\mathrm{d}\mu(x)\\
 & \stackrel{\ref{eq:prop:4.1:1}}{=} & \arg\min_{\eta}d(P_{\boldsymbol{\sigma}},\,Q_{\eta})
\end{eqnarray*}
\end{proof}

\section{Latent variable models}

Exponential families, as introduced in the previous sections, statistically
relate random variables over a common statistical population by their
common probability distribution in the sample space. In many cases
however the intrinsic structure of this relationship has a natural
decomposition by the introduction of latent random variables, that
are not directly observable from the statistical population but assumed
to affect the observations. This is of particular importance for the
modelling of statistical populations with complex network structures.
In this case the properties of the network may be incorporated by
the conditional transition probabilities between observable and latent
random variables.

In completely observable statistical models, the probability distributions
may be estimated by the empirical probability distributions of repeated
observations. In latent variable models however the conditional transition
probabilities $p(v\mid h)$ and $p(h\mid v)$ between the observables
$v\in V$ and the latent variables $h\in H$ in general prevent this
inference. The only exception is given if $p(v\mid h)$ and $p(h\mid v)$
are uniform distributed, such that for any given $v$ any $h$ has
the same probability with respect to $v$ and vice versa. In this
case estimations decompose into independent estimations of the observable
variables and the latent variables. If the conditional transition
probabilities, however are not uniform distributed, they have to be
taken into account for estimations. This also applies to empirical
distributions. Let $\boldsymbol{\sigma}(n)=(\sigma_{i})_{i\leq n}$
be the partial sequences of a repeated observation $\boldsymbol{\sigma}=(\sigma_{i})_{i\in\mathbb{N}}$
in $(V,\,\Sigma_{v})$, then by proposition \ref{prop:4.1} it follows,
that the empirical probabilities $P_{\boldsymbol{\sigma}(n)}$ converge
in distribution to the true probability distribution $P_{V}$ of $(V,\,\Sigma_{v})$.
Since $P_{V}$ however is the marginal distribution of the observables
in \emph{$(X,\,\Sigma)$} the common empirical probabilities over
\emph{$(X,\,\Sigma)$} are constituted by an empirical probability
of a repeated observation and a conditional transition probability.
The probability density $p_{\boldsymbol{\sigma}}$ of an empirical
probability over $(X,\,\Sigma)$ is therefore given by:
\[
p_{\boldsymbol{\sigma}}(x)=p_{\boldsymbol{\sigma}}(v,\,h)=p_{\boldsymbol{\sigma}}(v)p(h\mid v),\,\forall x\in X
\]
In the presence of continuous variables it is useful to restrict the
empirical probability distributions to ``non pathological'' cases.
This restriction defines statistical model with respect to empirical
observations.
\begin{defn*}[Empirical model]
\label{def:Empirical-model}\emph{ Let $(X,\,\Sigma)$ be a partially
observable measurable space with $X=V\times H$. Then a statistical
model $(X,\,\mathcal{E})$ is termed an empirical model over $(X,\,\Sigma)$
if $\mathcal{E}$ comprises all empirical probability distributions
over $(X,\,\Sigma)$, which are constituted by a finite repeated observation
and a conditional transition probability of a given set $\mathcal{T}$.
If $\mathcal{T}$ is the set of all absolutely continuous conditional
transition probabilities, then $(X,\,\mathcal{E})$ is termed an absolutely
continuous empirical model.}
\end{defn*}

\section{Exponential families with latent variables}

Let $(X,\,\mathcal{P})$ be an exponential family over a partially
observable measurable space $(X,\,\mathcal{P})$. Then due to theorem
\ref{thm:Structure-of-exponential-families} the structure of $(X,\,\mathcal{P})$
is that of a dually flat manifold such that there exists a parametrisation
$\eta$ and a convex function $\psi$, that allow to regard $(X,\,\mathcal{P})$
as a dually flat statistical manifold, given by $(X,\,\mathcal{P}_{\eta},\,D_{\psi})$.
In the purpose of observation based estimations in the presence of
latent variables, the obstacle that has to be taken is the extension
of $(X,\,\mathcal{P}_{\eta},\,D_{\psi})$ to a dually flat embedding
space $(X,\,\mathcal{U},\,D)$, that covers $(X,\,\mathcal{P}_{\eta},\,D_{\psi})$
as well as the empirical model $(X,\,\mathcal{E})$. For arbitrary
empirical models however this embedding does generally not exist,
for which $(X,\,\mathcal{E})$ is assumed to be absolutely continuous.
Then $(X,\,\mathcal{E})$ is generally an infinite dimensional exponential
family.
\begin{lem}
\label{lem:4.16}Let $(X,\,\mathcal{E})$ be an absolutely continuous
empirical model over a partially observable measurable space $(X,\,\Sigma)$.
Then $(X,\,\mathcal{E})$ is an infinite dimensional exponential family
and its probability densities are given by:
\begin{align*}
p_{\boldsymbol{\sigma}}(\boldsymbol{v},\,\boldsymbol{h}) & =\frac{\exp\int_{H}\eta_{s}(\boldsymbol{v},\,\boldsymbol{r})\delta(\boldsymbol{h}-\boldsymbol{r})\mathrm{d}\boldsymbol{r}}{\exp\psi_{s}(\eta_{s}(\boldsymbol{v},\,\boldsymbol{h}))}
\end{align*}
where $\boldsymbol{v}\mathbb{\in R}^{k}$ and $\boldsymbol{h}\in\mathbb{R}^{l}$
respectively denote the observable and latent variables, $\eta_{s}(\boldsymbol{v},\,\boldsymbol{h})$
scalar coefficients and $\psi_{s}$ the cumulant generating function,
given by:
\begin{equation}
\psi_{s}(\eta_{s}(\boldsymbol{v},\,\boldsymbol{h}))=\log\int_{\mathbb{R}^{l}}\exp(\eta_{s}(\boldsymbol{v},\,\boldsymbol{r}))\mathrm{d}\boldsymbol{r}\label{eq:lvm:Lemma41}
\end{equation}
\end{lem}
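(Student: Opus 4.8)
The plan is to match the empirical density against the canonical shape \ref{eq:expfamily:canonicalform}, with the finite natural-parameter vector $\boldsymbol{\eta}_P$ replaced by a parameter \emph{field} over the continuum $H=\mathbb{R}^l$ and the Euclidean pairing $\boldsymbol{\eta}_P\cdot\boldsymbol{x}$ realised through the sifting property of the Dirac measure. First I would invoke the definition of an absolutely continuous empirical model to write every density as $p_{\boldsymbol{\sigma}}(\boldsymbol{v},\boldsymbol{h})=p_{\boldsymbol{\sigma}}(\boldsymbol{v})\,p(\boldsymbol{h}\mid\boldsymbol{v})$ with the transition probability $p(\cdot\mid\boldsymbol{v})$ absolutely continuous, hence strictly positive on its support and admitting a logarithm. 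Setting $\eta_s(\boldsymbol{v},\boldsymbol{h}):=\log p_{\boldsymbol{\sigma}}(\boldsymbol{v},\boldsymbol{h})$, the natural parameter is no longer a vector in $\mathbb{R}^n$ but a scalar field indexed by the continuum of latent states $\boldsymbol{r}\in\mathbb{R}^l$; this continuum index set, replacing the finite set $\{1,\dots,n\}$, is precisely the sense in which the family is \emph{infinite dimensional}.

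With this choice the numerator is reconstructed from the parameter field by the sifting property,
\[
\int_H\eta_s(\boldsymbol{v},\boldsymbol{r})\,\delta(\boldsymbol{h}-\boldsymbol{r})\,\mathrm{d}\boldsymbol{r}=\eta_s(\boldsymbol{v},\boldsymbol{h}),
\]
which is the continuum analogue of $\boldsymbol{\eta}_P\cdot\boldsymbol{x}=\sum_i\eta_ix_i$: the sufficient statistic is the evaluation functional $\boldsymbol{h}\mapsto\delta(\,\cdot-\boldsymbol{h})$ paired against the field $\eta_s(\boldsymbol{v},\cdot)$. I would then verify that $\psi_s$ as given in \ref{eq:lvm:Lemma41} is the associated cumulant generating function by the computation already carried out in \ref{eq:expfam:func}: imposing normalisation of $\exp\eta_s(\boldsymbol{v},\cdot)$ over $\mathbb{R}^l$ forces the partition factor $\exp\psi_s=\int_{\mathbb{R}^l}\exp(\eta_s(\boldsymbol{v},\boldsymbol{r}))\,\mathrm{d}\boldsymbol{r}$, which, exactly as $\psi(\boldsymbol{\eta}_P)$ is independent of $\boldsymbol{x}$, is a functional of the whole field $\eta_s(\boldsymbol{v},\cdot)$ and independent of the particular latent point $\boldsymbol{h}$. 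Dividing numerator by partition factor then reproduces the claimed representation, normalised over the latent fibre, term by term in parallel with \ref{eq:expfamily:canonicalform}.

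The hard part will not be the formal manipulation but endowing it with rigorous meaning, since the sufficient statistic $\delta(\,\cdot-\boldsymbol{h})$ is a distribution rather than a function and the pairing above is a duality pairing, not an integral of two functions. I would therefore fix the dual pair explicitly --- the admissible log-densities $\eta_s(\boldsymbol{v},\cdot)$ on one side, the Dirac evaluations on the other --- and check that the partition functional in \ref{eq:lvm:Lemma41} converges for every admissible parameter. This convergence is exactly where absolute continuity of the transition set $\mathcal{T}$ enters, as it guarantees $p(\cdot\mid\boldsymbol{v})\in L^1$ with a finite log-partition, so that $\psi_s$ is well defined. Once the pairing is made precise and $\psi_s$ shown finite, the density carries the canonical exponential form with a parameter ranging over an infinite-dimensional function space, which is the asserted structure.
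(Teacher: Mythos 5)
Your overall strategy --- realising the natural-parameter pairing through the sifting property of the Dirac measure and defining $\psi_{s}$ as a fibre-wise log-partition --- is the same device the paper uses, but your concrete parameter choice breaks the claimed identity. Setting $\eta_{s}(\boldsymbol{v},\,\boldsymbol{h})\coloneqq\log p_{\boldsymbol{\sigma}}(\boldsymbol{v},\,\boldsymbol{h})$ and inserting it into \ref{eq:lvm:Lemma41} gives $\exp\psi_{s}=\int_{\mathbb{R}^{l}}p_{\boldsymbol{\sigma}}(\boldsymbol{v},\,\boldsymbol{r})\,\mathrm{d}\boldsymbol{r}=p_{\boldsymbol{\sigma}}(\boldsymbol{v})$, the marginal of the observables --- which for an empirical model is the Dirac comb $\frac{1}{n}\sum_{i}\delta(\boldsymbol{v}-\boldsymbol{s}_{i})$, not $1$. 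Your quotient is therefore $p_{\boldsymbol{\sigma}}(\boldsymbol{v},\,\boldsymbol{h})/p_{\boldsymbol{\sigma}}(\boldsymbol{v})=p(\boldsymbol{h}\mid\boldsymbol{v})$, the conditional transition density, and not the joint empirical density asserted in the lemma; your own phrase ``normalised over the latent fibre'' is exactly the problem, since the joint density is not fibre-normalised. The paper's parameter differs from yours by precisely the additive offset needed to cancel the denominator: it first derives the mixture representation $p_{s}(\boldsymbol{v},\,\boldsymbol{h})=\int_{H}\mu_{s}(\boldsymbol{v},\,\boldsymbol{r})\delta(\boldsymbol{h}-\boldsymbol{r})\,\mathrm{d}\boldsymbol{r}$ with mixing field $\mu_{s}(\boldsymbol{v},\,\boldsymbol{r})=\frac{1}{n}\sum_{i}\delta(\boldsymbol{v}-\boldsymbol{s}_{i})p(\boldsymbol{r}\mid\boldsymbol{s}_{i})$ (equations \ref{eq:lvm:Lemma41:eq1}--\ref{eq:lvm:Lemma41:eq5}), and then sets $\eta_{s}=\log\mu_{s}+\psi_{s}$, so that $\exp(\eta_{s})/\exp(\psi_{s})=\mu_{s}=p_{s}$ holds by construction.

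Bypassing the mixture stage also loses content the paper relies on later: equation \ref{eq:lvm:Lemma41:eq5} exhibits $(X,\,\mathcal{E})$ as an infinite-dimensional \emph{mixture} family, and the coefficients $\mu_{s}$ reappear in lemma \ref{lem:4.17} as the expectation coordinates $\boldsymbol{\mu}_{\mathcal{E}}$ dual to $\boldsymbol{\eta}_{\mathcal{E}}$; your direct route produces only the $e$-side of that structure. Your closing remarks on fixing a rigorous dual pairing for the Dirac evaluations are sound and go beyond anything the paper attempts, but they do not repair the parameter mismatch. To be fair, the paper's own normalisation \ref{eq:lvm:Lemma41} is itself only formally consistent (substituting the paper's $\eta_{s}$ back into it yields $\psi_{s}=\psi_{s}+\log p_{\boldsymbol{\sigma}}(\boldsymbol{v})$), but with your choice the discrepancy surfaces already in the represented density rather than in the self-consistency of $\psi_{s}$: to recover the stated formula you must keep the observation comb inside the parameter field, as the paper's $\mu_{s}$ does.
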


\begin{proof}
Due to the definition of an empirical model the probability density
$p_{\boldsymbol{\sigma}}$ of any $P\in\mathcal{E}$ may be written
as:
\begin{equation}
p_{\boldsymbol{\sigma}}(\boldsymbol{v},\,\boldsymbol{h})=\frac{1}{n}\sum_{i=1}^{n}\delta(\boldsymbol{v}-\boldsymbol{s}_{i})p(\boldsymbol{h}\mid\boldsymbol{s}_{i})\label{eq:lvm:Lemma41:eq1}
\end{equation}
Furthermore any conditional transition probability $p(\boldsymbol{h}\mid\boldsymbol{v})$
is given by:
\begin{equation}
p(\boldsymbol{h}\mid\boldsymbol{v})=\int_{H}\delta(\boldsymbol{h}-\boldsymbol{r})p(\boldsymbol{r}\mid\boldsymbol{v})\mathrm{d}\boldsymbol{r}\label{eq:lvm:Lemma41:eq2}
\end{equation}
Therefore by equation \ref{eq:lvm:Lemma41:eq1} and \ref{eq:lvm:Lemma41:eq2}
it follows, that:
\begin{equation}
p_{s}(\boldsymbol{v},\,\boldsymbol{h})=\frac{1}{n}\sum_{i=1}^{n}\delta(\boldsymbol{v}-\boldsymbol{s}_{i})\int_{\mathbb{R}^{l}}\delta(\boldsymbol{h}-\boldsymbol{r})p(\boldsymbol{r}\mid\boldsymbol{s}_{i})\mathrm{d}\boldsymbol{r}\label{eq:lvm:Lemma41:eq3}
\end{equation}
Since $p(\boldsymbol{h}\mid\boldsymbol{v})$ are absolutely continues
equation \ref{eq:lvm:Lemma41:eq3} may be rewritten to: 
\[
p_{s}(\boldsymbol{v},\,\boldsymbol{h})=\int_{H}\left(\frac{1}{n}\sum_{i=1}^{n}\delta(\boldsymbol{v}-\boldsymbol{s}_{i})p(\boldsymbol{r}\mid\boldsymbol{s}_{i})\right)\delta(\boldsymbol{h}-\boldsymbol{r})\mathrm{d}\boldsymbol{r}
\]
By the substitution
\begin{equation}
\mu_{s}(\boldsymbol{v},\,\boldsymbol{r})=\frac{1}{n}\sum_{i=1}^{n}\delta(\boldsymbol{v}-\boldsymbol{s}_{i})p(\boldsymbol{r}\mid\boldsymbol{s}_{i})\label{eq:lvm:Lemma41:eq4}
\end{equation}
it follows, that the the empirical probabilities in $\mathcal{E}$
may be written as a mixture with mixing coefficients $\mu_{s}(\boldsymbol{v},\,\boldsymbol{r})$:
\begin{equation}
p_{s}(\boldsymbol{v},\,\boldsymbol{h})=\int_{H}\mu_{s}(\boldsymbol{v},\,\boldsymbol{r})\delta(\boldsymbol{h}-\boldsymbol{r})\mathrm{d}\boldsymbol{r}\label{eq:lvm:Lemma41:eq5}
\end{equation}
This shows, that $\mathcal{E}$ is an infinite dimensional mixture
family. By a further transformation:
\[
\eta_{s}(\boldsymbol{v},\,\boldsymbol{h})=\log\mu_{s}(\boldsymbol{v},\,\boldsymbol{h})+\psi_{s}
\]
 a dual representation of equation \ref{eq:lvm:Lemma41:eq5} is obtained,
with:
\begin{align}
p_{s}(\boldsymbol{v},\,\boldsymbol{h})= & \frac{\exp\int_{\mathbb{R}^{l}}\eta_{s}(\boldsymbol{v},\,\boldsymbol{r})\delta(\boldsymbol{h}-\boldsymbol{r})\mathrm{d}\boldsymbol{r}}{\exp\psi_{s}(\eta_{s}(\boldsymbol{v},\,\boldsymbol{h}))}\label{eq:lvm:Lemma41:eq6}
\end{align}
This shows, that $(X,\,\mathcal{E})$ is also an infinite dimensional
exponential family with natural variables $\eta_{s}(\boldsymbol{v},\,\boldsymbol{r})$
and the cumulative generating function is given by
\[
\text{ }\psi_{s}(\eta_{s}(\boldsymbol{v},\,\boldsymbol{h}))=\log\int_{\mathbb{R}^{l}}\exp(\eta_{s}(\boldsymbol{v},\,\boldsymbol{r}))\mathrm{d}\boldsymbol{r}
\]
\end{proof}
\begin{lem}
\label{lem:4.17}Let $(X,\,\mathcal{P})$ be an exponential family
over a partially observable measurable space $(X,\,\Sigma)$ and $(X,\,\mathcal{E})$
an absolutely continuous empirical model over $(X,\,\Sigma)$. Then
there exists a dually flat statistical manifold $(X,\,\mathcal{U},\,D)$,
that covers $(X,\,\mathcal{P})$ and $(X,\,\mathcal{E})$ as dually
flat submanifolds.
\end{lem}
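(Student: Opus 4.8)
The plan is to realize both families inside the \emph{full} exponential family of absolutely continuous distributions over the sample space and then to transfer the dually flat structure established for finite exponential families to this infinite-dimensional ambient manifold. Fix a $\sigma$-finite reference measure $\mu$ over $(X,\,\Sigma)$ and let $(X,\,\mathcal{U})$ consist of all probability distributions whose density with respect to $\mu$ has the canonical form
\[
p_{\theta}(x)=\exp\left(\theta(x)-\Psi(\theta)\right),\quad\Psi(\theta)=\log\int_{X}\exp\theta(x)\,\mathrm{d}\mu(x),
\]
where the natural parameter $\theta$ ranges over the admissible functions for which $\Psi(\theta)<\infty$. By construction $\theta$ plays the role of an $e$-affine coordinate and $\Psi$ is the cumulant generating function of the infinite-dimensional exponential family $(X,\,\mathcal{U})$, which evidently contains every density of $\mathcal{P}$ and, by absolute continuity, of $\mathcal{E}$.

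First I would verify that $(X,\,\mathcal{U},\,D_{\Psi})$ is dually flat. The argument is the functional analogue of the finite-dimensional development: as in lemma \ref{lem:4.1} the directional derivatives of $\Psi$ reproduce the expectation functional and its covariance, so $\Psi$ is convex and induces the Fisher metric; as in lemma \ref{lem:4.2} its Bregman divergence $D_{\Psi}$ is the dual Kullback--Leibler divergence; the $\theta$-affine (log-linear) curves are exponential families and hence flat by lemmas \ref{lem:4.4} and \ref{lem:4.5}, while the dual $\mu$-affine curves are ordinary mixtures and flat by lemmas \ref{lem:4.9} and \ref{lem:4.10}. The criterion of lemma \ref{lem:3.10} then yields that $(X,\,\mathcal{U},\,D_{\Psi})$ is a dually flat statistical manifold; equivalently, one may regard $\mathcal{U}$ as an infinite-dimensional exponential family and invoke theorem \ref{thm:Structure-of-exponential-families} directly, so we may take $D\coloneqq D_{\Psi}$.

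It remains to embed the two families as flat submanifolds. For $(X,\,\mathcal{P})$ the natural representation $p_{\eta}(x)=\exp(\boldsymbol{\eta}_{P}\cdot\boldsymbol{x}-\psi(\boldsymbol{\eta}_{P}))$ identifies it with the image of the linear map $\boldsymbol{\eta}_{P}\mapsto\theta=\boldsymbol{\eta}_{P}\cdot\boldsymbol{x}$ into the $e$-affine coordinates of $\mathcal{U}$; this image is a finite-dimensional affine subspace, so $(X,\,\mathcal{P})$ is $e$-flat, and being an exponential family it is a dually flat submanifold by theorem \ref{thm:Structure-of-exponential-families}. For $(X,\,\mathcal{E})$ lemma \ref{lem:4.16} supplies simultaneously a mixture representation \ref{eq:lvm:Lemma41:eq5}, linear in the mixing coefficients $\mu_{s}(\boldsymbol{v},\,\boldsymbol{r})$, and the dual exponential representation \ref{eq:lvm:Lemma41:eq6}; the former exhibits $(X,\,\mathcal{E})$ as an $m$-flat submanifold of $\mathcal{U}$ and the latter as an exponential family in its own right, so $(X,\,\mathcal{E})$ is likewise a dually flat submanifold.

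The main obstacle is analytic rather than structural: one must choose the function space of admissible natural parameters so that $\Psi$ is finite, strictly convex and differentiable, and so that the Fisher metric and the Bregman divergence $D_{\Psi}$ remain well-defined on all of $\mathcal{U}$, in particular so that the covariance form of lemma \ref{lem:4.1} stays positive definite in infinite dimensions. Here the absolute continuity of $\mathcal{E}$ is exactly the hypothesis that keeps the empirical densities free of singular (Dirac) components in the latent direction, which is what places $\mathcal{E}$ inside the density manifold $\mathcal{U}$ and legitimises the embedding; without it the empirical distributions would leave the absolutely continuous manifold and no common dually flat ambient space would exist.
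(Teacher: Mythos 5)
Your construction breaks at the decisive point: the empirical model is not contained in your ambient manifold. You define $\mathcal{U}$ as the family of all distributions possessing a density $\exp(\theta(x)-\Psi(\theta))$ with respect to one fixed $\sigma$-finite reference measure $\mu$, and you claim $\mathcal{E}\subseteq\mathcal{U}$ ``by absolute continuity.'' But in the paper's definition an absolutely continuous empirical model only requires the conditional transition probabilities $p(\boldsymbol{h}\mid\boldsymbol{v})$ to be absolutely continuous; the empirical densities themselves are, by equation \ref{eq:lvm:Lemma41:eq1}, of the form $\frac{1}{n}\sum_{i}\delta(\boldsymbol{v}-\boldsymbol{s}_{i})\,p(\boldsymbol{h}\mid\boldsymbol{s}_{i})$, and so carry Dirac components at the observed points in the observable direction. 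Your closing paragraph concedes exactly this (``free of singular components in the latent direction'') and then draws the wrong conclusion: removing singularities in the latent direction does not place $\mathcal{E}$ inside a manifold of $\mu$-densities. Worse, since $\mathcal{E}$ comprises the empirical distributions of \emph{all} finite observations and the observation points range over a continuum, while a $\sigma$-finite measure has at most countably many atoms in the observable direction, no choice of $\mu$ dominates all of $\mathcal{E}$. So $\mathcal{E}\not\subseteq\mathcal{U}$, and the lemma's entire content --- a \emph{common} dually flat cover of $\mathcal{P}$ and $\mathcal{E}$ --- is not established.

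There is a second, independent problem: for $\mathcal{P}$ you verify only $e$-flatness of the embedding (a linear image in the $\theta$-coordinates) and then infer ``dually flat submanifold'' from the fact that $\mathcal{P}$ is intrinsically an exponential family. That conflates intrinsic dual flatness with flatness of the embedding in \emph{both} ambient affine structures, which is what the paper actually proves and what the downstream results consume (corollaries \ref{cor:4.2} and \ref{cor:4.3} need the $e$-flat/$m$-flat pairing). Inside a maximal density family, a finite-dimensional exponential family is affine in the natural coordinates but generically \emph{not} $m$-flat, since it is not closed under mixtures. The paper's route avoids both problems at once: it never invokes a maximal density manifold, but defines $\mathcal{U}$ as the product family $q(\boldsymbol{v},\,\boldsymbol{h})=p(\boldsymbol{v},\,\boldsymbol{h})\,p_{s}(\boldsymbol{v},\,\boldsymbol{h})$ with $p\in\mathcal{P}$, $p_{s}\in\mathcal{E}$, parametrised by the concatenation $\boldsymbol{\eta}=(\boldsymbol{\eta}_{\mathcal{P}},\,\boldsymbol{\eta}_{\mathcal{E}})^{T}$ with cumulant function $\psi(\boldsymbol{\eta})=\psi_{\mathcal{P}}(\boldsymbol{\eta}_{\mathcal{P}})+\psi_{\mathcal{E}}(\boldsymbol{\eta}_{\mathcal{E}})$. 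Both families are then contained by construction (Dirac components and all, at the paper's level of formality), and the additive separation of $\psi$ makes the dual coordinates split blockwise, $\boldsymbol{\mu}=(\boldsymbol{\mu}_{\mathcal{P}},\,\boldsymbol{\mu}_{\mathcal{E}})^{T}$, so that each factor is a coordinate slice, affine in both the $\eta$- and the $\mu$-coordinates, hence simultaneously $e$- and $m$-flat. That separability argument is the key idea your proposal is missing, and it cannot be recovered within the maximal-family ansatz.
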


\begin{proof}
With respect to the partially observable measurable space $(X,\,\Sigma)$
any $p\in\mathcal{P}$ may be written as $p(\boldsymbol{x})=p(\boldsymbol{v},\,\boldsymbol{h})$
and any $p_{\boldsymbol{s}}\in\mathcal{E}$ as $p_{s}(\boldsymbol{x})=p_{s}(\boldsymbol{v},\,\boldsymbol{h})$.
Let $\mathcal{U}$ be given by:
\[
\mathcal{U}=\left\{ q\mid q(\boldsymbol{v},\,\boldsymbol{h})=p(\boldsymbol{v},\,\boldsymbol{h})p_{s}(\boldsymbol{v},\,\boldsymbol{h}),\,p\in\mathcal{P},\,p_{s}\in\mathcal{E}\right\} 
\]
Since $(X,\,\mathcal{P})$ is an exponential family it has a natural
parametrisation $\boldsymbol{\eta}_{\mathcal{P}}$ and due to Lemma
\ref{lem:4.16} the continuous empirical model $(X,\,\mathcal{E})$
is an infinite dimensional exponential family with natural coefficients
$\boldsymbol{\eta}_{\mathcal{E}}=(\eta_{s}(\boldsymbol{v},\,\boldsymbol{h})_{v,h})^{T}$.
Then a parametrisation of $(X,\,\mathcal{U})$ is given by:
\[
\boldsymbol{\eta}=(\boldsymbol{\eta}_{\mathcal{P}},\,\boldsymbol{\eta}_{\mathcal{E}})^{T}
\]
Let further be $\psi_{\mathcal{P}}$ the cumulant generating function
of $(X,\,\mathcal{P})$ and $\psi_{\mathcal{E}}$ the cumulant generating
function of $(X,\,\mathcal{E})$, then $\psi_{\mathcal{P}}$ and $\psi_{\mathcal{E}}$
are convex functions and therefore a convex function over $\boldsymbol{\eta}$
is given by:
\[
\psi(\boldsymbol{\eta})=\psi_{\mathcal{P}}(\boldsymbol{\eta}_{\mathcal{P}})+\psi_{\mathcal{E}}(\boldsymbol{\eta}_{\mathcal{E}})
\]
This allows the definition of a Bregman divergence $D_{\psi}$, such
that $D_{\psi}$ induces Riemannian metric over $(X,\,\mathcal{U})$.
By substitution of equation \ref{eq:lvm:Lemma41:eq6} it follows that
the application of $\boldsymbol{\eta}$ to $(X,\,\mathcal{U})$ yields
a parametric representation, which is given by:
\begin{align*}
q_{\boldsymbol{\eta}}(\boldsymbol{v},\,\boldsymbol{h}) & =\exp\left(\int_{\mathbb{R}^{l}}\eta_{s}(\boldsymbol{v},\,\boldsymbol{r})\delta(\boldsymbol{h}-\boldsymbol{r})\mathrm{d}\boldsymbol{r}\right.\\
 & \left.+\boldsymbol{\eta}_{\mathcal{P}}\cdot(\boldsymbol{v},\,\boldsymbol{h})^{T}-\psi(\boldsymbol{\eta})\right)
\end{align*}
This shows that $(X,\,\mathcal{U}_{\boldsymbol{\eta}})$ is an exponential
family and furthermore, that $(X,\,\mathcal{U}_{\boldsymbol{\eta}},\,D_{\psi})$
is a dually flat statistical manifold, that covers $(X,\,\mathcal{P})$
and $(X,\,\mathcal{E})$ as smooth submanifolds. Since the projections
$\boldsymbol{\eta}\to\boldsymbol{\eta}_{\mathcal{P}}$ and $\boldsymbol{\eta}\to\boldsymbol{\eta}_{\mathcal{E}}$
are linear in the $e$-parametrisation $(X,\,\mathcal{P})$ and $(X,\,\mathcal{E})$
are $e$-flat with respect to the induced metric $D_{\psi}$. Let
$\boldsymbol{\mu}_{\mathcal{P}}$ be the expectation parameters of
$(X,\,\mathcal{P})$ and $\boldsymbol{\mu}_{\mathcal{E}}=(\mu_{s}(\boldsymbol{v},\,\boldsymbol{h})_{v,h})^{T}$
expectation coefficients $(X,\,\mathcal{E})$, then the dual parametrisation
$\boldsymbol{\mu}$ is given by:
\begin{eqnarray*}
\boldsymbol{\mu} & = & \nabla\psi_{\mathcal{P}}(\boldsymbol{\eta}_{\mathcal{P}})+\nabla\psi_{\mathcal{E}}(\boldsymbol{\eta}_{\mathcal{E}})\\
 & = & (\boldsymbol{\mu}_{\mathcal{P}},\,0)^{T}+(0,\,\boldsymbol{\mu}_{\mathcal{E}})^{T}\\
 & = & (\boldsymbol{\mu}_{\mathcal{P}},\,\boldsymbol{\mu}_{\mathcal{E}})^{T}
\end{eqnarray*}
Since the projections $\boldsymbol{\mu}\to\boldsymbol{\mu}_{\mathcal{P}}$
and $\boldsymbol{\mu}\to\boldsymbol{\mu}_{\mathcal{E}}$ are linear
in the $m$-parametrisation $(X,\,\mathcal{P})$ and $(X,\,\mathcal{E})$
are $m$-flat with respect to the induced metric $D_{\psi}$. Therefore
$(X,\,\mathcal{P},\,D_{\psi})$ and $(X,\,\mathcal{E},\,D_{\psi})$
are a dually flat submanifolds of $(X,\,\mathcal{U},\,D_{\psi})$.
\end{proof}

\section{Maximum Likelihood Estimation in Exponential Families with Latent
Variables}

Due to the existence of a simply connected embedding space $(X,\,\mathcal{U},\,D_{\varphi})$
that covers $(X,\,\mathcal{P})$ as well as $(X,\,\mathcal{E})$,
also arbitrary smooth submanifolds $(X,\,\mathcal{Q})$ of $(X,\,\mathcal{P})$
may be connected to submanifolds $(X,\,\mathcal{E}_{\boldsymbol{\sigma}})$
of $(X,\,\mathcal{E})$, given by a repeated observation $\boldsymbol{\sigma}$.
Since $(X,\,\mathcal{U},\,D_{\varphi})$ is furthermore a Riemannian
statistical manifold and $(X,\,\mathcal{Q})$ and $(X,\,\mathcal{E}_{\boldsymbol{\sigma}})$
are smooth submanifolds of $(X,\,\mathcal{U},\,D_{\varphi})$ geodesics
between $(X,\,\mathcal{Q})$ and $(X,\,\mathcal{E}_{\boldsymbol{\sigma}})$
are given by the induced Riemannian metric $D_{\varphi}$. 
\begin{thm}
\label{thm:4.2}Let $(X,\,\mathcal{P})$ be an exponential family
over a partially observable measurable space $(X,\,\Sigma)$ and $(X,\,\mathcal{E})$
an absolutely continuous empirical model over $(X,\,\Sigma)$. Let
further be $(X,\,\mathcal{Q})$ a smooth submanifold of $(X,\,\mathcal{P})$,
and $\boldsymbol{\sigma}$ a repeated observation in $(X,\,\Sigma)$.
Then a maximum likelihood estimation of $(X,\,\mathcal{Q})$ respective
to $\boldsymbol{\sigma}$ is given by a minimal geodesic projection
of $(X,\,\mathcal{E}_{\boldsymbol{\sigma}})$ to $(X,\,\mathcal{Q})$.
\end{thm}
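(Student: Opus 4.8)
The plan is to reduce the latent-variable estimation to the dually flat geometry already established and then to identify the incomplete-data log-likelihood with a divergence between two flat submanifolds. First I would invoke Lemma~\ref{lem:4.17} to obtain the dually flat embedding $(X,\,\mathcal{U},\,D_{\psi})$ that contains both the model $(X,\,\mathcal{P})$ and the absolutely continuous empirical model $(X,\,\mathcal{E})$ as dually flat submanifolds. Within this common ambient manifold the smooth submanifold $(X,\,\mathcal{Q})\subseteq(X,\,\mathcal{P})$ inherits the $e$-affine (natural) parametrisation from $\mathcal{P}$, while the fibre $(X,\,\mathcal{E}_{\boldsymbol\sigma})$ of empirical distributions compatible with the observation $\boldsymbol\sigma$ is, by Lemma~\ref{lem:4.16} and equation~\ref{eq:lvm:Lemma41:eq4}, the set of densities whose observable marginal is fixed to $\hat p(\boldsymbol v)=\frac{1}{n}\sum_i\delta(\boldsymbol v-\boldsymbol s_i)$ while the latent conditional $p(\boldsymbol h\mid\boldsymbol v)$ remains free. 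Since fixing the marginal and varying the conditional is a linear constraint on the mixing coefficients $\mu_s$, that is, on the expectation parameters, $(X,\,\mathcal{E}_{\boldsymbol\sigma})$ is $m$-flat in $(X,\,\mathcal{U},\,D_{\psi})$.

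Next I would express the maximum likelihood estimate through the incomplete-data (marginal) log-likelihood. Marginalising the latent variable gives $\log\mathrm{L}[Q_{\theta}\mid\boldsymbol\sigma]=\sum_i\log\int_H q_{\theta}(\boldsymbol v_i,\,\boldsymbol h)\mathrm{d}\boldsymbol h$, so that by Lemma~\ref{lem:4.14} the estimator maximises $\sum_i\log q_{\theta}(\boldsymbol v_i)$. The central computation is the chain-rule decomposition of the divergence from a member of $\mathcal{E}_{\boldsymbol\sigma}$ to a model point: for $P_{\boldsymbol\sigma}\in\mathcal{E}_{\boldsymbol\sigma}$ with conditional $p(\boldsymbol h\mid\boldsymbol v)$ one obtains $D_{\mathrm{KL}}[P_{\boldsymbol\sigma}\parallel Q_{\theta}]=D_{\mathrm{KL}}[\hat p\parallel q_{\theta}(\boldsymbol v)]+\int\hat p(\boldsymbol v)\,D_{\mathrm{KL}}[p(\cdot\mid\boldsymbol v)\parallel q_{\theta}(\cdot\mid\boldsymbol v)]\mathrm{d}\boldsymbol v$. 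The conditional term is nonnegative and vanishes exactly when $p(\boldsymbol h\mid\boldsymbol v)=q_{\theta}(\boldsymbol h\mid\boldsymbol v)$, so minimising over $\mathcal{E}_{\boldsymbol\sigma}$ leaves the observable marginal divergence $D_{\mathrm{KL}}[\hat p\parallel q_{\theta}(\boldsymbol v)]$, whose minimiser in $\theta$ is, up to the constant $\mathrm{H}[\hat p\mid V]$ and in analogy with Lemma~\ref{lem:4.15}, exactly the maximiser of the marginal log-likelihood. Hence $\hat\theta_{\mathrm{ML}}$ is the $\mathcal{Q}$-coordinate of the pair realising the double minimum $\min_{Q\in\mathcal{Q}}\min_{P_{\boldsymbol\sigma}\in\mathcal{E}_{\boldsymbol\sigma}}D_{\mathrm{KL}}[P_{\boldsymbol\sigma}\parallel Q]$.

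Finally I would identify this double minimisation with the minimal geodesic projection asserted by the theorem. Since $(X,\,\mathcal{E}_{\boldsymbol\sigma})$ is $m$-flat and $(X,\,\mathcal{Q})$ is $e$-flat in $(X,\,\mathcal{U},\,D_{\psi})$, the projection theorem together with Corollary~\ref{cor:4.1} and the generalised Pythagorean relation force any minimising pair $(P^{\ast},\,Q^{\ast})$ to be joined by a geodesic that meets $\mathcal{E}_{\boldsymbol\sigma}$ and $\mathcal{Q}$ orthogonally; this is precisely the \emph{minimal geodesic projection} of $(X,\,\mathcal{E}_{\boldsymbol\sigma})$ onto $(X,\,\mathcal{Q})$, whose endpoint $Q^{\ast}$ is $\hat\theta_{\mathrm{ML}}$. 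The main obstacle I anticipate is analytic rather than geometric: because $(X,\,\mathcal{E})$ is the infinite-dimensional exponential family of Lemma~\ref{lem:4.16}, one must verify that the projection theorem, the orthogonality characterisation, and the existence of the minimising pair persist in this infinite-dimensional setting, and that the alternating $e$- and $m$-projections constituting the information-geometric $em$-procedure indeed converge to the minimal geodesic rather than to a merely stationary pair.
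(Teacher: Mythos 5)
Your proposal is correct in substance and reaches the paper's conclusion, but the central computation is genuinely different from the paper's. The paper works with the joint empirical density $p_{\boldsymbol\eta}(\boldsymbol v,\,\boldsymbol h)=p_{\boldsymbol\eta}(\boldsymbol v)p_{\boldsymbol\eta}(\boldsymbol h\mid\boldsymbol v)$ and splits $D_{\mathrm{KL}}[P_{\boldsymbol\eta}\parallel Q_{\boldsymbol\eta}]$ into three functionals $F_1+F_2+F_3$ (negative marginal entropy, negative conditional entropy, and the cross term $-\int p\log q$), asserts that $F_1$ and $F_2$ are completely determined by $\boldsymbol\sigma$, discards them, and feeds the surviving cross term into Lemma~\ref{lem:4.15} to recover the likelihood. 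You instead use the chain rule for the Kullback--Leibler divergence,
\[
D_{\mathrm{KL}}[P\parallel Q_\theta]=D_{\mathrm{KL}}[\hat p\parallel q_\theta(\boldsymbol v)]+\int\hat p(\boldsymbol v)\,D_{\mathrm{KL}}[p(\cdot\mid\boldsymbol v)\parallel q_\theta(\cdot\mid\boldsymbol v)]\,\mathrm{d}\boldsymbol v,
\]
minimise the conditional term to zero over the fibre $\mathcal{E}_{\boldsymbol\sigma}$ by matching conditionals, and identify the remaining marginal divergence with the incomplete-data likelihood. This buys you two things. First, your decomposition makes the dependence on the free conditional $p(\boldsymbol h\mid\boldsymbol v)$ explicit and removes it by minimisation, whereas the paper's claim that $F_2$ is determined by $\boldsymbol\sigma$ alone is not actually true as stated (the conditional entropy varies over $\mathcal{E}_{\boldsymbol\sigma}$), so your route repairs a genuine weak point in the paper's argument. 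Second, you pin down the estimator as the standard marginal (incomplete-data) MLE, $\sum_i\log\int_H q_\theta(\boldsymbol v_i,\,\boldsymbol h)\,\mathrm{d}\boldsymbol h$, which is the statistically meaningful object for latent variables, while the paper's appeal to Lemma~\ref{lem:4.15} treats the joint empirical density as if it were observed. Your explicit argument that $\mathcal{E}_{\boldsymbol\sigma}$ is $m$-flat and your caveat about the infinite-dimensional setting are likewise absent from the paper and strengthen the treatment. One caution: in your final step you assume $(X,\,\mathcal{Q})$ is $e$-flat, but this theorem only hypothesises a smooth submanifold --- $e$-flatness is the hypothesis of Corollary~\ref{cor:4.3}, not of Theorem~\ref{thm:4.2}; for the statement as given you should invoke only the projection theorem's orthogonality characterisation for smooth submanifolds, as the paper does, and forgo uniqueness or global minimality of the projection.
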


\begin{center}
\begin{figure}[h]
\begin{centering}
\def\svgwidth{\columnwidth} 
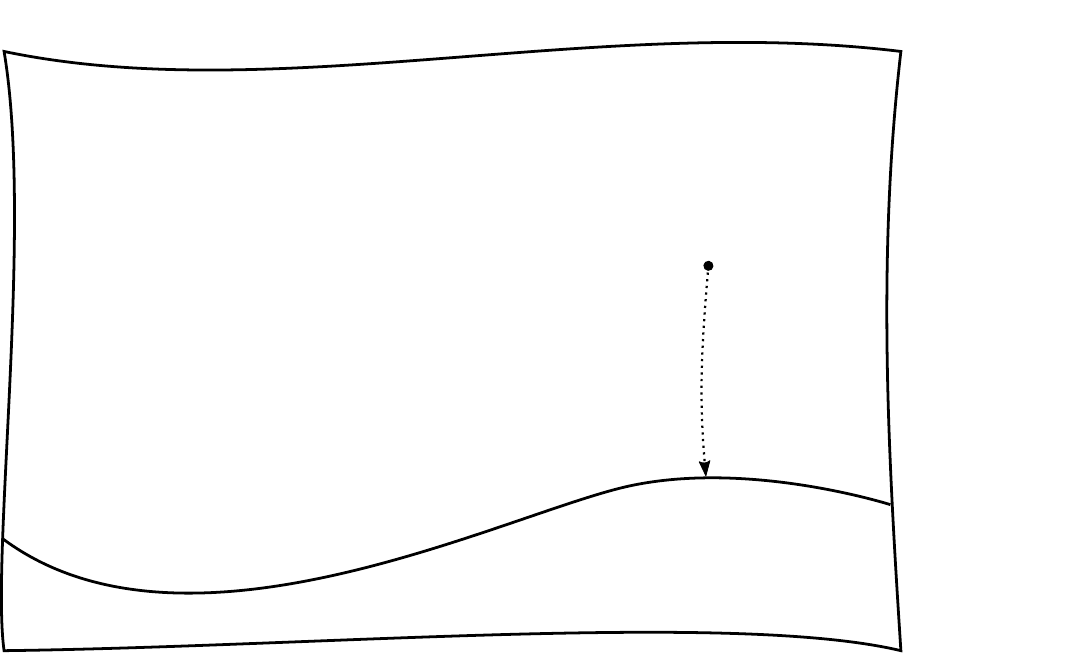
\par\end{centering}
\caption{ML estimation in latent variable Exponential Families}
\end{figure}
\par\end{center}
\begin{proof}
Since $(X,\,\mathcal{P})$ is an exponential family and $(X,\,\mathcal{E})$
a continuous empirical model, there exists a common dually flat embedding
space $(X,\,\mathcal{U}_{\eta},\,D_{\psi})$, such that $(X,\,\mathcal{P})$
and $(X,\,\mathcal{E})$ are dually flat submanifolds, with respect
to the Bregman divergence $D_{\psi}$. Since the maximum likelihood
estimation of $\mathcal{Q}$ with respect to the repeated observation
$\boldsymbol{\sigma}$ is independent of its chosen parametrisation,
it may be obtained by the natural parametrisation of the embedding
space $(X,\,\mathcal{U}_{\eta},\,D_{\psi})$, such that:\emph{
\[
\hat{\theta}_{ML}\in\arg\max_{\eta}\mathrm{L}[Q_{\eta}\mid\boldsymbol{\sigma}]
\]
}Then theorem has an equivalent formulation, given by:
\begin{equation}
\arg\max_{\eta}\mathrm{L}[Q_{\eta}\mid\boldsymbol{\sigma}]=\arg\min_{\eta}d(P_{\eta},\,Q_{\eta})\label{eq:thm:4.1:1}
\end{equation}
Since $(X,\,\mathcal{Q})$ is a smooth submanifold of $(X,\,\mathcal{P})$
and $(X,\,\mathcal{P})$ of $(X,\,\mathcal{U})$ it follows, that
$(X,\,\mathcal{Q})$ is also a smooth submanifold of $(X,\,\mathcal{U})$
and since $(X,\,\mathcal{U},\,D_{\psi})$ is a dually flat statistical
manifold, the projection theorem is satisfied. Therefore the geodesic
projection $\pi:\mathcal{E}_{\boldsymbol{\sigma}}\to\mathcal{Q}$
of a fixed point $P\in\mathcal{E}_{\boldsymbol{\sigma}}$ to $\mathcal{Q}$
is given by the minimal dual affine projection, and thus by a point
$Q_{\eta}\in\mathcal{Q}$, that minimizes $D_{\psi}[Q_{\eta}\parallel P]$,
such that: 
\[
\arg\min_{\eta}d(P,\,Q_{\eta})=\arg\min_{\eta}D_{\psi}[Q_{\eta}\parallel P]
\]
Then the minimal geodesic projection of $\mathcal{E}_{\boldsymbol{\sigma}}$
to $\mathcal{Q}$ is given by points $P_{\boldsymbol{\eta}}\in\mathcal{E}_{\boldsymbol{\sigma}}$
and $Q_{\boldsymbol{\eta}}\in\mathcal{Q}$, that minimize $D_{\psi}[Q_{\boldsymbol{\eta}}\parallel P_{\boldsymbol{\eta}}]$:
\begin{equation}
\arg\min_{\boldsymbol{\eta}}d(P_{\boldsymbol{\eta}},\,Q_{\boldsymbol{\eta}})=\arg\min_{\boldsymbol{\eta}}D_{\psi}[Q_{\boldsymbol{\eta}}\parallel P_{\boldsymbol{\eta}}]\label{eq:thm:4.1:2}
\end{equation}
Since $\boldsymbol{\eta}$ is the natural parametrisation of the exponential
family $(X,\,\mathcal{U})$, the Bregman divergence $D_{\psi}$ is
given by the dual Kullback-Leibler divergence in natural parameters.
Therefore it follows, that:
\begin{equation}
\arg\min_{\boldsymbol{\eta}}D_{\psi}[Q_{\boldsymbol{\eta}}\parallel P_{\boldsymbol{\eta}}]=\arg\min_{\boldsymbol{\eta}}D_{\mathrm{KL}}[P_{\boldsymbol{\eta}}\parallel Q_{\boldsymbol{\eta}}]\label{eq:thm:4.1:3}
\end{equation}
Without loss of generality let $\boldsymbol{v}:(X_{v},\,\Sigma_{v})\to V$
be the vectorial observable random variable and $\boldsymbol{h}:(X_{h},\,\Sigma_{h})\to H$
the vectorial latent random variable. Then $\mathrm{d}\boldsymbol{v}$
and $\mathrm{d}\boldsymbol{h}$ denote the Lebesgue measures in $V$
and $H$ and:
\begin{eqnarray*}
D_{\mathrm{KL}}[P_{\boldsymbol{\eta}}\parallel Q_{\boldsymbol{\eta}}] & = & \int_{X}p_{\boldsymbol{\eta}}(\boldsymbol{x})\log\frac{p_{\boldsymbol{\eta}}(\boldsymbol{x})}{q_{\boldsymbol{\eta}}(\boldsymbol{x})}\mathrm{d}\boldsymbol{x}\\
 & = & \int_{H,\boldsymbol{\sigma}}p_{\boldsymbol{\eta}}(\boldsymbol{v},\,\boldsymbol{h})\log\frac{p_{\boldsymbol{\eta}}(\boldsymbol{v},\,\boldsymbol{h})}{q_{\boldsymbol{\eta}}(\boldsymbol{v},\,\boldsymbol{h})}\mathrm{d}\boldsymbol{v}\mathrm{d}\boldsymbol{h}
\end{eqnarray*}
The common empirical distributions $p_{\boldsymbol{\eta}}(\boldsymbol{v},\,\boldsymbol{h})$
are defined by the marginal empirical densities $p_{\boldsymbol{\eta}}(\boldsymbol{v})$
of the observable variables and conditional transition probabilities
$p_{\boldsymbol{\eta}}(\boldsymbol{h}\mid\boldsymbol{v})$ of the
latent variables by $p_{\boldsymbol{\eta}}(\boldsymbol{v},\,\boldsymbol{h})=p_{\boldsymbol{\eta}}(\boldsymbol{v})p_{\boldsymbol{\eta}}(\boldsymbol{h}\mid\boldsymbol{v})$,
such that:
\begin{eqnarray*}
 &  & D_{\mathrm{KL}}[P_{\boldsymbol{\eta}}\parallel Q_{\boldsymbol{\eta}}]\\
 & = & \int_{H,\boldsymbol{\sigma}}p_{\boldsymbol{\eta}}(\boldsymbol{v})p_{\boldsymbol{\eta}}(\boldsymbol{h}\mid\boldsymbol{v})\log\frac{p_{\boldsymbol{\eta}}(\boldsymbol{v})p_{\boldsymbol{\eta}}(\boldsymbol{h}\mid\boldsymbol{v})}{q_{\boldsymbol{\eta}}(\boldsymbol{v},\,\boldsymbol{h})}\mathrm{d}\boldsymbol{v}\mathrm{d}\boldsymbol{h}
\end{eqnarray*}
Furthermore the $\log$ function allows to write the product into
a sum and to substitute the addends by functionals, such that:
\begin{eqnarray*}
 &  & D_{\mathrm{KL}}[P_{\boldsymbol{\eta}}\parallel Q_{\boldsymbol{\eta}}]\\
 & = & F_{1}(P_{\boldsymbol{\eta}},\,Q_{\boldsymbol{\eta}})+F_{2}(P_{\boldsymbol{\eta}},\,Q_{\boldsymbol{\eta}})+F_{3}(P_{\boldsymbol{\eta}},\,Q_{\boldsymbol{\eta}})
\end{eqnarray*}
With:
\begin{eqnarray*}
 &  & F_{1}(P_{\boldsymbol{\eta}},\,Q_{\boldsymbol{\eta}})\\
 &  & \coloneqq\int_{H,\boldsymbol{\sigma}}p_{\boldsymbol{\eta}}(\boldsymbol{v})p_{\boldsymbol{\eta}}(\boldsymbol{h}\mid\boldsymbol{v})\log p_{\boldsymbol{\eta}}(\boldsymbol{v})\mathrm{d}\boldsymbol{v}\mathrm{d}\boldsymbol{h}\\
 &  & F_{2}(P_{\boldsymbol{\eta}},\,Q_{\boldsymbol{\eta}})\\
 &  & \coloneqq\int_{H,\boldsymbol{\sigma}}p_{\boldsymbol{\eta}}(\boldsymbol{v})p_{\boldsymbol{\eta}}(\boldsymbol{h}\mid\boldsymbol{v})\log p_{\boldsymbol{\eta}}(\boldsymbol{h}\mid\boldsymbol{v})\mathrm{d}\boldsymbol{v}\mathrm{d}\boldsymbol{h}\\
 &  & F_{3}(P_{\boldsymbol{\eta}},\,Q_{\boldsymbol{\eta}})\\
 &  & \coloneqq-\int_{H,\boldsymbol{\sigma}}p_{\boldsymbol{\eta}}(\boldsymbol{v})p_{\boldsymbol{\eta}}(\boldsymbol{h}\mid\boldsymbol{v})\log q_{\boldsymbol{\eta}}(\boldsymbol{v},\,\boldsymbol{h})\mathrm{d}\boldsymbol{v}\mathrm{d}\boldsymbol{h}
\end{eqnarray*}
Then it follows, that:
\begin{eqnarray*}
 &  & F_{1}(P_{\boldsymbol{\eta}},\,Q_{\boldsymbol{\eta}})\\
 &  & =\int_{\boldsymbol{\sigma}}p_{\boldsymbol{\eta}}(\boldsymbol{v})\log p_{\boldsymbol{\eta}}(\boldsymbol{v})\left(\int_{H}p_{\boldsymbol{\eta}}(\boldsymbol{h}\mid\boldsymbol{v})\mathrm{d}\boldsymbol{h}\right)\mathrm{d}\boldsymbol{v}\\
 &  & =\int_{\boldsymbol{\sigma}}p_{\boldsymbol{\eta}}(\boldsymbol{v})\log p_{\boldsymbol{\eta}}(\boldsymbol{v})\mathrm{d}\boldsymbol{v}\\
 &  & =-\mathrm{H}[P_{\eta}\mid\boldsymbol{\sigma}]
\end{eqnarray*}
And furthermore::
\begin{eqnarray*}
 &  & F_{2}(P_{\boldsymbol{\eta}},\,Q_{\boldsymbol{\eta}})\\
 &  & =\int_{\boldsymbol{\sigma}}p_{\boldsymbol{\eta}}(\boldsymbol{v})\left(\int_{H}p_{\boldsymbol{\eta}}(\boldsymbol{h}\mid\boldsymbol{v})\log p_{\boldsymbol{\eta}}(\boldsymbol{h}\mid\boldsymbol{v})\mathrm{d}\boldsymbol{h}\right)\mathrm{d}\boldsymbol{v}\\
 &  & =-\int_{\boldsymbol{\sigma}}p_{\boldsymbol{\eta}}(\boldsymbol{v})\mathrm{H}[p(\boldsymbol{h}\mid\boldsymbol{v})\mid H]\mathrm{d}\boldsymbol{v}\\
 &  & =-\mathrm{H}[p(\boldsymbol{h}\mid\boldsymbol{v})\mid(H,\,\boldsymbol{\sigma})]
\end{eqnarray*}
Then $F_{1}$ and $F_{2}$ are completely determined by $\boldsymbol{\sigma}$,
such that their minima do not depend on the choice of $P_{\boldsymbol{\eta}}$
and $Q_{\boldsymbol{\eta}}$. Therefore:
\begin{eqnarray}
 &  & \arg\min_{\boldsymbol{\eta}}D_{\mathrm{KL}}[P_{\boldsymbol{\eta}}\parallel Q_{\boldsymbol{\eta}}]\label{eq:thm:4.1:4}\\
 &  & =\arg\min_{\boldsymbol{\eta}}F_{3}(P_{\boldsymbol{\eta}},\,Q_{\boldsymbol{\eta}})\nonumber \\
 &  & =\arg\max_{\boldsymbol{\eta}}\int_{H,\boldsymbol{\sigma}}p_{\boldsymbol{\eta}}(\boldsymbol{v})p_{\boldsymbol{\eta}}(\boldsymbol{h}\mid\boldsymbol{v})\log q_{\boldsymbol{\eta}}(\boldsymbol{v},\,\boldsymbol{h})\mathrm{d}\boldsymbol{v}\mathrm{d}\boldsymbol{h}\nonumber \\
 &  & =\arg\max_{\boldsymbol{\eta}}\int_{X}p_{\boldsymbol{\eta}}(x)\log q_{\boldsymbol{\eta}}(x)\mathrm{d}\mu(x)\nonumber 
\end{eqnarray}
Since $p_{\boldsymbol{\eta}}(x)$ are the empirical probabilities
over $(X,\,\Sigma)$ the conditions of lemma , equation $\ref{eq:lem:4.3:1}$,
are satisfied, such that
\begin{eqnarray*}
 &  & \arg\max_{\boldsymbol{\eta}}\mathrm{L}[Q_{\boldsymbol{\eta}}\mid\boldsymbol{\sigma}]\\
 &  & \stackrel{}{=}\arg\max_{\boldsymbol{\eta}}\int_{X}p_{\boldsymbol{\eta}}(x)\log q_{\boldsymbol{\eta}}(x)\mathrm{d}\mu(x)\\
 &  & \stackrel{\ref{eq:thm:4.1:4}}{=}\arg\min_{\boldsymbol{\eta}}D_{\mathrm{KL}}[P_{\boldsymbol{\eta}}\parallel Q_{\boldsymbol{\eta}}]\\
 &  & \stackrel{\ref{eq:thm:4.1:3}}{=}\arg\min_{\boldsymbol{\eta}}D_{\psi}[Q_{\boldsymbol{\eta}}\parallel P_{\boldsymbol{\eta}}]\\
 &  & \stackrel{\ref{eq:thm:4.1:2}}{=}\arg\min_{\boldsymbol{\eta}}d(P_{\boldsymbol{\eta}},\,Q_{\boldsymbol{\eta}})
\end{eqnarray*}
\end{proof}

\section{Alternating minimization}

Let $(X,\,\mathcal{P},\,D_{\psi})$ be a dually flat statistical manifold
and $(X,\,\mathcal{Q})$ and $(X,\,\mathcal{E})$ smooth submanifolds
of $(X,\,\mathcal{P},\,D_{\psi})$. Then the divergence $D_{\psi}$
between $(X,\,\mathcal{Q})$ to $(X,\,\mathcal{E})$ is defined by
the minimal divergence of its respective points, such that:
\[
D_{\psi}[\mathcal{Q}\parallel\mathcal{S}]=\min_{Q\in\mathcal{Q},\,S\in\mathcal{S}}D_{\psi}[Q\parallel S]=D_{\psi}[\hat{Q}\parallel\hat{S}]
\]
where $\hat{Q}\in\mathcal{Q}$ and $\hat{S}\in\mathcal{S}$ minimize
$D_{\psi}[Q\parallel S]$. By applying Amari's projection
theorem it follows, that the pair $(\hat{Q},\,\hat{S})$ also minimizes
the geodesic distance and therefore has to be regarded as a pair of
closest points between $(X,\,\mathcal{Q})$ and $(X,\,\mathcal{S})$.
Furthermore the dually flat structure allows an iterative alternating
geodesic projection between $(X,\,\mathcal{Q})$ and $(X,\,\mathcal{S})$
to asymptotically approximate $\hat{Q}$ and $\hat{S}$. This is termed
alternating minimization.
\begin{defn*}[\emph{Alternating minimization}]
\label{def:Alternating-minimization-algorithm} Let $(X,\,\mathcal{U})$
be an exponential family with smooth submanifolds $(X,\,\mathcal{Q})$
and $(X,\,\mathcal{S})$. Then the alternating minimization from $\mathcal{S}$
to $\mathcal{Q}$ iteratively defines a sequence $(Q_{n},\,S_{n})_{n\in\mathbb{N}_{0}}$
of elements in $(\mathcal{Q},\,\mathcal{S})$, which is given by\emph{:}

\textbf{Begin}:

\emph{Let $S_{0}\in\mathcal{S}$ be arbitrary}

\textbf{Iteration}\emph{:}

($m$-step)\emph{ $Q_{n}$ is given by a geodesic projection of $S_{n}$
to $\mathcal{Q}$:
\[
Q_{n}=\pi(Q_{n})=\arg\min_{Q\in\mathcal{Q}}d(S_{n},\,Q)
\]
}

($e$-step)\emph{ $S_{n+1}$ is given by a dual geodesic projection
of $Q_{n}$ to $\mathcal{S}$:
\[
S_{n+1}=\pi^{*}(Q_{n})=\arg\min_{S\in\mathcal{S}}d(S,\,Q_{n})
\]
}
\end{defn*}

\begin{thm}[\emph{Alternating minimization}]
\emph{\label{thm:Alternating-minimization}} Let $(X,\,\mathcal{U})$
be an exponential family model with smooth submanifolds $(X,\,\mathcal{Q})$
and $(X,\,\mathcal{S})$. Then the alternating minimization algorithm
converges in a pair of probability distributions, that locally minimize
the geodesic distance between $(X,\,\mathcal{Q})$ and $(X,\,\mathcal{S})$.
\end{thm}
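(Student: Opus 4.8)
The plan is to exhibit the algorithm as a monotone descent on the divergence and then identify its limit with a stationary, hence locally minimal, pair by means of the projection theorem. First I would invoke Theorem~\ref{thm:Structure-of-exponential-families} to regard $(X,\,\mathcal{U})$ as a dually flat statistical manifold $(X,\,\mathcal{U}_{\eta},\,D_{\psi})$, so that the geodesic distance $d$ appearing in the $m$- and $e$-steps is governed by the Bregman divergence $D_{\psi}$ and each half-step is a genuine geodesic projection in the sense of the projection theorem, whose well-definedness on $e$-flat and $m$-flat pieces is guaranteed by Corollary~\ref{cor:4.1}. Writing $D_{n}\coloneqq d(S_{n},\,Q_{n})$ for the divergence attained after the $n$-th $m$-step, the goal is to show that $(D_{n})_{n\in\mathbb{N}}$ converges and that the associated iterates accumulate at a pair $(\hat{Q},\,\hat{S})$ realising a local minimum of $d$ on $\mathcal{Q}\times\mathcal{S}$.

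The first, and routine, step is monotonicity. Since $S_{n+1}$ minimises $d(\cdot,\,Q_{n})$ over $\mathcal{S}$ and $S_{n}\in\mathcal{S}$, one has $d(S_{n+1},\,Q_{n})\le d(S_{n},\,Q_{n})$; since $Q_{n+1}$ minimises $d(S_{n+1},\,\cdot)$ over $\mathcal{Q}$ and $Q_{n}\in\mathcal{Q}$, one has $d(S_{n+1},\,Q_{n+1})\le d(S_{n+1},\,Q_{n})$. Chaining these gives $D_{n+1}\le D_{n}$, and as $D_{n}\ge 0$ by non-negativity of the divergence, the monotone convergence theorem yields a limit $D_{\infty}\ge 0$.

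Next I would upgrade convergence of the values to control of the iterates. Applying the generalised Pythagorean relation to the dual projection $S_{n+1}=\pi^{*}(Q_{n})$ with the comparison point $S_{n}\in\mathcal{S}$, and to the projection $Q_{n+1}=\pi(S_{n+1})$ with the comparison point $Q_{n}\in\mathcal{Q}$, the per-step decreases split as divergences between consecutive iterates, $d(S_{n},\,Q_{n})-d(S_{n+1},\,Q_{n})=d(S_{n},\,S_{n+1})$ and $d(S_{n+1},\,Q_{n})-d(S_{n+1},\,Q_{n+1})=d(Q_{n},\,Q_{n+1})$. Telescoping over $n$ bounds $\sum_{n}\big(d(S_{n},\,S_{n+1})+d(Q_{n},\,Q_{n+1})\big)$ by $D_{0}-D_{\infty}<\infty$, so both $d(S_{n},\,S_{n+1})\to 0$ and $d(Q_{n},\,Q_{n+1})\to 0$. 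Under the standing smoothness, together with a compactness assumption on the relevant sublevel sets needed to extract a convergent subsequence, this forces the iterates to settle at an accumulation pair $(\hat{Q},\,\hat{S})$ with $\hat{Q}=\pi(\hat{S})$ and $\hat{S}=\pi^{*}(\hat{Q})$.

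Finally I would read off local minimality from these fixed-point relations. By the projection theorem the identities $\hat{Q}=\pi(\hat{S})$ and $\hat{S}=\pi^{*}(\hat{Q})$ are equivalent to the $m$-geodesic from $\hat{S}$ to $\hat{Q}$ meeting $\mathcal{Q}$ orthogonally and the $e$-geodesic from $\hat{Q}$ to $\hat{S}$ meeting $\mathcal{S}$ orthogonally; these are exactly the first-order stationarity conditions for $(Q,\,S)\mapsto d(S,\,Q)$ on $\mathcal{Q}\times\mathcal{S}$. Combined with the monotone descent established above, $(\hat{Q},\,\hat{S})$ is a local minimiser, which is the assertion. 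The main obstacle is precisely the passage from convergence of the divergence values to convergence of the points and to genuine, not merely stationary, local minimality: because $\mathcal{Q}$ and $\mathcal{S}$ are only smooth, the uniqueness of projections furnished by Corollary~\ref{cor:4.1} holds for $e$-flat and $m$-flat submanifolds but not in general, so the Pythagorean splitting is only locally exact and no global conclusion is available without a compactness or convexity hypothesis to preclude escape to infinity or convergence to a saddle.
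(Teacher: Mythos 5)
Your proposal shares its skeleton with the paper's proof: both arguments exhibit the iteration as a monotone descent of the divergence, bounded below by zero, and hence convergent in value. The paper obtains the two descent inequalities $d(Q_{n},\,S_{n})\le d(Q_{n-1},\,S_{n})$ and $d(Q_{n},\,S_{n+1})\le d(Q_{n},\,S_{n})$ from the generalized Pythagorean theorem applied to each half-step, and then stops, asserting that the sequence ``converges against a local minimum.'' Your monotonicity step is the more elementary comparison-with-the-previous-iterate argument (the projection minimizes, and the previous iterate is an admissible competitor), which already suffices; but you then go genuinely further than the paper. The telescoping of the Pythagorean decrements to force $d(S_{n},\,S_{n+1})\to 0$ and $d(Q_{n},\,Q_{n+1})\to 0$, the extraction of an accumulation pair $(\hat{Q},\,\hat{S})$ under a compactness hypothesis, and the identification of that pair as a joint fixed point of $\pi$ and $\pi^{*}$ --- hence stationary by the orthogonality characterization of projections --- have no counterpart in the paper, whose proof never controls the iterates themselves and never bridges the gap between convergence of the \emph{values} $d(Q_{n},\,S_{n})$ and local minimality of a limiting \emph{pair}. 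What each approach buys is therefore clear: the paper's argument is shorter but proves strictly less than the theorem claims (a bounded monotone sequence of divergences converges, but that alone yields neither convergence of $(Q_{n},\,S_{n})$ nor any minimality property); your argument, at the price of the two hypotheses you explicitly flag (compactness of sublevel sets to preclude escape to infinity, and the fact that for merely smooth submanifolds the Pythagorean splitting is only locally exact and stationarity does not exclude saddles), actually delivers the stated conclusion. Those flagged caveats are not defects of your write-up: they are exactly the gaps the paper's own proof commits silently, and they explain why the global statement is reserved for Corollary \ref{cor:4.2}, where $e$-flatness and $m$-flatness restore unique projections via Corollary \ref{cor:3.2}.
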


\begin{figure}[h]
\begin{centering}
\def\svgwidth{\columnwidth} 
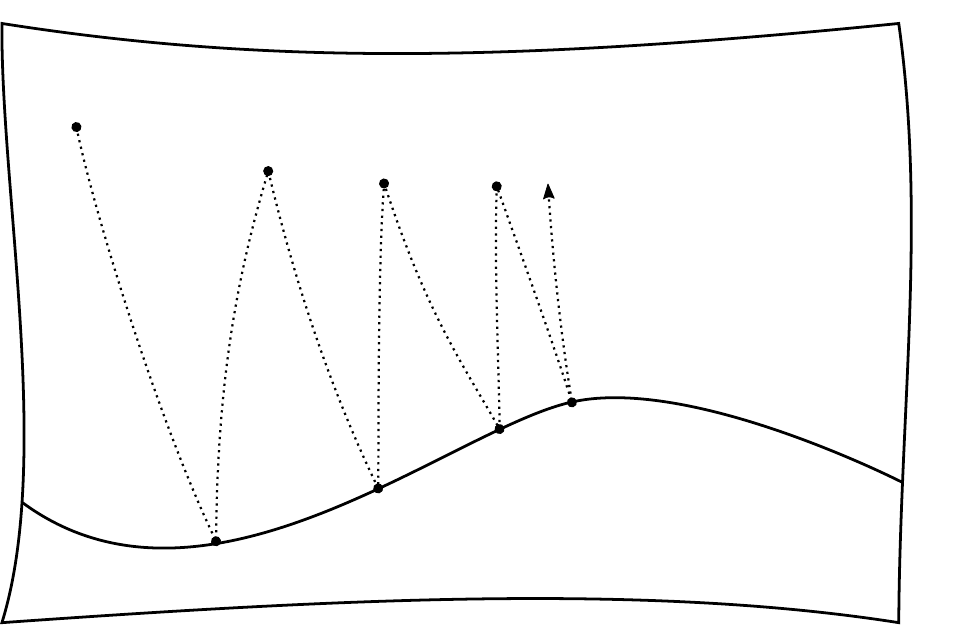
\par\end{centering}
\caption{Alternating minimization in Exponential Families}
\end{figure}

\begin{proof}
Let $(X,\,\mathcal{U})$ be an exponential family, then the Bregman
divergence of its cumulative generating function $\psi$ induces a
Riemannian metric and $(X,\,\mathcal{U},\,D_{\psi})$ is a simply
connected dually flat statistical manifold. Then $(X,\,\mathcal{Q},\,D_{\psi})$
and $(X,\,\mathcal{S},\,D_{\psi})$ are Riemannian submanifolds of
$(X,\,\mathcal{U},\,D_{\psi})$, with respect to the induced Riemannian
metric. In the $m$-step, the geodesic projection $\pi:\mathcal{S}\to\mathcal{Q}$
is given by the dual affine projection in the $m$-parametrisation.
Then the dual affine projection minimizes the Bregman divergence and
the Pythagorean theorem yields:
\begin{align}
 & d(Q_{n},\,S_{n})\label{eq:thm:4.3:1}\\
 & =D_{\psi}[S_{n}\parallel\pi(S_{n})]\nonumber \\
 & \leq D_{\psi}[S_{n}\parallel\pi(S_{n})]+D_{\psi}[\pi(S_{n})\parallel R]\nonumber \\
 & =D_{\psi}[S_{n}\parallel Q_{n-1}]=d(Q_{n-1},\,S_{n})\nonumber 
\end{align}
In the $e$-step dual geodesic projection $\pi^{*}:\mathcal{Q}\to\mathcal{S}$
is given by the affine projection in the $e$-parametrisation. Then
the affine projection minimizes the dual Bregman divergence and the
Pythagorean theorem yields:
\begin{align}
 & d(Q_{n},\,S_{n+1})\label{eq:thm:4.3:2}\\
 & =D_{\psi^{*}}[\pi(Q_{n})\parallel Q_{n}]\nonumber \\
 & \leq D_{\psi^{*}}[R\parallel\pi(Q_{n})]+D_{\psi^{*}}[\pi(Q_{n})\parallel Q_{n}]\nonumber \\
 & =D_{\psi^{*}}[S_{n}\parallel Q_{n}]=d(Q_{n},\,S_{n})\nonumber 
\end{align}
This proves, that $d(Q_{n},\,S_{n})$ monotonously decreases, since:

\begin{align*}
 & d(Q_{n+1},\,S_{n+1})\\
 & \stackrel{\ref{eq:thm:4.3:2}}{\leq}d(Q_{n},\,S_{n+1})\\
 & \stackrel{\ref{eq:thm:4.3:1}}{\leq}d(Q_{n},\,S_{n}),\,\forall n\in\mathbb{N}_{0}
\end{align*}
Furthermore $d(Q_{n},\,S_{n})$ is bounded bellow by:
\[
d(Q_{n},\,S_{n})=D_{\psi}[S_{n}\parallel S_{n}]\geq0,\,\forall n\in\mathbb{N}_{0}
\]
This proves, that $d(Q_{n},\,S_{n})$ converges against a local minimum.
\end{proof}
\begin{cor}
\label{cor:4.2}Let $(X,\,\mathcal{U})$ be an exponential family
with an $e$-flat submanifold $(X,\,\mathcal{Q})$ and an $m$-flat
submanifold $(X,\,\mathcal{S})$. Then the alternating minimization
algorithm from $\mathcal{S}$ to $\mathcal{Q}$ converges against
points, that globally minimize the geodesic distance between $(X,\,\mathcal{Q})$
and $(X,\,\mathcal{S})$.
\end{cor}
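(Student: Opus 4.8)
The plan is to read the alternating minimization as block (Gauss--Seidel) coordinate descent for the joint problem of minimizing $D_{\psi}[S\parallel Q]$ over $(Q,S)\in\mathcal{Q}\times\mathcal{S}$, and to use the two flatness hypotheses to promote the merely local minimum furnished by Theorem~\ref{thm:Alternating-minimization} to a global one. First I would record what the hypotheses buy. Since $(X,\mathcal{Q})$ is $e$-flat and $(X,\mathcal{S})$ is $m$-flat, Corollary~\ref{cor:4.1} identifies the $m$-step as the \emph{unique} $m$-affine projection $\pi_{m}$ onto $\mathcal{Q}$ and the $e$-step as the \emph{unique} $e$-affine projection $\pi_{e}$ onto $\mathcal{S}$; in particular each half-step is single-valued, so the iteration $(Q_{n},S_{n})$ is well defined and, by Theorem~\ref{thm:Alternating-minimization}, $d(Q_{n},S_{n})$ decreases monotonically and converges, with $(Q_{n},S_{n})$ accumulating at a fixed pair $(Q_{\infty},S_{\infty})$ satisfying $Q_{\infty}=\pi_{m}(S_{\infty})$ and $S_{\infty}=\pi_{e}(Q_{\infty})$.

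The decisive structural input is that flatness upgrades the Pythagorean relations used in the proof of Theorem~\ref{thm:Alternating-minimization} from local statements to \emph{global} equalities. Because $\mathcal{Q}$ is $e$-flat, the $e$-geodesic joining $Q_{n}=\pi_{m}(S_{n})$ to an arbitrary $Q\in\mathcal{Q}$ stays inside $\mathcal{Q}$ and is Fisher-orthogonal to the $m$-geodesic realizing the projection, so
\[
D_{\psi}[S_{n}\parallel Q]=D_{\psi}[S_{n}\parallel Q_{n}]+D_{\psi}[Q_{n}\parallel Q],\qquad\forall Q\in\mathcal{Q},
\]
and dually, because $\mathcal{S}$ is $m$-flat,
\[
D_{\psi}[S\parallel Q_{n}]=D_{\psi}[S\parallel S_{n+1}]+D_{\psi}[S_{n+1}\parallel Q_{n}],\qquad\forall S\in\mathcal{S}.
\]
These identities hold for \emph{every} comparison point in the respective submanifold, which is exactly what distinguishes the flat--flat case from the merely smooth case of Theorem~\ref{thm:Alternating-minimization}.

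With these equalities the global conclusion follows. Let $(\hat{Q},\hat{S})$ realize $\min_{Q\in\mathcal{Q},\,S\in\mathcal{S}}D_{\psi}[S\parallel Q]$ (the direction fixed in the proof of Theorem~\ref{thm:Alternating-minimization}); as a joint minimizer it satisfies the same fixed-point equations $\hat{Q}=\pi_{m}(\hat{S})$, $\hat{S}=\pi_{e}(\hat{Q})$. I would then feed $\hat{S}$ into the two global identities to track the auxiliary quantity $D_{\psi}[\hat{S}\parallel Q_{n}]$: the second identity (with $S=\hat{S}$) exhibits it as $D_{\psi}[\hat{S}\parallel S_{n+1}]+D_{\psi}[S_{n+1}\parallel Q_{n}]$, and combining this with the first identity across one full iteration shows that $D_{\psi}[\hat{S}\parallel Q_{n}]$ is monotonically non-increasing and bounded below by $0$. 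A telescoping of its decrements, which is summable because the sequence converges, forces the optimality gap $d(Q_{n},S_{n})-D_{\psi}[\hat{S}\parallel\hat{Q}]$ to vanish in the limit, so $(Q_{\infty},S_{\infty})$ attains the global minimum; the uniqueness of the two projections (Corollary~\ref{cor:4.1}) then makes the limit independent of the starting point $S_{0}$. By Amari's projection theorem the same pair minimizes the geodesic distance, which is the assertion.

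I expect the genuine obstacle to be precisely this promotion of a stationary pair to a global minimizer. By Lemmas~\ref{lem:4.1} and~\ref{lem:4.8} the objective $D_{\psi}[S\parallel Q]$ is convex in the $e$-coordinates of $Q$ for fixed $S$ and convex in the $m$-coordinates of $S$ for fixed $Q$, but it is only \emph{biconvex}, not jointly convex (its mixed Hessian block is $-\mathrm{I}$ while its diagonal blocks $\nabla^{2}\psi$ and $\nabla^{2}\varphi$ are evaluated at \emph{different} points, so the Schur complement need not be positive), and one therefore cannot simply invoke convex optimization to exclude spurious local minima. The work is consequently carried entirely by the global Pythagorean equalities of the second paragraph, and the delicate step is verifying that $D_{\psi}[\hat{S}\parallel Q_{n}]$ really does decrease monotonically and bounds the optimality gap. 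Keeping the book-keeping of the divergence arguments consistent---which half-step minimizes which slot of $D_{\psi}$, and hence which Pythagorean decomposition applies---is where I would be most careful.
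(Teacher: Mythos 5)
Your proposal reaches the right conclusion, but by a genuinely different route than the paper. The paper's own proof is two sentences long: it invokes Theorem~\ref{thm:Alternating-minimization} for convergence to a local minimizer, then cites the external uniqueness result (Corollary~\ref{cor:3.2}, from the companion paper --- the same result behind Corollary~\ref{cor:4.1}) to assert that for an $e$-flat/$m$-flat pair the minimizing geodesic distance is unique, so the local minimum is automatically global. All of the hard work is outsourced to that citation. You instead try to prove global optimality from scratch, in the Csisz\'ar--Tusn\'ady pattern: global Pythagorean \emph{equalities} for projections onto flat submanifolds, followed by a five-point/telescoping estimate on $D_{\psi}[\hat{S}\parallel Q_{n}]$ that forces the optimality gap to vanish. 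Your route is more self-contained and in one respect more honest than the paper's: your observation that the objective is only biconvex, so that stationarity of a pair does not by itself give globality, is exactly the point the paper glosses over when it leaps from ``each point-to-manifold projection is unique'' to ``the manifold-to-manifold minimum is unique.'' What the paper's approach buys is brevity and a clean separation of concerns; what yours buys is an actual mechanism, at the cost of the estimates below.

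Two caveats keep your sketch from being a complete proof. First, the decisive inequality --- that the decrement $D_{\psi}[\hat{S}\parallel Q_{n}]-D_{\psi}[\hat{S}\parallel Q_{n+1}]$ dominates the gap $d(Q_{n+1},\,S_{n+1})-d(\hat{Q},\,\hat{S})$ --- is asserted, not derived, and it does not drop out of your two Pythagorean identities by mere substitution; this five-point property is proved by Csisz\'ar--Tusn\'ady for two \emph{mixture-convex} sets, and $\mathcal{Q}$, being $e$-flat, is not mixture-convex, so the exponential-family equality must carry extra weight (a workable path is to apply your first identity at the fixed pair, giving $D_{\psi}[\hat{S}\parallel Q]=d(\hat{Q},\,\hat{S})+D_{\psi}[\hat{Q}\parallel Q]$ for all $Q\in\mathcal{Q}$, dually for $\mathcal{S}$, and combine). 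Second, the slot bookkeeping you flagged is indeed off relative to Lemma~\ref{lem:4.2}: since $D_{\psi}[P\parallel Q]=D_{\mathrm{KL}}[Q\parallel P]$, the equality form of the Pythagorean theorem for an $e$-flat $\mathcal{Q}$ attaches to minimizing $D_{\mathrm{KL}}[S_{n}\parallel Q]$, i.e.\ $D_{\psi}[Q\parallel S_{n}]$, over $Q$, so the arguments in your first identity should be transposed (the paper's Theorem~\ref{thm:Alternating-minimization} and Theorem~\ref{thm:4.3} use opposite conventions, so the confusion is inherited). You also assume existence of the joint minimizer $(\hat{Q},\,\hat{S})$ without any compactness hypothesis, though the paper is equally silent on this. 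None of this makes the approach wrong: as written it is a correct skeleton whose key quantitative step remains to be discharged, whereas the paper simply cites that step away.
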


\begin{proof}
Let\emph{ }$(Q_{n},\,S_{n})_{n\in\mathbb{N}_{0}}$ be a sequence,
given by the alternating minimization algorithm from $\mathcal{S}$
to $\mathcal{Q}$, then due to theorem \ref{thm:Alternating-minimization}
$(Q_{n},\,S_{n})$ converges against against points, that locally
minimize the geodesic distance between $(X,\,\mathcal{Q})$ and $(X,\,\mathcal{S})$.
Since $(X,\,\mathcal{Q})$ is $e$-flat and $(X,\,\mathcal{S})$ is
$m$-flat corollary \ref{cor:3.2} is satisfied, such that this geodesic
distance in unique and therefore $(Q_{n},\,S_{n})$ converges against
points, that globally minimize the geodesic distance between $(X,\,\mathcal{Q})$
and $(X,\,\mathcal{S})$.
\end{proof}
\begin{cor}
\label{cor:4.3}Let $(X,\,\mathcal{P})$ be an exponential family
over a partially observable measurable space $(X,\,\Sigma)$ and $(X,\,\mathcal{E})$
an absolutely continuous empirical model over $(X,\,\Sigma)$. Let
further be $(X,\,\mathcal{Q})$ an $e$-flat submanifold of $(X,\,\mathcal{P})$,
and $\boldsymbol{\sigma}$ a repeated observation over $(X,\,\Sigma)$.
Then a maximum likelihood estimation of $(X,\,\mathcal{Q})$ respective
to $\boldsymbol{\sigma}$ is given by the limit of alternating minimization
algorithm from $\mathcal{E}_{\boldsymbol{\sigma}}$ to $\mathcal{Q}$.
\end{cor}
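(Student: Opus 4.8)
The plan is to chain Theorem~\ref{thm:4.2} with Corollary~\ref{cor:4.2}: the former characterises the maximum likelihood estimation as a minimal geodesic projection, and the latter realises that projection as the limit of the alternating minimization. First I would invoke Lemma~\ref{lem:4.17} to obtain the common dually flat embedding space $(X,\,\mathcal{U},\,D_{\psi})$ that carries both $(X,\,\mathcal{P})$ and $(X,\,\mathcal{E})$ as dually flat submanifolds. Within this embedding the two objects of interest, $\mathcal{Q}$ and $\mathcal{E}_{\boldsymbol{\sigma}}$, become submanifolds of a single simply connected dually flat manifold, which is exactly the setting required by the alternating minimization theorem and its corollary.

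The second step is to verify the flatness hypotheses of Corollary~\ref{cor:4.2}. By assumption $(X,\,\mathcal{Q})$ is an $e$-flat submanifold of $(X,\,\mathcal{P})$; since $(X,\,\mathcal{P})$ is itself $e$-flat in $(X,\,\mathcal{U})$ by Lemma~\ref{lem:4.17}, the linear embeddings in the $e$-parametrisation compose and $(X,\,\mathcal{Q})$ is $e$-flat in $(X,\,\mathcal{U})$. For the other submanifold I would show that $\mathcal{E}_{\boldsymbol{\sigma}}$ is $m$-flat: fixing the repeated observation $\boldsymbol{\sigma}$ pins down the observable marginals $\delta(\boldsymbol{v}-\boldsymbol{s}_{i})$ in the mixture representation~\ref{eq:lvm:Lemma41:eq5}, while the conditional transition probabilities $p(\boldsymbol{r}\mid\boldsymbol{s}_{i})$ remain free. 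By Lemma~\ref{lem:4.16} these densities are linear in the mixing coefficients $\mu_{s}(\boldsymbol{v},\,\boldsymbol{r})$, which are precisely the expectation coordinates $\boldsymbol{\mu}_{\mathcal{E}}$ of $(X,\,\mathcal{U})$, so $\mathcal{E}_{\boldsymbol{\sigma}}$ is a linear subset in the $m$-parametrisation and hence $m$-flat.

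With both flatness conditions in hand the conclusion follows by combining the two previous results. Theorem~\ref{thm:4.2} identifies the maximum likelihood estimation of $\mathcal{Q}$ with respect to $\boldsymbol{\sigma}$ with a minimal geodesic projection of $\mathcal{E}_{\boldsymbol{\sigma}}$ to $\mathcal{Q}$, that is, with the pair of points minimizing the geodesic distance between $\mathcal{E}_{\boldsymbol{\sigma}}$ and $\mathcal{Q}$. Since $\mathcal{E}_{\boldsymbol{\sigma}}$ is $m$-flat and $\mathcal{Q}$ is $e$-flat, Corollary~\ref{cor:4.2} guarantees that the alternating minimization algorithm from $\mathcal{E}_{\boldsymbol{\sigma}}$ to $\mathcal{Q}$ converges to points that globally minimize this geodesic distance. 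The limit therefore coincides with the minimal geodesic projection, and thus with the maximum likelihood estimation.

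I expect the $m$-flatness of $\mathcal{E}_{\boldsymbol{\sigma}}$ to be the main obstacle. One must argue carefully that restricting to a fixed $\boldsymbol{\sigma}$ leaves an affine, rather than merely smooth, family in the expectation coordinates, and that this affine family is $m$-flat inside the full embedding $(X,\,\mathcal{U})$ and not only inside $(X,\,\mathcal{E})$. This hinges on the product form of the $e$- and $m$-parametrisations of $(X,\,\mathcal{U})$ established in the proof of Lemma~\ref{lem:4.17}, namely that $\boldsymbol{\mu}=(\boldsymbol{\mu}_{\mathcal{P}},\,\boldsymbol{\mu}_{\mathcal{E}})^{T}$ splits along the two factors, so that the $m$-affine structure inherited from $\mathcal{U}$ restricts compatibly to the slice $\mathcal{E}_{\boldsymbol{\sigma}}$ cut out by fixing the observable marginals.
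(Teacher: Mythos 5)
Your proposal is correct and follows essentially the same route as the paper's own proof: obtain the common dually flat embedding $(X,\,\mathcal{U},\,D_{\psi})$ from Lemma~\ref{lem:4.17}, establish that $(X,\,\mathcal{Q})$ is $e$-flat and $(X,\,\mathcal{E}_{\boldsymbol{\sigma}})$ is $m$-flat in that embedding, and conclude via Corollary~\ref{cor:4.2} together with the identification of the MLE as the minimal geodesic projection from Theorem~\ref{thm:4.2}. In fact you are more careful than the paper, which merely asserts the $m$-flatness of $\mathcal{E}_{\boldsymbol{\sigma}}$ and leaves the appeal to Theorem~\ref{thm:4.2} implicit, whereas you supply the argument via the mixture coordinates of Lemma~\ref{lem:4.16}.
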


\begin{proof}
Since $(X,\,\mathcal{P})$ is an exponential family and $(X,\,\mathcal{E})$
an absolutely continuous empirical model, there exists a common dually
flat embedding space $(X,\,\mathcal{U}_{\boldsymbol{\eta}},\,D_{\psi})$,
such that $(X,\,\mathcal{P})$ and $(X,\,\mathcal{E})$ are dually
flat submanifolds, with respect to the Bregman divergence $D_{\psi}$.
Then $(X,\,\mathcal{E}_{\boldsymbol{\sigma}})$ is $m$-flat in $(X,\,\mathcal{U}_{\boldsymbol{\eta}},\,D_{\psi})$
and by definition $(X,\,\mathcal{Q})$ is $e$-flat in $(X,\,\mathcal{P})$
and therefore also in $(X,\,\mathcal{U}_{\boldsymbol{\eta}},\,D_{\psi})$.
This allows the application of corollary \ref{cor:4.2}.
\end{proof}

\bibliographystyle{unsrt}
\bibliography{articles}

\begin{thebibliography}{1}

\bibitem{Amari1987}
Shun-ichi Amari.
\newblock {Differential geometrical theory of statistics}.
\newblock {\em Differential geometry in statistical inference}, pages 19--94,
  1987.

\bibitem{Michl2019}
Patrick Michl.
\newblock {Foundations of Structural Statistics: Topological Statistical
  Theory}.
\newblock {\em ArXiv Preprint}, 2019.

\bibitem{Michl2020}
Patrick Michl.
\newblock {Foundations of Structural Statistics: Statistical Manifolds}.
\newblock {\em ArXiv Preprint}, 2020.

\end{thebibliography}

\end{document}